\documentclass[13pt, reqno]{amsart}
\pdfoutput=1

\usepackage{array}
\usepackage{color}
\usepackage{enumerate}
\usepackage{listings}
\usepackage[colorlinks,linkcolor=black]{hyperref}

\usepackage{amscd}
\usepackage{amsthm}
\usepackage{amsmath}
\usepackage{amsfonts}
\usepackage{amssymb}
\usepackage{mathrsfs}
\usepackage{bm}

\usepackage{euscript}
\usepackage{epsfig}
\usepackage{graphicx}
\usepackage{latexsym}
\usepackage{pdfsync}
\usepackage{wasysym}
\usepackage{stmaryrd}
\usepackage{tikz-cd}
\usepackage{pgfplots}
\usepackage{mathrsfs}
\usetikzlibrary{arrows}

\addtolength{\textheight}{4mm}
\addtolength{\textwidth}{12mm}
\topmargin=4mm
\oddsidemargin=12mm

\newcommand{\rnum}[1]{\uppercase\expandafter{\romannumeral #1\relax}}

\newcommand{\foa}{\,\forall\,}

\newcommand{\R}{\mathbb{R}}
\newcommand{\Z}{\mathbb{Z}}

\newcommand{\C}{\mathbb{C}}

\newcommand{\tri}[1]{\left<{}#1\right>}

\newcommand{\pfrac}[2][]{\frac{\partial {}#1}{\partial {}#2}}

\newcommand{\vol}{\mathrm{vol}}

\newcommand{\tr}{\operatorname{tr}}

\newcommand{\diag}{\operatorname{diag}}

\newcommand{\Spec}{\operatorname{Spec}}
\renewcommand{\Re}{\operatorname{Re}}

\newcommand{\FF}{\mathcal{F}}

\newcommand{\HH}{\mathcal{H}}

\newcommand{\PP}{\mathcal{P}}
\newcommand{\II}{\mathcal{I}}

\newcommand{\EE}{\mathcal{E}}

\newcommand{\SSS}{\mathscr{S}}

\newcommand{\FFF}{\mathscr{F}}

\newcommand{\wt}{\widetilde}
\newcommand{\cl}{\overline}

\theoremstyle{theorem}
\newtheorem{thm}{Theorem}[section]
\newtheorem{prop}[thm]{Proposition}

\theoremstyle{lemma}
\newtheorem{lemma}[thm]{Lemma}

\theoremstyle{corollary}
\newtheorem{cor}[thm]{Corollary}

\theoremstyle{definition}
\newtheorem{defi}[thm]{Definition}

\theoremstyle{example}

\theoremstyle{remark}
\newtheorem{rem}[thm]{Remark}

\theoremstyle{proof}

\author{Yong Lin}
\address{Yong Lin: YMSC, Tsinghua University, Beijing 100084, China}
\email{yonglin@tsinghua.edu.cn}

\author{Shi Wan}
\address{Shi Wan: YMSC, Tsinghua University, Beijing 100084, China}
\email{wans21@mails.tsinghua.edu.cn}

\author{Haohang Zhang}
\address{Haohang Zhang: YMSC, Tsinghua University, Beijing 100084, China}
\email{zhanghh22@mails.tsinghua.edu.cn}

\title{Connection Laplacian on discrete tori with converging property}
\date{\today}

\pgfplotsset{compat=1.15}

\begin{document}

\maketitle

\begin{abstract}
	This paper presents a comprehensive analysis of the spectral properties of the connection Laplacian for both real and discrete tori. We introduce novel methods to examine these eigenvalues by employing parallel orthonormal basis in the pullback bundle on universal covering spaces. Our main results reveal that the eigenvalues of the connection Laplacian on a real torus can be expressed in terms of standard Laplacian eigenvalues, with a unique twist encapsulated in the torsion matrix. This connection is further investigated in the context of discrete tori, where we demonstrate similar results.
	
	A significant portion of the paper is dedicated to exploring the convergence properties of a family of discrete tori towards a real torus. We extend previous findings on the spectrum of the standard Laplacian to include the connection Laplacian, revealing that the rescaled eigenvalues of discrete tori converge to those of the real torus. Furthermore, our analysis of the discrete torus occurs within a broader context, where it is not constrained to being a product of cyclic groups. Additionally, we delve into the theta functions associated with these structures, providing a detailed analysis of their behavior and convergence.
	
	The paper culminates in a study of the regularized log-determinant of the connection Laplacian and the converging results of it. We derive formulae for both real and discrete tori, emphasizing their dependence on the spectral zeta function and theta functions.
	
	\noindent\textbf{Keywords:} Connection Laplacian, spectral analysis, real and discrete tori.
	
\end{abstract}

\section{Introduction}

The study of the eigenvalues of the Laplacian has long been a classical problem in geometry. The discrete Laplacian, along with related topics such as discrete Green's functions and heat kernels, has garnered considerable attention in the discrete geometry community, as evidenced by works like \cite{Y1,C1,CY1,CY2,CY3,Green1}. A notable application of this study is in the analysis of random walks on graphs \cite{Y1}.

Recent advancements have brought the connection Laplacian into focus. This operator, which acts on vector bundles of graphs, draws inspiration from the concept of connections on Riemannian manifolds. There is a growing body of work in this area \cite{Connection1,Connection2,Connection3,Connection4,Connection5,Connection6}. The connection Laplacian is a more general operator, exhibiting a range of intriguing properties not observed in the standard Laplacian \cite{Bundle1,F1}.

A pivotal question in this field concerns the behavior of a family of graphs as they converge to a manifold in a specific manner. Some research has focused on constructing graphs by randomly selecting points on a manifold. As the number of points increases, these graphs form a converging family, with certain geometric quantities approaching those of the manifold in a probabilistic sense \cite{Connection1,Connection6}.

In our paper, we explore a specific manifold, the torus, and a family of graphs known as discrete tori. These discrete tori converge to a real torus in a manner distinct from random point selection. There has been significant progress in understanding the standard Laplacian in this context \cite{Torus1,Converge1,Converge2}. We extend this research by examining the connection Laplacian, calculating its eigenvalues and their log-determinants. Notably, the discrete tori we consider are more general and may not necessarily be products of cyclic groups.

It is also important to acknowledge the work of Finski et al. \cite{Finski1,Finski2}, who have demonstrated convergence results for the eigenvalues of the connection Laplacian on tileable surfaces, including 2-dimensional tori with rational periods. Our study serves both as a specific verification of these findings and as an exploration into higher dimensions and more general settings.

Our first main theorem addresses the eigenvalues of the connection Laplacian on torus. Previous investigations, such as those presented in \cite[Theorem~5.2]{kuwabara1982spectra}, have explored the eigenvalues for a trivial line bundle over a flat torus, employing the Hodge-de Rham theorem and gauge transformations for their analysis. Our approach diverges by utilizing a parallel orthonormal basis within the pullback bundle on the universal covering space, enabling us to extend our findings to more generalized cases.

	\begin{thm}\label{thm:RT}
		For the $d$-dimensional connection Laplacian on a real torus $\R T^n=\R^n/A\Z^n,$ the eigenvalues are given by
		$$
		\lambda_{j,w}=4\pi^2|w|^2,\quad w\in \Gamma^*+\cl w_{j},\quad \cl w_{j}=-(A^T)^{-1}\left(\frac{\omega_{j}^{(k)}}{2\pi}\right)_{k=1}^n,\quad j=1,\cdots,d,
		$$
		where $\Gamma^*=(A^T)^{-1}\Z^n$ is the dual lattice of $\Gamma=A\Z^n,$ and $\omega_{j}^{(k)}$ is the argument such that $e^{\omega_{j}^{(k)}i}$ is the $j$'th eigenvalue of the torsion matrix $\sigma^{v_k},$ as defined in Definition~\ref{def:sigma}.

	\end{thm}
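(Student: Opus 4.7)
My plan is to lift the eigenvalue problem to the universal cover $\pi\colon \R^n \to \R^n/A\Z^n = \R T^n$, trivialize the pullback bundle by a parallel orthonormal frame, and thereby reduce the connection Laplacian to the scalar Laplacian acting componentwise on vector-valued quasi-periodic functions. Concretely, I would pull back the rank-$d$ Hermitian bundle $E \to \R T^n$ along $\pi$; since $\R^n$ is simply connected and the connection on $E$ is flat, $\pi^*E$ admits a global parallel orthonormal frame $\{e_1,\dots,e_d\}$. In this frame the covariant derivative reduces to the ordinary componentwise derivative, so the pulled-back connection Laplacian becomes $-\Delta$ acting diagonally on $\C^d$-valued functions, and sections of $E$ over $\R T^n$ correspond bijectively to $\C^d$-valued functions $s$ on $\R^n$ satisfying an equivariance of the form $s(x+Av_k) = (\sigma^{v_k})^{\pm 1} s(x)$ for $k=1,\dots,n$, where $\sigma^{v_k}$ is the holonomy (torsion matrix) along the $k$-th generator of $\pi_1(\R T^n)$.

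Next, since $\pi_1(\R T^n) \cong \Z^n$ is abelian and the connection is flat, the matrices $\sigma^{v_1},\dots,\sigma^{v_n}$ commute and can be simultaneously unitarily diagonalized, with joint eigenvalues $(e^{i\omega_j^{(1)}},\dots,e^{i\omega_j^{(n)}})$ for $j=1,\dots,d$. Passing to such a joint eigenbasis decouples the vector equation into $d$ scalar problems: the $j$-th component $s_j$ must satisfy the scalar quasi-periodicity $s_j(x+Av_k) = e^{\pm i\omega_j^{(k)}} s_j(x)$ on $\R^n$ and be an eigenfunction of $-\Delta$.

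For each $j$, I would solve this scalar quasi-periodic problem by expansion in plane waves $e^{2\pi i \langle w, x\rangle}$. The quasi-periodicity condition forces $2\pi\langle w, Av_k\rangle \equiv \pm\omega_j^{(k)} \pmod{2\pi}$ for every $k$, i.e., $A^T w \pm (\omega_j^{(k)}/(2\pi))_{k=1}^n \in \Z^n$; inverting $A^T$ then yields the shifted dual-lattice condition $w \in \Gamma^* + \cl w_j$, with $\cl w_j$ exactly as in the statement once the sign convention is fixed. Each such plane wave is then an eigenfunction with eigenvalue $4\pi^2|w|^2$, giving the asserted spectrum. To see that these exhaust the spectrum, I would invoke Plancherel on the fundamental domain of $A\Z^n$: the twisted exponentials form a complete orthogonal basis of the $L^2$-space of equivariant $\C^d$-valued sections, so no further eigenvalues can occur.

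The main obstacle I foresee is not analytic but notational: aligning the orientation of the holonomy $\sigma^{v_k}$ with the deck-transformation action and the Fourier phase so that the resulting shift is exactly $\cl w_j = -(A^T)^{-1}(\omega_j^{(k)}/(2\pi))_{k=1}^n$ rather than its negative. A minor secondary step is to spell out why the simultaneous unitary diagonalization of the $\sigma^{v_k}$ is harmless at the level of the Laplacian: a constant unitary change of frame commutes with $-\Delta$ acting componentwise, so it only reorganizes eigenspaces and preserves eigenvalues with multiplicity.
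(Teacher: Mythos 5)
Your proposal follows essentially the same route as the paper: lift to the universal cover $\R^n$, trivialize the pullback bundle by a parallel orthonormal frame so that the connection Laplacian becomes $-\Delta$ acting componentwise (the paper's Lemma~\ref{lem:RTeq}), encode the descent condition via the constant torsion matrices $\sigma^{v_k}$, use their commutativity (Lemma~\ref{lem:commutativity}) to simultaneously unitarily diagonalize, and solve each resulting scalar quasi-periodic problem by plane waves $e^{2\pi i\langle w,x\rangle}$ with $w\in\Gamma^*+\cl w_j$. The sign convention you flag is exactly the one the paper resolves by writing the periodic condition as $\FF(x)=\sigma^v\FF(x+v)$ and verifying $e^{2\pi i\langle v_k,w\rangle}=(\mu_j^{(k)})^{-1}$.
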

	
	This theorem demonstrates that the eigenvalues of the connection Laplacian on a torus are essentially $d$ groups of standard Laplacian eigenvalues, and the effect of connection is reflected in $w_0$. This extends the findings presented in \cite[Proposition~3.2]{kuwabara1982spectra}, broadening the scope of the original results. Eigenvalues of the standard Laplacian are discussed in Section~\ref{sec:pre}. The following analogous result is observed for the discrete torus, which extends the results in \cite{DT1} to a more general case.
	
	\begin{thm}\label{thm:DT}
		For the $d$-dimensional connection Laplacian on a discrete torus $DT^n=\Z^n/A\Z^n,$ the eigenvalues are given by
		$$
		\lambda_{j,w}=4(\sin^2(w_1\pi)+\cdots+\sin^2(w_n\pi)),\quad w\in (\Gamma^*+\cl w_j)/\Z^n,\quad \cl w_{j}=-(A^T)^{-1}\left(\frac{\omega^{(k)}_{j}}{2\pi}\right)_{k=1}^n
		$$
		for $j=1,\cdots,d,$ where $w_i$ is the $i$'th component of $w.$ Other symbols are consistent with those defined earlier.

	\end{thm}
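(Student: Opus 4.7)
The plan is to mirror the strategy behind Theorem~\ref{thm:RT}, replacing continuous Fourier analysis on $\R^n$ by discrete Fourier analysis on the finite abelian group $\Z^n/A\Z^n$, while tracking the phase twist coming from the connection holonomies.

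First I would lift to the universal cover $\Z^n\to DT^n$. Since the cover is simply connected as a graph, the pullback bundle admits a global parallel orthonormal frame in which the pulled-back connection is trivial; sections of the original bundle then correspond bijectively to vector-valued functions $s\colon\Z^n\to\C^d$ satisfying the equivariance $s(x+Av)=P(v)\,s(x)$ for $v\in\Z^n$, with $P(v_k)=\sigma^{v_k}$. Because the deck group $A\Z^n$ is abelian, the matrices $\sigma^{v_1},\ldots,\sigma^{v_n}$ commute and therefore share a common orthonormal eigenbasis $\{e_1,\ldots,e_d\}$ with $\sigma^{v_k}e_j=e^{i\omega_j^{(k)}}e_j$.

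Decomposing $s=\sum_{j=1}^d s_j\,e_j$, each scalar component obeys the twisted periodicity $s_j(x+Av_k)=e^{i\omega_j^{(k)}}s_j(x)$. The substitution $s_j(x)=e^{2\pi i\tri{\cl w_j,\,x}}g_j(x)$, with $\cl w_j$ chosen as in the theorem, absorbs the phase so that $g_j$ descends to a genuine function on $DT^n$. In the trivializing frame the connection Laplacian acts componentwise as the standard discrete Laplacian on $\Z^n$, and this phase conjugation converts the eigenvalue problem to the diagonalization of the twisted Laplacian
$$
(\Delta_{\cl w_j}g)(x)=\sum_{k=1}^n\bigl(2g(x)-e^{2\pi i(\cl w_j)_k}g(x+e_k)-e^{-2\pi i(\cl w_j)_k}g(x-e_k)\bigr)
$$
on $DT^n$. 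Evaluating on the characters $g(x)=e^{2\pi i\tri{u,\,x}}$ for $u\in\Gamma^*/\Z^n$ gives the eigenvalue $4\sum_{k=1}^n\sin^2(\pi((\cl w_j)_k+u_k))$, and setting $w=\cl w_j+u\in(\Gamma^*+\cl w_j)/\Z^n$ yields exactly the claimed formula. A count of $d\cdot|\det A|$ eigenvalues matches the dimension of the space of sections of a rank-$d$ bundle over $DT^n$, confirming completeness.

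The main technical point is the first step: constructing the parallel orthonormal frame on the graph cover, verifying that the torsion matrices $\sigma^{v_k}$ pairwise commute (so that a simultaneous eigenbasis exists, indexing the block decomposition), and checking that in this frame the connection Laplacian reduces cleanly to the componentwise standard discrete Laplacian on $\Z^n$. Once this reduction is in place, the remaining calculation is a routine discrete Fourier computation on the finite abelian group $\Z^n/A\Z^n$, entirely parallel to its continuous counterpart.
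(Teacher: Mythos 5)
Your proposal is correct and follows essentially the same route as the paper: lift to the simply connected graph cover $\Z^n$, trivialize the pullback bundle in a parallel orthonormal frame (so the connection Laplacian acts componentwise as $\Delta_{\Z^n}$), simultaneously diagonalize the commuting torsion matrices $\sigma^{v_k}$, and read off the eigenvalues from exponential characters subject to the twisted periodicity encoded by $\cl w_j$. The only cosmetic difference is that you factor out the phase $e^{2\pi i\tri{\cl w_j,\,x}}$ and recast the computation as a magnetic (twisted) Laplacian acting on genuine functions on $DT^n$, while the paper keeps the quasi-periodic scalar functions $\wt f_{j,w}$ on $\Z^n$ and verifies the periodicity condition directly; this is the same computation in different bookkeeping.
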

	
	We also extend the results presented in \cite{Converge2} to more general tori, and additionally apply these findings to the case of the connection Laplacian. In our analysis, we adopt the methodology outlined in \cite{Converge2}, while implementing refinements in certain technical aspects of the proof to extend our results to a broader class of tori. Specifically, we consider real tori of the form $\mathbb{R}^n / A\mathbb{Z}^n$ and discrete tori $\mathbb{Z}^n / M\mathbb{Z}^n$, where $A$ and $M$ are allowed to be arbitrary nonsingular matrices, diverging from the restriction to diagonal matrices imposed in \cite{Converge2}. This generalization enables our analysis to encompass a wider variety of toroidal geometries.
	
	\begin{thm}\label{thm:converge}
		Consider a converging discrete tori family $\frac{1}{u}DT_u^n\rightarrow\R T^n,$ the scaled eigenvalue set $u^2\Spec(L_{DT^n_u})$ converges to $\Spec(L_{\R T^n}),$ the eigenvalue set of real torus. Furthermore, we have
		$$
		\log {\det}^* (L_{DT^n_u})=\mu(DT^n_u)\II_n(0)d+(\dim \ker L_{\R T^n}) \log u^2+\log {\det}^*(L_{\R T^n})+o(1),\quad \text{as }u\rightarrow\infty,
		$$
		where $\mu(DT^n_u)$ counts the number of vertices in $DT^n_u,$ and
		$$
		\II_n(0)=\left(\log 2n-\int_0^\infty (e^{-2nt}(I_0(2t)^n-1)\frac{dt}{t})\right).
		$$
	
	\end{thm}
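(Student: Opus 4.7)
The plan is to prove the two assertions in turn, using the explicit eigenvalue formulas from Theorems~\ref{thm:RT} and \ref{thm:DT} together with heat-trace (theta-function) techniques adapted from \cite{Converge2}.

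For the spectral convergence, parametrize the family by $DT^n_u=\Z^n/A_u\Z^n$ with $A_u/u\to A$. Theorem~\ref{thm:DT} gives eigenvalues $4\sum_i\sin^2(w_i\pi)$ with $w$ in a shifted dual lattice. After rescaling by $u^2$, the scaled dual lattices $u\Gamma_u^*$ converge to $\Gamma^*=(A^T)^{-1}\Z^n$, and the identity $u^2\cdot 4\sin^2(\xi\pi/u)\to 4\pi^2\xi^2$ gives pointwise convergence of each rescaled eigenvalue to the corresponding real-torus eigenvalue $4\pi^2|w|^2$ from Theorem~\ref{thm:RT}. A uniform counting bound on the number of eigenvalues below any fixed threshold upgrades this to set-level convergence, and the frequency shifts $\bar w_j$ induced by the connection match across the two formulas because they depend only on $A^T$ and the torsion matrices, not on $u$.

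For the log-determinant asymptotics, I work through the spectral zeta function
\[
\zeta_L(s)=\frac{1}{\Gamma(s)}\int_0^\infty t^{s-1}(\theta_L(t)-\dim\ker L)\,dt,\qquad \log{\det}^*L=-\zeta_L'(0),
\]
where $\theta_L(t)=\tr(e^{-tL})$. The local structure of $DT^n_u$ is modeled on $\Z^n$, and by Theorem~\ref{thm:DT} the short-time heat trace of the connection Laplacian is well approximated by that of $d$ independent scalar Laplacians on $\Z^n$: for $t\ll u^2$,
\[
\theta_{L_{DT^n_u}}(t)\approx \mu(DT^n_u)\, d\cdot e^{-2nt}I_0(2t)^n,
\]
since the connection contributes only through holonomy around loops of length $\gtrsim u$, which is negligible in short time. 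This is the mechanism by which the Mellin-transform constant $\II_n(0)$ of the regularized $\Z^n$ heat kernel appears; multiplication by $\mu(DT^n_u)\,d$ accounts for the total count. The long-time contribution is handled by the substitution $\tau=u^2 t$: by the first assertion the rescaled eigenvalues converge, and standard arguments yield that the portion with $t\gtrsim u^{-2+\eps}$ produces $\log{\det}^*(L_{\R T^n})$ in the limit. The term $(\dim\ker L_{\R T^n})\log u^2$ arises from tracking the near-zero eigenvalues: each approximate zero mode of $L_{DT^n_u}$ has eigenvalue of order $u^{-2}$, so its logarithm contributes $-\log u^2$, whose sign flips in $-\zeta_L'(0)$.

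The principal obstacle is matching the two regimes with a genuinely $o(1)$ error. This requires (a) precise control of $\theta_{L_{DT^n_u}}(t)-\mu(DT^n_u)\,d\cdot e^{-2nt}I_0(2t)^n$ on the short-time side, showing it decays fast enough as $u\to\infty$ to be absorbed into the Mellin transform near $s=0$; and (b) verifying that the connection-induced frequency shifts $\bar w_j$ do not perturb the leading constant $\II_n(0)$ but only enter through the rescaled limit $\log{\det}^*(L_{\R T^n})$. Extending the arguments of \cite{Converge2} from diagonal matrices to arbitrary nonsingular $A$ further requires replacing product-of-sines identities with lattice-theoretic estimates on the shifted dual lattice $\Gamma^*+\bar w_j$, and optimizing the cutoff $t_u$ so that both the short-time Bessel-function remainder and the long-time tail contribute $o(1)$; this is where the bulk of the technical work will be concentrated.
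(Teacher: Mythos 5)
Your overall strategy (heat trace $\to$ spectral zeta $\to$ $-\zeta'(0)$, with $\mu(DT^n_u)\,d\cdot e^{-2nt}I_0(2t)^n$ as the reference term) is in the same family as the paper's, but the paper does \emph{not} use a moving cutoff $t_u$ separating short- and long-time regimes. Instead it establishes an \emph{exact} decomposition valid for each fixed $u$ (Proposition~\ref{prop:DT-log-determinant}): via a Gauss transform, $\sum_{w}\log\lambda_w = \mu(DT^n_u)\,\II_n(0) + \HH^i_u(0)$ with $\HH^i_u$ defined by subtracting $\mu(DT^n_u)e^{-2nt}I_0(2t)^n$ from $\theta^i_{DT^n_u}$ globally in $t$, not only for $t\ll u^2$. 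One then shows $\HH^i_u(0)\to-\zeta_i'(0)$ by dominated convergence on the whole half-line, using the theta convergence $\theta^i_{DT^n_u}(u^2t)\to\Theta^i_{\R T^n}(t)$ and uniform integrable bounds (Appendix~A). This avoids the regime-matching and cutoff optimization that your proposal flags as ``the bulk of the technical work.''

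The genuine gap is in your account of the $(\dim\ker L_{\R T^n})\log u^2$ term. You write that each approximate zero mode has eigenvalue $\asymp u^{-2}$, contributing $-\log u^2$ ``whose sign flips in $-\zeta'(0)$.'' This is incorrect on two counts. First, the sign: for a finite spectrum $\log\det^* L = -\zeta'(0) = \sum_{\lambda\ne0}\log\lambda$, so there is no sign flip; if such a mode were retained it would contribute $-\log u^2$, the opposite of what the theorem asserts. Second, and more seriously, when $\cl w_i=0$ but $\cl w_i(u)\ne 0$, one only knows $\cl w_i(u)=o(u^{-1})$ (from~\eqref{eq:w}), so the corresponding eigenvalue $\lambda_{\cl w_i(u)}$ is merely $o(u^{-2})$ — there is \emph{no} control on its rate of decay, and $\log\lambda_{\cl w_i(u)}$ can diverge to $-\infty$ arbitrarily fast. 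Including it in $\log\det^*$ would make the claimed $o(1)$ error false. The paper resolves this by \emph{modifying the definition} of $\log\det^*(L_{DT^n_u})$ in~\eqref{eq:logdet}, explicitly deleting $\lambda_{\cl w_i(u)}$ whenever $\cl w_i(u)=o(u^{-1})$. Once that mode is removed, the $+\log u^2$ arises from the zeta manipulation in the degenerate case (Appendix~\ref{app:B}, equation~\eqref{eq:case3}), not from tracking individual near-zero eigenvalues. Your proposal neither mentions this definitional adjustment nor gives a mechanism that survives when $\cl w_i(u)\to 0$ faster than $u^{-1}$, so as written the argument would not close.
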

	
	The converging discrete tori family is briefly introduced in Section~\ref{sec:Converging}, and explained in detail in Definition~\ref{def:converge}. By Theorem~\ref{thm:RT}, it is evident that
	$$\dim\ker L_{\R T^n}=\#\{\cl w_j\mid \cl w_j=0\}.$$
	This finding is in alignment with the result stated in \cite[Corollary~2.6]{Finski1}, which asserts that $\ker L_{\R T^n}$ is isomorphic to the space of flat sections of the vector bundle $\R^d\rightarrow E\rightarrow \R T^n.$	
	
	The structure of the paper is as follows. In Section~\ref{sec:pre}, we introduce some basic notations, tools, and results. The proofs of Theorem~\ref{thm:RT} and Theorem~\ref{thm:DT} are presented in Section~\ref{sec:RT} and Section~\ref{sec:DT}, respectively. In Sections~\ref{sec:heat_kernel} and \ref{sec:heat_kernel_CL}, we discuss results about the heat kernel that are instrumental in deriving results related to the theta function. Finally, in Section~\ref{sec:converge}, we prove Theorem~\ref{thm:converge}. Appendix provides some technical details for the Section~\ref{sec:converge}.

\section{Preliminary}\label{sec:pre}
	\subsection{Laplacian on graphs}
	For a graph $G=(V,E),$ the combinatorial Laplacian $\Delta_G$ is defined as
	$$
	\Delta_G f(x):=\sum_{y\sim x} (f(x)-f(y))=\deg(x)f(x)-\sum_{y\sim x}f(y),\quad x\in V,
	$$
	where $f:V\rightarrow \R$ is a function on the graph. The notation $y \sim x$ denotes that vertices $x$ and $y$ are adjacent, i.e., there is an edge in $E$ connecting them in the graph. The degree $\deg(x)$ of a vertex $x$ is the number of its neighbors.
	
	The inner product of functions $f$ and $g$ is defined to be 
	$$
	\tri{f,g}_G:=\sum_{x\in V} f(x)g(x).
	$$
	
	In this paper, we mainly consider regular graphs. That is, the degree is constant across all vertices. Consequently, $\Delta_G$ is self-adjoint and has non-negative real eigenvalues.
	

%
	
	\subsection{Discretization of torus}
	The standard grid graph is denoted as $\Z^n=(\Z^n,E),$ where we use the notation $\Z^n$ to represent both the graph and the vertex set. The edge set $E$ satisfies that two vertices $x$ and $y$ are adjacent if and only if there exists an $i=1,\cdots,n$ such that $y = x \pm e_i$, where $e_i$ is the $i$'th direction vector with components being zeros except for the $i$'th one being $1$.
	
	A lattice is the set $A\Z^n,$ where $A=[v_1,\cdots,v_n]\in M_n(\R).$ A real torus $\R T^n$ can be expressed as $\R^n/A\Z^n,$ and the discrete torus is the graph $D T^n:=\Z^n/A\Z^n.$ However, for a discrete torus, it is required that $A\in M_n(\Z).$ From elementary geometry, we know that $|\det A|$ represents the volume of the real torus $\mu(\R T^n)$ as well as the size of the discrete torus $\mu(D T^n)$. Figure~\ref{fig:DT2} and Figure~\ref{fig:RT2} are illustrations when $n=2$ and $A=\begin{bmatrix}
		1&3\\
		4&-1
	\end{bmatrix}.$
	
	\begin{figure}[htbp]
	\centering
	\begin{minipage}{.45\textwidth}
		\centering
		\definecolor{qqqqff}{rgb}{0.3,0.3,1}
		\definecolor{qqqqcc}{rgb}{0.5,0.5,0.8}
		\begin{tikzpicture}[line cap=round,line join=round,>=triangle 45,x=1cm,y=1cm,scale=0.7]
			\selectcolormodel{gray}
			\begin{axis}[
				x=1cm,y=1cm,
				axis lines=middle,
				ymajorgrids=true,
				xmajorgrids=true,
				xmin=-1.5,
				xmax=5.5,
				ymin=-2.2,
				ymax=5.2,
				xtick={-3,-2,...,6},
				ytick={-2,-1,...,5},]
				\clip(-1.5,-2.2) rectangle (5.5,5.2);
				\draw [line width=2pt,color=qqqqcc,domain=-1.5:5.5] plot(\x,{(-0--4*\x)/1});
				\draw [line width=2pt,color=qqqqcc,domain=-1.5:5.5] plot(\x,{(-0-1*\x)/3});
				\draw [line width=2pt,color=qqqqcc,domain=-1.5:5.5] plot(\x,{(-13--4*\x)/1});
				\draw [line width=2pt,color=qqqqcc,domain=-1.5:5.5] plot(\x,{(-13--1*\x)/-3});
				\draw [line width=2pt,color=qqqqcc,domain=-1.5:5.5] plot(\x,{(--26-1*\x)/3});
				\draw [line width=2pt,color=qqqqcc,domain=-1.5:5.5] plot(\x,{(--26-4*\x)/-1});
				\draw [line width=2pt,color=qqqqcc,domain=-1.5:5.5] plot(\x,{(--13--4*\x)/1});
				\draw [line width=2pt,color=qqqqcc,domain=-1.5:5.5] plot(\x,{(-13-1*\x)/3});
				\draw [line width=2pt,color=qqqqcc,domain=-1.5:5.5] plot(\x,{(-26-4*\x)/-1});
				\draw [line width=2pt,color=qqqqcc,domain=-1.5:5.5] plot(\x,{(-39--4*\x)/1});
				\draw [line width=2pt] (0,0)-- (1,0);
				\draw [line width=2pt] (1,0)-- (1,1);
				\draw [line width=2pt] (1,1)-- (1,2);
				\draw [line width=2pt] (1,2)-- (1,3);
				\draw [line width=2pt] (1,3)-- (1,4);
				\draw [line width=2pt] (1,3)-- (2,3);
				\draw [line width=2pt] (2,3)-- (3,3);
				\draw [line width=2pt] (3,3)-- (4,3);
				\draw [line width=2pt] (3,3)-- (3,2);
				\draw [line width=2pt] (3,2)-- (3,1);
				\draw [line width=2pt] (3,1)-- (3,0);
				\draw [line width=2pt] (3,0)-- (3,-1);
				\draw [line width=2pt] (3,0)-- (2,0);
				\draw [line width=2pt] (2,0)-- (1,0);
				\draw [line width=2pt] (1,1)-- (2,1);
				\draw [line width=2pt] (2,1)-- (3,1);
				\draw [line width=2pt] (2,0)-- (2,1);
				\draw [line width=2pt] (2,1)-- (2,2);
				\draw [line width=2pt] (2,2)-- (3,2);
				\draw [line width=2pt] (2,2)-- (2,3);
				\draw [line width=2pt] (2,3)-- (2,3.66666);
				\draw [line width=2pt] (2,2)-- (1,2);
				\draw [line width=2pt] (1,2)-- (0.5,2);
				\draw [line width=2pt] (3,1)-- (3.5,1);
				\draw [line width=2pt] (3,2)-- (3.75,2);
				\draw [line width=2pt] (2,0)-- (2,-0.666666);
				\draw [line width=2pt] (1,0)-- (1.,-0.333333);
				\draw [line width=2pt] (3,3)-- (3.,3.333333);
				\draw [line width=2pt] (3,0)-- (3.25,0);
				\draw [line width=2pt] (1,1)-- (0.25,1);
				\draw [line width=2pt] (0.75,3)-- (1,3);
				\begin{scriptsize}
					\draw [fill=qqqqff] (3,-1) circle (2.5pt);
					\draw [fill=qqqqff] (4,3) circle (2.5pt);
					\draw [fill=qqqqff] (0,0) circle (2pt);
					\draw [fill=qqqqff] (1,0) circle (2.5pt);
					\draw [fill=qqqqff] (1,1) circle (2.5pt);
					\draw [fill=qqqqff] (1,2) circle (2.5pt);
					\draw [fill=qqqqff] (1,3) circle (2.5pt);
					\draw [fill=qqqqff] (1,4) circle (2.5pt);
					\draw [fill=qqqqff] (2,3) circle (2.5pt);
					\draw [fill=qqqqff] (3,3) circle (2.5pt);
					\draw [fill=qqqqff] (3,2) circle (2.5pt);
					\draw [fill=qqqqff] (3,1) circle (2.5pt);
					\draw [fill=qqqqff] (3,0) circle (2.5pt);
					\draw [fill=qqqqff] (3,-1) circle (2pt);
					\draw [fill=qqqqff] (2,0) circle (2.5pt);
					\draw [fill=qqqqff] (2,1) circle (2.5pt);
					\draw [fill=qqqqff] (2,2) circle (2.5pt);
				\end{scriptsize}
			\end{axis}
		\end{tikzpicture}
		\caption{Discrete torus}\label{fig:DT2}
	\end{minipage}
	\quad\quad
	\begin{minipage}{.45\textwidth}
		\centering
		\definecolor{zzttqq}{rgb}{0.6,0.2,0}
		\definecolor{qqqqff}{rgb}{0.5,0.5,1}
		\definecolor{qqqqcc}{rgb}{0.5,0.5,1}
			\begin{tikzpicture}[line cap=round,line join=round,>=triangle 45,x=1cm,y=1cm,scale=0.7]
				\selectcolormodel{gray}
				\begin{axis}[
					x=1cm,y=1cm,
					axis lines=middle,
					ymajorgrids=true,
					xmajorgrids=true,
					xmin=-1.5,
					xmax=5.5,
					ymin=-2.2,
					ymax=5.2,
					xtick={-3,-2,...,6},
					ytick={-2,-1,...,5},]
					\clip(-1.5,-2.2) rectangle (5.5,5.2);
					\fill[line width=2pt,color=zzttqq,fill=zzttqq,fill opacity=0.10000000149011612] (1,4) -- (0,0) -- (3,-1) -- (4,3) -- cycle;
					\draw [line width=2pt,color=qqqqcc,domain=-1.5:5.5] plot(\x,{(-0--4*\x)/1});
					\draw [line width=2pt,color=qqqqcc,domain=-1.5:5.5] plot(\x,{(-0-1*\x)/3});
					\draw [line width=2pt,color=qqqqcc,domain=-1.5:5.5] plot(\x,{(-13--4*\x)/1});
					\draw [line width=2pt,color=qqqqcc,domain=-1.5:5.5] plot(\x,{(-13--1*\x)/-3});
					\draw [line width=2pt,color=qqqqcc,domain=-1.5:5.5] plot(\x,{(--26-1*\x)/3});
					\draw [line width=2pt,color=qqqqcc,domain=-1.5:5.5] plot(\x,{(--26-4*\x)/-1});
					\draw [line width=2pt,color=qqqqcc,domain=-1.5:5.5] plot(\x,{(--13--4*\x)/1});
					\draw [line width=2pt,color=qqqqcc,domain=-1.5:5.5] plot(\x,{(-13-1*\x)/3});
					\draw [line width=2pt,color=qqqqcc,domain=-1.5:5.5] plot(\x,{(-26-4*\x)/-1});
					\draw [line width=2pt,color=qqqqcc,domain=-1.5:5.5] plot(\x,{(-39--4*\x)/1});
				\draw [line width=2pt,color=zzttqq] (1,4)-- (0,0);
				\draw [line width=2pt,color=zzttqq] (0,0)-- (3,-1);
				\draw [line width=2pt,color=zzttqq] (3,-1)-- (4,3);
				\draw [line width=2pt,color=zzttqq] (4,3)-- (1,4);
				\begin{scriptsize}
					\draw [fill=qqqqff] (3,-1) circle (2.5pt);
					\draw [fill=qqqqff] (4,3) circle (2.5pt);
					\draw [fill=qqqqff] (0,0) circle (2pt);
					\draw [fill=qqqqff] (1,4) circle (2.5pt);
					\draw [fill=qqqqff] (3,-1) circle (2pt);
					\draw[color=black] (2,1.5) node {$\R T^2$};
				\end{scriptsize}
			\end{axis}
		\end{tikzpicture}
		\caption{Real torus}\label{fig:RT2}
	\end{minipage}
	\end{figure}
	
	To simplify our discussion, let us review the concept of the dual lattice. For a lattice $\Gamma=A\Z^n$, the dual lattice is defined as
	$$\Gamma^*:=\{x\in \R^n\mid \tri{x,y}\in \Z,\foa y\in \Gamma\}.$$
	If the inner product is the standard one, then 
	$$\Gamma^*=(A\Z^n)^*=(A^T)^{-1}\Z^n.$$
	Using the concept of the dual lattice, we can conveniently express the eigenpairs of the standard Laplacian on tori. Here, $\Delta_{\mathbb{R}T^n}$ is induced by $\Delta_{\mathbb{R}^n}=-\sum_{k=1}^n\pfrac[^2]{x_k^2}$.
	
	\begin{prop}\label{prop:RT_eigen}
		For a real torus $\R T^n=\R^n/A\Z^n,$ all the eigenpairs of $\Delta_{\R T^n}$ are given by
		$$
		f_w(x)=e^{2\pi \tri{x,w}i},\quad \lambda_w=4\pi^2|w|^2,\quad \foa w\in \Gamma^*.
		$$
	
	\end{prop}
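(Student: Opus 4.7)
The plan is to proceed in two stages: first exhibit the candidate eigenfunctions and verify that they satisfy the stated identities, and then establish completeness so that these account for the entire spectrum.

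For the first stage, I would check that $f_w(x)=e^{2\pi\tri{x,w}i}$ descends to a well-defined function on $\R T^n=\R^n/A\Z^n$ precisely when $w\in\Gamma^*$. Indeed, invariance under translation by $v\in\Gamma=A\Z^n$ requires $e^{2\pi\tri{v,w}i}=1$, i.e., $\tri{v,w}\in\Z$ for every $v\in\Gamma$, which is exactly the defining condition of the dual lattice, yielding $w\in(A^T)^{-1}\Z^n$. A direct computation then gives $\pfrac[f_w]{x_k}=2\pi i w_k f_w$, hence
$$
\Delta_{\R T^n}f_w=-\sum_{k=1}^n\pfrac[^2 f_w]{x_k^2}=4\pi^2|w|^2 f_w,
$$
which gives the claimed eigenpair.

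For the second stage, I would invoke classical Fourier analysis on the torus. The family $\{f_w\}_{w\in\Gamma^*}$ is mutually orthogonal in $L^2(\R T^n)$, since for $w\ne w'$ the integral $\int_{\R T^n}e^{2\pi\tri{x,w-w'}i}dx$ vanishes (the integrand is a non-trivial character on the compact abelian group $\R T^n$), while for $w=w'$ it equals $\mu(\R T^n)$. Completeness follows either by pulling the system back through the linear isomorphism $A\colon \R^n/\Z^n\to\R T^n$ and applying the standard Fourier inversion theorem, or by verifying the hypotheses of the Stone--Weierstrass theorem on the $*$-algebra spanned by the $f_w$'s (which contains the constants, separates points of $\R T^n$, and is closed under complex conjugation). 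Either route yields density in $C(\R T^n)$ and hence in $L^2(\R T^n)$. Self-adjointness of $\Delta_{\R T^n}$ with compact resolvent then forces the listed $\lambda_w$ to exhaust the spectrum.

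The main obstacle is essentially bookkeeping rather than genuine difficulty: one has to track the affine change of coordinates induced by $A$ carefully so that the duality $\Gamma^*=(A^T)^{-1}\Z^n$ is reflected correctly both in the periodicity check and in the orthogonality relations over a fundamental domain for $\Gamma$. Once the Jacobian and lattice identifications are aligned, the proposition reduces to classical Fourier series on $\R^n/\Z^n$.
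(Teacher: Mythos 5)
Your proof is correct and follows essentially the same approach as the paper: verify well-definedness under $\Gamma$-translations, compute $\Delta_{\R T^n} f_w = 4\pi^2|w|^2 f_w$ directly, and invoke Stone--Weierstrass for completeness. The extra detail you give about orthogonality and the alternative Fourier-inversion route is a welcome elaboration but does not change the underlying argument.
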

	
	\begin{proof}
		Firstly it's well defined on the torus because
		$$
		f_w(x+v)=f_w(x)e^{2\pi \tri{v,w}i}=f_w(x),\quad \foa v\in \Gamma.
		$$
		By direct calculation, we have
		$$
		\Delta_{\R T^n} f_w(x)=4\pi^2|w|^2 f_w(x),
		$$
		so the eigenvalues set is $\{4\pi^2|w|^2\mid w\in\Gamma^*\}.$ These are all the eigenvalues because the function set $\{f_w\}$ is dense in $C(\R T^n,\C)$ by Stone-Weierstrass theorem.
		
	\end{proof}

	\begin{rem}
		Although $f_w$ is a complex function, it is important to note that when $w\in \Gamma^*,$ then $-w$ is also in $\Gamma^*$, implying that $f_{-w}=\cl f_w$ is also an eigenfunction with the same eigenvalue. Consequently, both the real and imaginary parts of $f_w$ are eigenfunctions. This is attributable to the fact that the eigenvalue is real. Therefore, $f_w$ and $\cl f_w$ can be replaced by their real and imaginary parts, respectively. Similar properties will be utilized in later discussions.
		
	\end{rem}	
	
	For discrete torus $DT^n=\Z^n/A\Z^n,$ the approach is essentially the same. However, it is important to note that we should have a finite number of eigenvalues now, which is exactly $\mu(DT^n).$
	
	\begin{prop}[{\cite[Lemma~3.1]{Torus1}}]
		All of the eigenpairs of $\Delta_{DT^n}$ are given by
		$$
		f_w(x)=e^{2\pi \tri{x,w}i},\quad \lambda_w=4(\sin^2(w_1\pi)+\cdots+\sin^2(w_n\pi)),\quad \foa w\in \Gamma^*/\Z^n,
		$$
		where $w_k$ is the $k$'th component of $w$.\label{prop:DT_eigen}
	
	\end{prop}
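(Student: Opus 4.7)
The plan is to follow the blueprint of Proposition~\ref{prop:RT_eigen} but adapted to the finite combinatorial setting, in three conceptual steps: well-definedness, direct computation of the eigenvalue, and a counting-plus-independence argument for completeness.

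First I would verify that $f_w$ descends to $DT^n$ and that the parametrization by $w \in \Gamma^*/\Z^n$ is the correct one. Since $w \in \Gamma^* = (A^T)^{-1}\Z^n$, for any $v \in \Gamma = A\Z^n$ we have $\tri{v,w} \in \Z$, so $f_w(x+v) = f_w(x)$ and $f_w$ is a function on the quotient. Likewise, if $w' - w \in \Z^n$, then $e^{2\pi i \tri{x,\, w'-w}} = 1$ for every $x \in \Z^n$, so $f_w$ only depends on the class $[w] \in \Gamma^*/\Z^n$.

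Next I would compute $\Delta_{DT^n} f_w$ directly. Because $DT^n$ is the quotient of the standard grid $\Z^n$ and each vertex has the $2n$ neighbors $x \pm e_k$, pulling the computation back to $\Z^n$ gives
$$
\Delta_{DT^n} f_w(x) = \sum_{k=1}^n \bigl(2 f_w(x) - f_w(x+e_k) - f_w(x-e_k)\bigr) = \sum_{k=1}^n \bigl(2 - 2\cos(2\pi w_k)\bigr) f_w(x),
$$
and the half-angle identity $1 - \cos(2\pi w_k) = 2\sin^2(\pi w_k)$ yields the claimed eigenvalue $\lambda_w = 4\sum_{k=1}^n \sin^2(\pi w_k)$.

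The main obstacle will be completeness: showing that the $f_w$ exhaust all eigenfunctions. For this I would argue by dimension. Since $A \in M_n(\Z)$, one has $\Z^n \subset \Gamma^*$ with index $[\Gamma^*:\Z^n] = |\det A| = \mu(DT^n)$, so the family $\{f_w\}_{w \in \Gamma^*/\Z^n}$ has exactly the same cardinality as the dimension of the function space on $DT^n$. It then suffices to establish linear independence, which I would do by recognizing that the assignment $w \mapsto f_w$ identifies $\Gamma^*/\Z^n$ with the character group $\wh{DT^n}$ of the finite abelian group $DT^n = \Z^n/A\Z^n$; the classical orthogonality of distinct characters then yields an orthogonal (hence linearly independent) basis and completes the proof.
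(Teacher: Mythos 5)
Your proof is correct and follows the natural route: well-definedness from $\Gamma^*$ pairing integrally with $\Gamma$ and from $x\in\Z^n$, then the direct eigenvalue computation via $f_w(x\pm e_k)=e^{\pm 2\pi i w_k}f_w(x)$ and the half-angle identity, then completeness via $[\Gamma^*:\Z^n]=|\det A|=\mu(DT^n)$ together with orthogonality of distinct characters of the finite abelian group $\Z^n/A\Z^n$. The paper does not supply its own proof (it cites \cite[Lemma~3.1]{Torus1}), but your argument is the standard one and is the exact finite-dimensional analogue of the paper's proof of Proposition~\ref{prop:RT_eigen}: where the continuous case invokes Stone--Weierstrass to get density of the exponentials, you correctly replace that with a dimension count plus character orthogonality, which is the right substitute in the finite setting where one must produce exactly $\mu(DT^n)$ independent eigenfunctions. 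One small point worth making explicit is that the identification $\Gamma^*/\Z^n\cong\widehat{DT^n}$ uses that a character $x\mapsto e^{2\pi i\tri{x,w}}$ of $\Z^n$ descends to $\Z^n/A\Z^n$ precisely when $A^Tw\in\Z^n$, i.e.\ $w\in\Gamma^*$; once stated, the orthogonality relation you invoke is exactly the one the paper later records in the proof of Theorem~\ref{thm:DPSF}.
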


%
	
	\subsection{Converging discrete tori family}\label{sec:Converging}
	To examine the spectral differences between real and discrete tori, it is essential to consider a family of discrete tori converging to the real torus in some sense. One approach is to construct a family of matrices $\{M_u\}_{u=1}^\infty$ such that $\frac{M_u}{u}\rightarrow A$ as $u\rightarrow\infty$, i.e., $\lim\limits_{u\rightarrow\infty}\frac{(M_u)_{ij}}{u}=a_{ij}$ for all $i,j=1,\cdots,n.$ We then define $DT_u^n:=\Z^n/M_u \Z^n,$ representing a series of discrete tori that increase in size. Consequently, we can express $\frac{1}{u}DT_u^n\rightarrow \R T^n=\R^n/A\Z^n.$ The focus then shifts to studying the convergence properties of $u^2\Spec(L_{DT^n_u}),$ the scaled eigenvalue set of $DT^n_u.$
	
	It is straightforward to observe that for standard Laplacian $\Delta_{DT^n_u}$ and $\Delta_{\R T^n},$ the scaled eigenvalue set $u^2\Spec(\Delta_{DT^n_u})$ converges to $\Spec(\Delta_{\R T^n})$ pointwise. This can be considered a simpler version of Proposition~\ref{prop:eigenvalueConverge}. The works in \cite{Converge2, Converge1} have demonstrated the convergence of the log-determinant when both $M_u$ and $A$ are diagonal matrices. In \cite{Finski1, Finski2}, results regarding the connection Laplacian are presented for the case when $M_u = Au$ and $n = 2$. This specific scenario corresponds to the discrete tori being a square partition of the real torus, which is then subdivided increasingly finer. 	
	
	Figure~\ref{fig:DT2u} and Figure~\ref{fig:DT2u-scaled} provide illustrations for the case when $M_u = Au$ and $u = 2$, with the matrix $A$ given by
	$
	A = \begin{bmatrix}
		1 & 3 \\
		4 & -1
	\end{bmatrix},
	$
	consistent with the previous illustration. Thus, Figure~\ref{fig:DT2u} represents a larger discrete torus compared to Figure~\ref{fig:DT2}, and Figure~\ref{fig:DT2u-scaled} is the scaled version of it. As $u \rightarrow \infty$, the subdivision becomes finer, converging to the real torus $\mathbb{R}T^2$ as shown in Figure~\ref{fig:RT2}. In the general case, during the process of $\frac{1}{u}DT^n_u \rightarrow \mathbb{R}T^n_u$, some perturbation of $M_u$ is allowed for the discrete torus.

	\begin{figure}[htbp]
		\centering
		\begin{minipage}{.45\textwidth}
			\centering
			\definecolor{uuuuuu}{rgb}{0.26666666666666666,0.26666666666666666,0.26666666666666666}
			\definecolor{qqqqff}{rgb}{0.5,0.5,1}
			\definecolor{ududff}{rgb}{0.30196078431372547,0.30196078431372547,1}
			\definecolor{xdxdff}{rgb}{0.49019607843137253,0.49019607843137253,1}
			\begin{tikzpicture}[line cap=round,line join=round,>=triangle 45,x=1cm,y=1cm,scale=0.4]
				\selectcolormodel{gray}
				\begin{axis}[
					x=1cm,y=1cm,
					axis lines=middle,
					ymajorgrids=true,
					xmajorgrids=true,
					xmin=-1.5,
					xmax=9.5,
					ymin=-3.5,
					ymax=9.5,
					xtick={-1,0,...,9},
					ytick={-3,-2,...,9},]
					\clip(-10,-3.5) rectangle (9.5,9.5);
					\draw [line width=2pt,color=qqqqff,domain=-1.5:9.5] plot(\x,{(-0--8*\x)/2});
					\draw [line width=2pt,color=qqqqff,domain=-1.5:9.5] plot(\x,{(-0-2*\x)/6});
					\draw [line width=2pt,color=qqqqff,domain=-1.5:9.5] plot(\x,{(-52--8*\x)/2});
					\draw [line width=2pt,color=qqqqff,domain=-1.5:9.5] plot(\x,{(-52--2*\x)/-6});
					\draw [line width=2pt] (0,0)-- (6.5,0);
					\draw [line width=2pt] (2,8)-- (2.0,-0.666);
					\draw [line width=2pt] (3,7.6666)-- (3,-1);
					\draw [line width=2pt] (4,-1.3333)-- (4.0,7.3333);
					\draw [line width=2pt] (5,-1.6666)-- (5,7);
					\draw [line width=2pt] (6,6.6666)-- (6,-2);
					\draw [line width=2pt] (7,6.3333)-- (7,2);
					\draw [line width=2pt] (5,7)-- (1.75,7.0);
					\draw [line width=2pt] (1.5,6)-- (8,6);
					\draw [line width=2pt] (7.75,5.0)-- (1.25,5.0);
					\draw [line width=2pt] (1,4)-- (7.5,4);
					\draw [line width=2pt] (7.25,3.0)-- (0.75,3);
					\draw [line width=2pt] (0.5,2)-- (7,2);
					\draw [line width=2pt] (6.75,1)-- (0.25,1);
					\draw [line width=2pt] (1.,-0.3333)-- (1,4);
					\draw [line width=2pt] (6.25,-1)-- (3,-1);
				\end{axis}
			\end{tikzpicture}
			\caption{$DT^2_u$}\label{fig:DT2u}
		\end{minipage}
		\quad\quad
		\begin{minipage}{.45\textwidth}
			\centering
			\definecolor{xdxdff}{rgb}{0.5,0.5,1}
			\definecolor{qqqqff}{rgb}{0.5,0.5,1}
			\definecolor{ududff}{rgb}{0.3,0.3,1}
			\definecolor{uuuuuu}{rgb}{0.26666666666666666,0.26666666666666666,0.26666666666666666}
			\begin{tikzpicture}[line cap=round,line join=round,>=triangle 45,x=1cm,y=1cm,scale=0.8]
				\selectcolormodel{gray}
				\begin{axis}[
					x=1cm,y=1cm,
					axis lines=middle,
					ymajorgrids=true,
					xmajorgrids=true,
					xmin=-1.5,
					xmax=5.5,
					ymin=-1.5,
					ymax=5.5,
					xtick={-1,0,...,5},
					ytick={-1,0,...,5},]
					\clip(-1.5,-1.5) rectangle (5.5,5.5);
					\draw [line width=2pt,color=qqqqff,domain=-1.5:5.5] plot(\x,{(-0--4*\x)/1});
					\draw [line width=2pt,color=qqqqff,domain=-1.5:5.5] plot(\x,{(-0-1*\x)/3});
					\draw [line width=2pt,color=qqqqff,domain=-1.5:5.5] plot(\x,{(-13--4*\x)/1});
					\draw [line width=2pt,color=qqqqff,domain=-1.5:5.5] plot(\x,{(-13--1*\x)/-3});
					\draw [line width=2pt] (0,0)-- (3.25,0);
					\draw [line width=2pt] (3,-1)-- (2.999,3.333);
					\draw [line width=2pt] (3.5,3.167)-- (3.5,1);
					\draw [line width=2pt] (2.5,3.5)-- (2.5,-0.833);
					\draw [line width=2pt] (2.0,-0.666)-- (1.999,3.666);
					\draw [line width=2pt] (1.5,3.833)-- (1.5,-0.5);
					\draw [line width=2pt] (1.0,-0.3333)-- (1,4);
					\draw [line width=2pt] (0.5,2)-- (0.5,-0.167);
					\draw [line width=2pt] (4,3)-- (0.75,3.0);
					\draw [line width=2pt] (2.5,3.5)-- (0.875,3.5);
					\draw [line width=2pt] (0.625,2.5)-- (3.875,2.5);
					\draw [line width=2pt] (3.75,1.999)-- (0.5,2);
					\draw [line width=2pt] (0.375,1.5)-- (3.625,1.5);
					\draw [line width=2pt] (3.5,1)-- (0.25,1.0);
					\draw [line width=2pt] (0.125,0.5)-- (3.375,0.5);
					\draw [line width=2pt] (3.125,-0.5)-- (1.5,-0.5);
				\end{axis}
			\end{tikzpicture}
			\caption{$\frac{1}{u}DT^2_u$}\label{fig:DT2u-scaled}
		\end{minipage}
	\end{figure}

\section{Connection Laplacian on real torus}\label{sec:RT}
\subsection{Connection of vector bundles on real torus}
	To analyze the spectrum of the connection Laplacian, we begin with the continuous case. Consider a $d$-dimensional vector bundle
	$$
	\R^d\cong F\rightarrow E\rightarrow M=\R T^n,
	$$
	equipped with a smooth positive definite quadratic form $h^F_x,$ serving as a metric for each fiber $F_x$. The inner product between two smooth vector fields (or sections) $U_1, U_2 \in \Gamma(\mathbb{R}T^n, E)$ is defined as
	$$
	\tri{U_1,U_2}=\int_{\R T^n} h^F_x(U_1(x),U_2(x))d\vol_g.
	$$
	
	The connection on the vector bundle is an $\R$-linear map $\nabla:\Gamma(\mathbb\R T^n,E)\rightarrow\Gamma(\R T^n,E\otimes T^*M),$ such that the product rule
	$$
	\nabla(fV)=df\otimes V+f\nabla V
	$$
	holds for all smooth functions $f$ on $\R T^n$ and all smooth vector fields $V$ of $E.$ We may also express the connection as
	$$
	(\nabla V)(X)=\nabla_X V\in F_x,\quad \foa x\in \R T^n,\quad X\in T_xM,\quad V\in \Gamma(\R T^n,E), 
	$$
	and then $\nabla$ is tensorial in $X$ and acts as a derivation on $V.$
	
	Let $\gamma:[0,1]\rightarrow \R T^n$ be a smooth path in $\R T^n.$ A vector field $V$ along $\gamma$ is said to be parallel if 
	$$\nabla_{\dot\gamma(t)}V\equiv 0,\quad \foa t\in [0,1].$$
	By solving the associated ordinary differential equation, we can define the parallel transport 
	$$\PP_{\gamma(t),\gamma(0)}^{\gamma}:F_{\gamma(0)}\rightarrow F_{\gamma(t)},$$ 
	such that $\PP_{\gamma(t),\gamma(0)}^{\gamma}v$ is parallel along $\gamma$ with the initial vector being $v$ at $\gamma(0)$. This parallel transport is a linear isomorphism.
	
	To investigate further, we need to consider additional properties of the vector bundle.
	\begin{defi}
		The connection on a vector bundle $F\rightarrow E\rightarrow M$ can possess the following properties:
		
		\begin{itemize}
			\item The connection is \emph{flat} if the parallel transport along any loop in $M$ depends only on the homotopy class of the loop. It is equivalent to stating that the curvature of the connection is zero. For more details about the curvature of connections, see \cite{BundleConnection}.

			\item The connection is \emph{unitary} if it preserves the metric on each fiber, i.e., the parallel transport along any path in $M$ is a unitary transformation with respect to the quadratic form $h^F$.
		\end{itemize}
		
		A \emph{flat and unitary vector bundle} over a smooth manifold $M$ is a vector bundle $F \rightarrow E \rightarrow M$ equipped with a connection that is both flat and unitary.
		
	\end{defi}	
	
	We always consider the flat and unitary vector bundle, and take the flat metric over $\R T^n$ in later discussion.

\subsection{Connection Laplacian on real torus}
	The connection Laplacian on the vector bundle is defined as 
	$$L_{\R T^n}=(\nabla^*)\nabla,$$
	where $\nabla^*$ is the formal adjoint of $\nabla$ under the $L^2$ inner product. Owing to our preceding assumptions, we can derive that
	$$L_{\R T^n} V=-\sum_{i=1}^n\nabla_{e_i}\nabla_{e_i}V\in F_x,$$
	for any orthonormal basis $\{e_i\}_{i=1}^n$ of $T_xM.$ A general expression within a local coordinate system is detailed in \cite[Proposition~2.2]{kuwabara1982spectra}.
	
	At any point $x \in \mathbb{R}T^n$, we can assign a vector $E_0(x) \in F_x$. For any $y\in U_x,$ where $U_x$ is a sufficiently small neighborhood, we can select any smooth path $\gamma_{xy}\subset U_x$ connecting $x$ and $y.$ We then define $E_0(y)$ as the parallel transport of $E_0(x)$ along $\gamma_{xy}.$ This is well-defined due to the flatness of the connection and $\pi_1(U_x)$ is trivial. Thus we can always take an arbitrary orthonormal basis of $F_x$ and perform parallel transport locally in $U_x$. Since the vector bundle is unitary, the parallel vector fields remain orthonormal basis for each fiber. Normally, global parallel transportation is not feasible due to the non-simply connected nature of the torus.
	
	However, by pulling back the vector bundle to the universal covering space $\R^n \xrightarrow{\pi} \mathbb{R}T^n$, which is simply connected, we can find parallel vector fields $E_1, \cdots, E_d$ over $\R^n.$ These sections form an orthonormal basis for each fiber of the pullback bundle. Consequently, every smooth vector field $\overline{V}$ on $\mathbb{R}^n$ can be decomposed into $\overline{V} = f_1 E_1 + \cdots + f_d E_d$, where each $f_i: \mathbb{R}^n \rightarrow \mathbb{R}$ is a smooth real function.
		
	\begin{lemma}
		For any smooth vector field $V\in \Gamma(\R T^n,E),$ $L_{\R T^n}V=\lambda V$ if and only if $\Delta_{\R^n} f_i=\lambda f_i,$ $\foa i=1,\cdots,d.$ Here $f_i:\R^n\rightarrow \R$ satisfies that $\cl V=\sum_{i=1}^d f_iE_i$ is the lift of $V.$ \label{lem:RTeq}
		
	\end{lemma}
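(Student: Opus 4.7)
The plan is to transfer the eigenvalue equation from $\R T^n$ to its universal cover $\R^n$, where everything becomes concrete because the parallel frame $E_1,\dots,E_d$ trivializes both the bundle and the connection. First I would record that the covering projection $\pi:\R^n\to\R T^n$ is a local isometry with respect to the flat metric, and that the pullback connection $\pi^*\nabla$ on $\pi^*E$ is itself flat and unitary. Consequently the formula $L_{\R T^n}V=-\sum_{i=1}^n\nabla_{e_i}\nabla_{e_i}V$ lifts verbatim to $\pi^*L_{\R T^n}=-\sum_{k=1}^n\nabla_{e_k}\nabla_{e_k}$ acting on sections of $\pi^*E$, where $\{e_k\}$ is the standard orthonormal frame of $T\R^n$. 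In particular, $L_{\R T^n}V=\lambda V$ on $\R T^n$ if and only if the same equation holds for $\cl V$ on $\R^n$, since eigenfunctions lift to eigenfunctions and sections on $\R T^n$ correspond precisely to $\Gamma$-equivariant sections on $\R^n$.

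Next I would compute $\pi^*L_{\R T^n}\cl V$ directly in the parallel frame. Because each $E_i$ is parallel, $\nabla_{e_k}E_i=0$ for all $k,i$, so the Leibniz rule gives
\[
\nabla_{e_k}\cl V=\nabla_{e_k}\Bigl(\sum_{i=1}^d f_i E_i\Bigr)=\sum_{i=1}^d\pfrac[f_i]{x_k}\,E_i,
\]
and iterating once more yields
\[
\nabla_{e_k}\nabla_{e_k}\cl V=\sum_{i=1}^d\ppfrac[f_i]{x_k}{x_k}\,E_i.
\]
Summing over $k$ and negating, $\pi^*L_{\R T^n}\cl V=\sum_{i=1}^d(\Delta_{\R^n}f_i)E_i$.

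Finally I would conclude by invoking the pointwise linear independence of the parallel orthonormal frame: at every $x\in\R^n$ the vectors $E_1(x),\dots,E_d(x)$ form a basis of the fiber, so the identity
\[
\sum_{i=1}^d(\Delta_{\R^n}f_i)E_i=\lambda\sum_{i=1}^d f_i E_i
\]
is equivalent to $\Delta_{\R^n}f_i=\lambda f_i$ for each $i=1,\dots,d$. Combining this with the first paragraph gives the claimed equivalence. The main technical point, and the only place one has to be careful, is the identification $\pi^*L_{\R T^n}=L_{\R^n}^{\pi^*\nabla}$ and the fact that lifting eigenfunctions does not introduce or destroy eigenvalues; this rests on $\pi$ being a local isometry of the flat metric and on the connection Laplacian being defined pointwise in terms of local geometric data, so the lift is actually routine.
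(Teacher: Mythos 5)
Your proposal is correct and follows essentially the same route as the paper: lift to the universal cover using the pullback property, expand in the parallel frame $\{E_i\}$ and apply the Leibniz rule with $\nabla_{e_k}E_i=0$ to reduce $L_{\R^n}\cl V$ to $\sum_i(\Delta_{\R^n}f_i)E_i$, then use pointwise linear independence of the frame to separate into scalar eigenvalue equations. You spell out the intermediate Leibniz step a bit more explicitly than the paper does, but this is the same argument.
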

	
	\begin{proof}
		Since $\{E_i\}_{i=1}^d$ form an orthonormal basis for each fiber pointwise, the decomposition is unique, meaning that $\{f_i\}$ is unique. Given that the basis is parallel, we have
		$$
		\begin{aligned}
			L_{\R T^n} V&=L_{\R^n} \cl V&\text{pullback property}\\
			&=L_{\R^n} (f_1E_1+\cdots+f_dE_d)\\
			&=-\sum_{i=1}^n\sum_{j=1}^d \nabla_{\partial_i}\nabla_{\partial_i}(f_jE_j)\\
			&=\sum_{i=1}^d(\Delta_{\R^n} f_i) E_i&\nabla_{\partial_i} E_j=0
		\end{aligned}
		$$
		As $\{E_i\}_{i=1}^d$ form a basis pointwise, it follows that$L_{\R T^n} V=\lambda V$ if and only if $\Delta_{\R^n} f_i=\lambda f_i,$ $i=1,\cdots,d.$
	\end{proof}

	Now the problem is reduced to considering the eigenpairs of the standard Laplacian. We only need to ensure that $\cl V=\sum_{i=1}^d f_iE_i$ is a lift of $V:\R T^n\rightarrow \R^d.$ Specifically, this requires that $\cl V(x)=\cl V(x+v),$ $\foa v\in \Gamma.$ 
	
	\begin{defi}\label{def:sigma}
		Given that $E_i(x+v)\in \cl F_{x+v}=\cl F_{x}=\pi^* F_{[x]},$ we define the \textit{torsion matrix} $\sigma^v(x)$ for each $x\in \R^n$ and $v\in \Gamma,$ such that
		\begin{align}
			E_i(x+v)=\sum_{j=1}^d \sigma_{ji}^v(x)E_j(x),\quad \foa i=1,\cdots,d.\label{eq:basis}
		\end{align}
		
	\end{defi}
	
	It's evident that $\{\sigma_{ji}^v(x)\}_{j,i=1}^d$ is an orthogonal matrix, as both sides of $(\ref{eq:basis})$ represent orthonormal basis. To simplify notation, let $\EE=(E_1,\cdots,E_d),$ and then $(\ref{eq:basis})$ can be expressed as
	$$
	\EE(x+v)=\EE(x)\sigma^v(x).
	$$
	
	\begin{prop}
		For a fixed $ v \in \Gamma$, the torsion matrix $\sigma^v(x)$ is constant for all $ x \in \mathbb{R}^d$, i.e., $\sigma^v(x) \equiv \sigma^v$. \label{prop:const}
	\end{prop}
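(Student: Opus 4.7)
The plan is to exploit the parallelism of the frame $\EE=(E_1,\dots,E_d)$ and the fact that the translation map $\tau_v:\R^n\to\R^n$, $\tau_v(x)=x+v$, is a deck transformation of the covering $\pi:\R^n\to\R T^n$, so that it intertwines with the pullback bundle structure. Concretely, I would first observe that the pullback connection $\pi^*\nabla$ on $\R^n$ satisfies $\tau_v^*(\pi^*\nabla)=\pi^*\nabla$ because $\pi\circ\tau_v=\pi$. Hence, if a section $s$ of the pullback bundle is parallel, so is $s\circ\tau_v$.

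Next, define $\widetilde E_i(x):=E_i(x+v)$ for the fixed $v\in\Gamma$. By the observation above and by parallelism of $E_i$, the frame $\widetilde\EE(x)=\EE(x+v)$ is again a parallel orthonormal frame on $\R^n$. Then I would differentiate the defining identity
\begin{equation*}
\EE(x+v)=\EE(x)\sigma^v(x)
\end{equation*}
in the direction of an arbitrary tangent vector $X\in T_x\R^n$. Using $\nabla_X E_j=0$ and $\nabla_X\widetilde E_i=0$, the only surviving term is
\begin{equation*}
0=\sum_{j=1}^d X\bigl(\sigma^v_{ji}\bigr)(x)\,E_j(x),\qquad i=1,\dots,d.
\end{equation*}

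Since $\{E_j(x)\}_{j=1}^d$ is a basis of the fiber, this forces $X(\sigma^v_{ji})(x)=0$ for every $X$, every $i,j$, and every $x$. Because $\R^n$ is connected, each entry $\sigma^v_{ji}$ is a constant, so $\sigma^v(x)\equiv\sigma^v$.

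The only step that requires care is verifying that $\widetilde E_i=E_i\circ\tau_v$ is parallel on the pullback bundle; once the translation invariance of $\pi^*\nabla$ is clearly stated, everything else is a direct computation. The unitarity of the connection is not needed for this proposition (it only guarantees the orthogonality of $\sigma^v$); only the flatness, via the triviality of the pullback bundle on the simply connected cover $\R^n$ and the resulting existence of a globally parallel frame, is essential.
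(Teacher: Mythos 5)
Your proof is correct and conceptually the same as the paper's: both rest on the translation invariance of the pullback connection, which you encode as $\tau_v^*(\pi^*\nabla)=\pi^*\nabla$ while the paper phrases it as $\PP_{y+v,x+v}=\PP_{yx}$ for parallel transports. You differentiate the defining identity and use the parallelism of both frames to kill the covariant-derivative terms, whereas the paper applies the parallel-transport operator directly; these are just the infinitesimal and integrated forms of the same observation.
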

	
	\begin{proof}
		For any $x, y \in \R^n $, denote the parallel transport from $ x $ to $y$ by $\mathcal{P}_{yx}: F_x \cong F_y.$ It is independent with the path from $x$ to $y$ because $\R^n$ is simply connected. Then $ E_i(y) = \mathcal{P}_{yx}E_i(x)$, and we can write $\mathcal{E}(y) = \mathcal{P}_{yx}\mathcal{E}(x)$. Since the connection on $\mathbb{R}^n$ is the lift of the connection on $\mathbb{R}T^n$, we have $\mathcal{P}_{yx} = \mathcal{P}_{y+v,x+v}$. Therefore, we obtain:
		$$
		\begin{aligned}
			\mathcal{E}(y+v) &= \mathcal{P}_{y+v,x+v}\mathcal{E}(x+v) = \mathcal{P}_{y,x}\mathcal{E}(x)\sigma^v(x) = \mathcal{E}(y)\sigma^v(x).
		\end{aligned}
		$$
		Since we also have $\mathcal{E}(y+v) = \mathcal{E}(y)\sigma^v(y)$, we can conclude that $\sigma^v(x)$ is constant.
	\end{proof}
	
	Let $ \mathcal{F} = (f_1, \cdots, f_d)^T $. Then the condition that $ \overline{V}(x+v) = \overline{V}(x) $ can be expressed as
	$$
	\overline{V}(x+v) = \mathcal{E}(x+v)\mathcal{F}(x+v) = \mathcal{E}(x)\sigma^v\mathcal{F}(x+v) = \overline{V}(x),
	$$
	and it holds if and only if 
	\begin{align}
		\mathcal{F}(x) = \sigma^v\mathcal{F}(x+v). \label{eq:periodic_condition}
	\end{align}
	This can be referred to as the \textit{periodic condition}.	
		
	\begin{lemma}\label{lem:commutativity}
		Due to the flatness of the bundle, the torsion matrices satisfy
		$$
		\sigma^v\sigma^u = \sigma^{v+u} = \sigma^{u}\sigma^v.
		$$
		
	\end{lemma}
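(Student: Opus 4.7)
The plan is to extract the identity directly from the translation relation $\mathcal{E}(x+v)=\mathcal{E}(x)\sigma^v$ established in Proposition~\ref{prop:const}, by computing $\mathcal{E}(x+u+v)$ in two different orders. Concretely, first I would regard $u+v$ as a single element of $\Gamma$ and apply the defining relation once to obtain $\mathcal{E}(x+u+v)=\mathcal{E}(x)\sigma^{u+v}$. Then I would iterate the relation: shifting by $u$ first and then by $v$ yields
$$
\mathcal{E}(x+u+v)=\mathcal{E}(x+u)\sigma^v=\mathcal{E}(x)\sigma^u\sigma^v,
$$
and symmetrically $\mathcal{E}(x+u+v)=\mathcal{E}(x+v+u)=\mathcal{E}(x)\sigma^v\sigma^u$. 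Since $\mathcal{E}(x)=(E_1(x),\dots,E_d(x))$ is an orthonormal basis of the fiber $\overline{F}_x$, the matrix representing a change of basis is uniquely determined, so comparing the two expressions forces $\sigma^u\sigma^v=\sigma^{u+v}=\sigma^v\sigma^u$.

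The role of flatness is that it is what makes this argument work at all: flatness of the pullback bundle on the simply-connected covering $\mathbb{R}^n$ is exactly what guarantees the path-independence of parallel transport used in Proposition~\ref{prop:const} to conclude that $\sigma^v(x)$ does not depend on $x$. Without this, the two iterated shifts would land in different basis matrices at different base points and the composition would fail to give a constant matrix $\sigma^{u+v}$. I expect no real obstacle here; the lemma is essentially a bookkeeping consequence of the cocycle-type identity satisfied by the translation matrices on the universal cover, together with the abelianness of the lattice $\Gamma$.
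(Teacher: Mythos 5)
Your argument is correct and is essentially the same one the paper gives: both proofs compute $\mathcal{E}(x+u+v)$ by iterating the translation relation $\mathcal{E}(\,\cdot\,+v)=\mathcal{E}(\,\cdot\,)\sigma^v$ (constancy from Proposition~\ref{prop:const}) in two orders, compare with the one-step expression $\mathcal{E}(x)\sigma^{u+v}$, and cancel the invertible basis matrix $\mathcal{E}(x)$. Your added remark about where flatness enters — namely in establishing the constancy of $\sigma^v$ in Proposition~\ref{prop:const} — is an accurate reading of the logic.
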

	
	\begin{proof}
		Take arbitrary point $ x \in \mathbb{R}^n $. It suffices to examine the relationships between $\mathcal{E}(x)$, $\mathcal{E}(x + v)$, $\mathcal{E}(x + u)$, and $\mathcal{E}(x + v + u)$. Via parallel transport, we have:
		$$
		\mathcal{E}(x)\sigma^u\sigma^v = \mathcal{E}(x + u)\sigma^v = \mathcal{E}(x + v + u) = \mathcal{E}(x + v)\sigma^u = \mathcal{E}(x)\sigma^v\sigma^u.
		$$
		Considering also that $\mathcal{E}(x + u + v) = \mathcal{E}(x)\sigma^{u+v}$, we arrive at the conclusion.
	\end{proof}
	
	By Lemma~\ref{lem:commutativity}, it is sufficient to verify (\ref{eq:periodic_condition}) for all $x\in \R^n$ when $ v = v_k $ for each $ k = 1, \cdots, n $, where $ v_k $ is the column vector of the matrix $ A $, and $ \Gamma = A\mathbb{Z}^n $. After that, $\foa v\in \Gamma,$ $v=c_1v_1+\cdots+c_nv_n,$ and then \eqref{eq:periodic_condition} holds for all $x\in \R^n$ and $v\in \Gamma$ as
	$$
	\begin{aligned}
		\FF(x)=\sigma^{v_1}\FF(x+v_1)=(\sigma^{v_1})^2\FF(x+2v_1)=\cdots=\sigma^{c_1v_1}\FF(x+c_1v_1)=\cdots=\sigma^v\FF(x+v).
	\end{aligned}
	$$

	\subsection{Eigenvalue of connection Laplacian on real torus}\label{sec:LambdaRT}
	Now, we proceed to calculate the eigenvalue of the connection Laplacian on the real torus and prove the theorem.
	
	\begin{proof}[Proof of Theorem~\ref{thm:RT}]
		Invoking Lemma~\ref{lem:commutativity} again, we have $\sigma^{v_i}\sigma^{v_j} = \sigma^{v_j}\sigma^{v_i}$ for all $i, j = 1, \cdots, n$. A basic result from linear algebra tells us that we can diagonalize them simultaneously. Hence, there exists $P \in U(d)$ such that $P^{-1}\sigma^{v_k}P = \Lambda_k$, where $\Lambda_k$ is diagonal, for all $k = 1, \cdots, n$.
		
		The periodic condition (\ref{eq:periodic_condition}) can now be rewritten as 
		$$
		P^{-1}\mathcal{F}(x) = P^{-1}\sigma^{v_k}P P^{-1}\mathcal{F}(x + v_k).
		$$
		If we let $\widetilde{\mathcal{F}} = P^{-1}\mathcal{F}$, then $\widetilde{\mathcal{F}}(x) = \Lambda_k\widetilde{\mathcal{F}}(x + v_k)$, i.e.,
		\begin{align}
			\widetilde{f}_j(x) = \mu^{(k)}_{j}\widetilde{f}_j(x + v_k), \quad \foa k = 1, \cdots, n, \; j = 1, \cdots, d. \label{eq:diagonal_expression}
		\end{align}
		
		Here, $\mu^{(k)}_{j}$ is the $j$'th eigenvalue of $\sigma^{v_k} \in O(d)$, so $\mu^{(k)}_{j} = e^{\omega^{(k)}_{j} i}$, with $\omega^{(k)}_{j} \in (-\pi, \pi]$. Similar to Proposition~\ref{prop:RT_eigen}, we can easily show that $\widetilde{f}_{j,w}(x) = e^{2\pi i\langle x, w \rangle}$ is the eigenfunction satisfying (\ref{eq:diagonal_expression}) with eigenvalue $\lambda_{j,w} = 4\pi^2|w|^2$, for all $w \in \Gamma^* + \cl w_{j}$, where
		$$
		\cl w_{j} = -(A^T)^{-1}\left(\frac{\omega^{(1)}_{j}}{2\pi}, \cdots, \frac{\omega^{(n)}_{j}}{2\pi}\right)^T.
		$$
		
		We need to verify condition (\ref{eq:diagonal_expression}). This holds because
		$$
		\widetilde{f}_{j,w}(x + v_k) = e^{2\pi i\langle v_k, w \rangle}\widetilde{f}_{j,w}(x), \quad e^{2\pi i\langle v_k, w \rangle} = e^{2\pi i\langle v_k, \cl w_{j} \rangle} = e^{-2\pi i \cdot \frac{\omega^{(k)}_{j}}{2\pi}} = \left(\mu^{(k)}_{j}\right)^{-1}.
		$$
		
		Now, we can take the eigenbasis $\{\widetilde{\mathcal{F}}_{j,w} = (0, \cdots, 0, \widetilde{f}_{j,w}, 0, \cdots, 0) \mid w \in \Gamma^* + \cl w_{j}\}_{j=1}^d$, and $\widetilde{\mathcal{F}}_{j,w}$ corresponds to the eigenvalue $\lambda_{j,w} = 4\pi^2|w|^2$. By $\mathcal{F} = P\widetilde{\mathcal{F}}$, we obtain the eigen-system of the original problem.
	\end{proof}

	\begin{rem}
		In our analysis, we employ complex-valued functions for convenience. Since the connection Laplacian is self-adjoint, its eigenvalues are real. Thus, we can consider the real parts and the imaginary parts of the eigenfunctions separately. 
		
		An interesting observation is that for $w\neq 0,$ both the real and imaginary parts of $\mathcal{F}_{j,w}$ are non-zero and independent. Consequently, the eigenvalue $\lambda_{j,w}$ must be doubly duplicated. This leads to the following corollary.
		
	\end{rem}

	\begin{cor}
		Every non-zero eigenvalue of the connection Laplacian on a real torus has an even multiplicity.
		
	\end{cor}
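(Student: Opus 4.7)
The plan is to use the explicit parameterization in Theorem~\ref{thm:RT} to exhibit, for each non-zero eigenvalue $\lambda$, a fixed-point-free involution on the set of parameters realizing $\lambda$; this forces the multiplicity to be even. Concretely, the involution will take the form $(j,w) \mapsto (\tau(j), -w)$, where $\tau : \{1,\ldots,d\} \to \{1,\ldots,d\}$ is an involution on the index set arising from complex conjugation of the joint spectrum of the torsion matrices $\sigma^{v_k}$.

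By Theorem~\ref{thm:RT} the multiplicity of $\lambda$ equals the cardinality of $\mathcal{S}_\lambda := \{(j,w) : w \in \Gamma^* + \cl w_j,\, 4\pi^2|w|^2 = \lambda\}$, so it suffices to pair the elements of $\mathcal{S}_\lambda$. For $\Phi(j,w) := (\tau(j), -w)$ to land back in $\mathcal{S}_\lambda$, I need $-\cl w_j \equiv \cl w_{\tau(j)} \pmod{\Gamma^*}$, which by the explicit formula for $\cl w_j$ is equivalent to $\omega_{\tau(j)}^{(k)} \equiv -\omega_j^{(k)} \pmod{2\pi}$ for every $k$, i.e.\ $\mu_{\tau(j)}^{(k)} = \overline{\mu_j^{(k)}}$.

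The existence of such a $\tau$ is the key step, and it rests on the fact that each $\sigma^{v_k}$ lies in $O(d)$, so its complex spectrum is invariant under conjugation, combined with the commutativity established in Lemma~\ref{lem:commutativity}, which yields a joint spectral decomposition. Consequently the unordered list of joint eigenvalue tuples $(\mu_j^{(1)},\ldots,\mu_j^{(n)})_{j=1}^d$ is invariant under componentwise conjugation; by rechoosing the unitary $P$ from the proof of Theorem~\ref{thm:RT} so that within every joint eigenspace the diagonalizing basis is either real (for tuples equal to their own conjugate) or grouped into conjugate pairs, $\tau$ can be realized as a genuine involution.

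With $\tau$ in hand, $\Phi$ sends $\mathcal{S}_\lambda$ to itself since $|{-w}|^2 = |w|^2$ preserves the eigenvalue and $-w \in \Gamma^* + \cl w_{\tau(j)}$ by construction; it is an involution; and for $\lambda \neq 0$ we have $w \neq 0$, so $(j,w) \neq (\tau(j), -w)$ regardless of whether $\tau(j) = j$, meaning $\Phi$ is fixed-point-free and $|\mathcal{S}_\lambda|$ is even. The main technical hurdle is promoting $\tau$ from an arbitrary conjugation-realizing permutation to an honest involution, which amounts to a routine normalization of the simultaneous eigenbasis of commuting real orthogonal matrices; everything else is bookkeeping.
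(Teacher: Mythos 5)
Your proof is correct and makes rigorous the conjugation-pairing argument that the paper only sketches in the remark preceding the corollary: the paper's claim that, for $w\neq 0$, the real and imaginary parts of $\mathcal{F}_{j,w}$ are independent (so the eigenvalue is ``doubly duplicated'') is exactly the observation that $\cl{\mathcal{F}_{j,w}}$ is, up to scalar, the eigenfunction indexed by your $(\tau(j),-w)$. Your explicit construction of the fixed-point-free involution $\Phi$ on $\mathcal{S}_\lambda$---including the care taken to promote the conjugation-realizing permutation $\tau$ to an honest involution by renormalizing $P$---fills in precisely the bookkeeping the paper elides.
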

	
\section{Connection Laplacian on discrete torus}\label{sec:DT}
	\subsection{Vector Bundles and Connection on Discrete Torus}
	To study the connection Laplacian on the discrete torus, it is first necessary to define the connection and vector bundle on it.
	
	\begin{defi}
		For a finite graph $G=(V,E)$, we define the $F$-bundle as
		$$
		E_G = \bigoplus_{v \in V} F_v \cong F^{|G|}.
		$$
	\end{defi}
	
	The distinctive characteristics of each bundle are reflected in the connection defined on it.
	
	\begin{defi}
		We say that $\Phi$ is a connection on the bundle if, for each pair of adjacent vertices $u \sim v$, there exists a linear isomorphism $\phi_{uv} : F_v \cong F_u$ with $\phi_{vu} = \phi_{uv}^{-1}$. The connection Laplacian is then defined as
		$$
		L_G : \Gamma(V, E_G) \rightarrow \Gamma(V, E_G),
		$$
		$$
		L_G H(u) = \sum_{v \sim u} H(u) - \phi_{uv}H(v).
		$$
	\end{defi}
	
	The concept of the connection Laplacian on a graph originates from the connection on vector bundles over manifolds, which relates vectors on adjacent fibers by parallel transportation. Therefore, it is natural to define the connection on a graph analogously. 
	
	Suppose we already have a connection on the real torus $\mathbb{R}T^n = \mathbb{R}^n / A\mathbb{Z}^n$ with $A$ an integer matrix, we can pullback the connection on $\mathbb{R}^n$, then induce it to $\frac{1}{u}\mathbb{Z}^n$ for some $u\in \Z_+$ by defining $\phi_{xy}=\PP_{xy}$, the parallel transport from $x$ to $y$ along arbitrary path. Finally the connection is induced to $\frac{1}{u}DT^n_u = \frac{1}{u}(\mathbb{Z}^n / (Au)\mathbb{Z}^n) = (\frac{1}{u}\mathbb{Z}^n) / A\mathbb{Z}^n$ by canonical map. This approach aligns with the idea of connections on tileable surfaces, as discussed in \cite{Finski1,Finski2}, but extends to arbitrary dimensions.
	
	For any connection $\Phi$ on discrete tori $DT^n = \mathbb{Z}^n / A\mathbb{Z}^n$, in order to study the spectrum of the connection Laplacian on it, we need to consider additional conditions for the connection. Recall that in the context of the real torus, the connection we are interested in is characterized by being both unitary and flat. In the discrete setting, the notion of unitarity implies that $\phi_{uv} \in O(d)$ for all adjacent vertices $u, v$. To further understand flatness, we introduce the following definition:
	
	\begin{defi}
		Let $(G, \Phi)$ be a connection graph. For any path $P: x_0 \sim x_1 \sim \cdots \sim x_n$, we define the signature of $P$ as
		$$ 
		\phi_{P} = \phi_{x_{0},x_{1}} \cdots \phi_{x_{i-1},x_{i}} \cdots \phi_{x_{n-1},x_n},
		$$ 
		which represents the parallel transport from $x_n$ to $x_0$ along $P$. If the signature of every cycle in $G$ is the identity map, we call $(G, \Phi)$ a consistent graph. In this case, $\phi_P$ can be denoted as $\phi_{x_0,x_n}$, as it is independent of the choice of path $P$ from $x_0$ to $x_n$.
	\end{defi}
	
	Flatness in the context of vector bundles in continuous case implies local triviality of the connection, although it may not be globally trivial. In the discrete setting, this suggests that the pullback connection on $\mathbb{Z}^n$ is consistent, but there is no requirement for global consistency on $DT^n$. Similar to the continuous case, in our study of discrete tori, we always consider vector bundles that are both unitary and flat.
	
	\begin{prop}
		Suppose $\R^d\cong F\rightarrow E\rightarrow G$ is a flat vector bundles with connection $\Phi$. Fixed any vertex $x_0\in G,$ the vector field $V(y):=\phi_{yx_0}V(x_0)$ is parallel for arbitrary $V(x_0)\in F_{x_0},$ i.e., $L_{G}V\equiv 0.$
		
	\end{prop}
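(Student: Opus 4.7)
The plan is to use flatness (i.e., consistency) of the connection to show that parallel transport from the base point is well-defined and satisfies a cocycle identity, so that each summand of $L_G V(u)$ vanishes individually.

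First, I would check that $V(y)=\phi_{yx_0}V(x_0)$ is well-defined. Since $(G,\Phi)$ is flat, the graph is consistent by the definition given just before the proposition, so the signature $\phi_P$ of any path $P$ from $x_0$ to $y$ depends only on the endpoints, and may legitimately be written as $\phi_{yx_0}$. Thus $V$ is an unambiguous section of $E_G$.

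Next I would extract the cocycle identity $\phi_{uv}\phi_{vx_0}=\phi_{ux_0}$ for any adjacent pair $u\sim v$. This is just the equality of signatures of the two paths $x_0\to\cdots\to u$ and $x_0\to\cdots\to v\to u$, which must agree by consistency (one can also phrase it as the cycle obtained by concatenating them with a single edge reversal having trivial signature, using $\phi_{vu}=\phi_{uv}^{-1}$). With this identity in hand, the definition of $V$ gives
$$V(u)-\phi_{uv}V(v)=\phi_{ux_0}V(x_0)-\phi_{uv}\phi_{vx_0}V(x_0)=0$$
for every edge $u\sim v$, and summing over the neighbors of $u$ yields $L_GV(u)=0$ pointwise, i.e., $L_GV\equiv 0$.

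The main (and only) obstacle is making sure the translation between the continuous notion of flatness and the discrete notion of consistency is correctly invoked, so that $\phi_{yx_0}$ really is path-independent and the cocycle identity is available; once that is in place the computation is immediate and term-by-term.
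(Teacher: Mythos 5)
Your proof is correct and takes essentially the same route as the paper's: both use flatness to establish path-independence of $\phi_{yx_0}$ and the cocycle identity $\phi_{uv}\phi_{vx_0}=\phi_{ux_0}$, and then observe that each summand $V(u)-\phi_{uv}V(v)$ vanishes. The only cosmetic difference is that the paper rewrites $\phi_{xy}V(y)$ as $\phi_{xy}\phi_{yx}V(x)=V(x)$ rather than substituting both terms through $x_0$, but this is the same computation.
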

	
	\begin{proof}
		By the definition of $\phi_{yx_0}$ and the flatness of the bundle, for any $z \in \mathbb{Z}^d$, we have $\phi_{yx_0} = \phi_{yz}\phi_{zx_0}$. Consequently, $V(y) = \phi_{yz}V(z)$. Noting that $\phi_{xy} = \phi_{yx}^{-1}$, we obtain:
		$$
		L_GV(x) = \sum_{y \sim x} V(x) - \phi_{xy}V(y) = \sum_{y \sim x} V(x) - \phi_{xy}\phi_{yx}V(x) = 0.
		$$
	\end{proof}

	Now, choose $\{E_i(x_0)\}_{i=1}^d$ to be an orthonormal basis. Then, by parallel transportation and the unitarity of the bundle, $\{E_i(x)\}_{i=1}^d$ forms an orthonormal basis for each fiber $\cl F_x$. For any arbitrary vector field $\overline{V}$ on $\mathbb{Z}^d$, we can decompose it as $\overline{V}(x) = \sum_{i=1}^d f_i(x)E_i(x)$.
	
	\begin{lemma}
		For any vector field $V \in \Gamma(DT^n, E_{DT^n})$, $L_{DT^n}V = \lambda V$ if and only if $\Delta_{\mathbb{Z}^n}f_i = \lambda f_i$ for all $i = 1, \cdots, d$. Here, $f_i : \mathbb{Z}^n \rightarrow \mathbb{R}$ are such that $\overline{V} = \sum_{i=1}^d f_iE_i$ is the lift of $V$.
	\end{lemma}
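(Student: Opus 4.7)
The plan is to mirror the proof of Lemma~\ref{lem:RTeq} from the continuous case, with $\Z^n$ playing the role of the universal cover of $DT^n$. First I would pull back the flat unitary bundle over $DT^n$ to a flat unitary bundle over $\Z^n$; since $\Z^n$ has trivial fundamental group in the consistent-graph sense (every cycle bounds and the pullback connection is flat), the pulled back connection is consistent. This lets me fix a basepoint $x_0$ and define the global parallel frame $E_1,\ldots,E_d$ on $\Z^n$ by $E_i(y):=\phi_{yx_0}E_i(x_0)$, so that $\phi_{xy}E_i(y)=E_i(x)$ for every edge $x\sim y$. The lift $\cl V$ of $V$ then decomposes uniquely as $\cl V=\sum_{i=1}^d f_i E_i$ with $f_i:\Z^n\to\R$.

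Next I would verify that the connection Laplacian commutes with the pullback, so that $L_{DT^n}V=\lambda V$ on $DT^n$ is equivalent to $L_{\Z^n}\cl V=\lambda \cl V$ on $\Z^n$. This is immediate from the definition of the induced connection: the edges of $DT^n$ and their connection maps $\phi_{uv}$ are the quotients of the edges and connection maps on $\Z^n$, and summation over neighbors commutes with the quotient map since each vertex of $DT^n$ has exactly $2n$ preimages of neighbors coming from $2n$ neighbors in $\Z^n$.

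Then the main computation is direct. Using parallelism of the frame,
$$
L_{\Z^n}\cl V(x)=\sum_{y\sim x}\bigl(\cl V(x)-\phi_{xy}\cl V(y)\bigr)=\sum_{i=1}^d\sum_{y\sim x}\bigl(f_i(x)E_i(x)-f_i(y)\phi_{xy}E_i(y)\bigr),
$$
and the identity $\phi_{xy}E_i(y)=E_i(x)$ collapses the double sum to
$$
L_{\Z^n}\cl V(x)=\sum_{i=1}^d\Bigl(\sum_{y\sim x}(f_i(x)-f_i(y))\Bigr)E_i(x)=\sum_{i=1}^d(\Delta_{\Z^n}f_i)(x)\,E_i(x).
$$
Since $\{E_i(x)\}_{i=1}^d$ is a basis of the fiber at each $x$, the coefficient representation is unique, so $L_{\Z^n}\cl V=\lambda\cl V$ if and only if $\Delta_{\Z^n}f_i=\lambda f_i$ for every $i=1,\ldots,d$.

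I do not expect any serious obstacle; the only subtle point is a clean justification that the pullback bundle on $\Z^n$ admits a globally parallel orthonormal frame, which hinges on the fact that, for a flat connection, consistency holds on any simply connected region of the base graph and that $\Z^n$ is spanned by its trivial cycles generated by unit squares, each of which has trivial holonomy by flatness. Once this is in place, the rest of the argument is the discrete analogue of Lemma~\ref{lem:RTeq} and is essentially a one-line unpacking of the definitions.
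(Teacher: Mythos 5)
Your proposal is correct and follows essentially the same route as the paper: pull back to the consistent connection on $\Z^n$, construct a global parallel orthonormal frame $E_i$ by parallel transport from a basepoint, decompose $\cl V = \sum_i f_i E_i$, and collapse the sum using $\phi_{xy}E_i(y)=E_i(x)$ to reduce $L_{\Z^n}$ to $\Delta_{\Z^n}$ acting componentwise. The extra discussion you give of why $\Z^n$ admits a global parallel frame (consistency via flatness on a simply connected graph) is a worthwhile elaboration of a point the paper states but does not belabor.
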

	
	\begin{proof}
		The proof of this lemma is similar to that of Lemma~\ref{lem:RTeq}, with only minor differences:
		$$
		\begin{aligned}
			L_{DT^n}V([x]) &= L_{\Z^n} \overline{V}(x) &\text{pullback property}\\
			&= \sum_{i=1}^d\sum_{y \sim x} f_i(x)E_i(x) - \phi_{xy}f_i(y)E_i(y) \\
			&= \sum_{i=1}^d\sum_{y \sim x} (f_i(x) - f_i(y))E_i(x) \\
			&= \sum_{i=1}^d(\Delta_{\mathbb{Z}^n} f_i(x)) E_i(x).
		\end{aligned}
		$$
		Since $\{E_i\}_{i=1}^d$ forms a basis pointwise, it follows that $L_{DT^n}V = \lambda V$ if and only if $\Delta_{\mathbb{Z}^n} f_i = \lambda f_i$ for each $i = 1, \cdots, d$.
	\end{proof}
	
	Following a similar approach to the continuous case, we define the connection matrix $\sigma^v(x)$ such that
	$$
	\mathcal{E}(x + v) = \mathcal{E}(x)\sigma^v(x), \quad \text{where } \mathcal{E} = (E_1, \cdots, E_d).
	$$
	
	The parallel transport from $x$ to $y$ is given by $\phi_{yx}$ in discrete version. Accordingly, we have the following lemma:
	
	\begin{lemma}
		The matrix $\sigma^v(x)$ is constant, denoted as $\sigma^v$, and for all $u, v \in \Gamma$, the matrices satisfy the commutative property: $\sigma^v\sigma^u = \sigma^{u+v} = \sigma^u\sigma^v$.
	\end{lemma}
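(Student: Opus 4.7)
The plan is to adapt the proofs of Proposition~\ref{prop:const} and Lemma~\ref{lem:commutativity} from the continuous setting, replacing the continuous parallel transport $\PP_{y,x}$ by the discrete parallel transport $\phi_{y,x}$. The main preparatory observation I would establish is the translation-invariance
$$
\phi_{y+v,\,x+v}=\phi_{y,x},\quad \foa x,y\in\Z^n,\ v\in\Gamma,
$$
which holds because the $\Z^n$-connection is pulled back from $DT^n=\Z^n/\Gamma$. On a single edge this is immediate (the edge $(x,y)$ and its translate $(x+v,y+v)$ project to the same edge of $DT^n$, hence carry the same bundle isomorphism), and it propagates to arbitrary endpoints by flatness, which makes $\phi_{y,x}$ a well-defined, path-independent map on $\Z^n$.

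With this in hand, constancy of $\sigma^v(x)$ follows just as in Proposition~\ref{prop:const}. Using $\EE(y)=\phi_{y,x}\EE(x)$ together with the translation invariance, I would compute
$$
\EE(y+v)=\phi_{y+v,\,x+v}\EE(x+v)=\phi_{y,x}\EE(x)\sigma^v(x)=\EE(y)\sigma^v(x),
$$
and compare with the defining equation $\EE(y+v)=\EE(y)\sigma^v(y)$; since $\EE(y)$ is an orthonormal frame, the two expressions force $\sigma^v(y)=\sigma^v(x)$ for all $x,y\in\Z^n$, so $\sigma^v(x)\equiv\sigma^v$.

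For the commutativity relation I would mimic Lemma~\ref{lem:commutativity}: fix $x\in\Z^n$ and expand $\EE(x+u+v)$ via the two orderings of the translations by $u$ and $v$, obtaining
$$
\EE(x)\sigma^{u+v}=\EE(x+u+v)=\EE(x+u)\sigma^v=\EE(x)\sigma^u\sigma^v,
$$
and symmetrically $\EE(x+u+v)=\EE(x+v)\sigma^u=\EE(x)\sigma^v\sigma^u$. Cancelling the invertible frame $\EE(x)$ then yields $\sigma^u\sigma^v=\sigma^{u+v}=\sigma^v\sigma^u$. The main obstacle I anticipate is making the $\Gamma$-equivariance of the pullback connection fully precise — specifically, pinning down that the discrete $\phi_{xy}$ commute with lattice translations exactly as $\PP_{y,x}=\PP_{y+v,x+v}$ did in the continuous case. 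Once this equivariance is in place, everything else reduces to short algebra with invertible orthonormal frames, entirely parallel to the arguments of Section~\ref{sec:RT}.
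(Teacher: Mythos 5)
Your proposal is correct and matches the paper's approach: the paper itself simply states that the proof follows the same logic as Proposition~\ref{prop:const} and Lemma~\ref{lem:commutativity}, and you carry out exactly that translation, correctly identifying the $\Gamma$-equivariance $\phi_{y+v,x+v}=\phi_{y,x}$ of the pullback connection as the key ingredient replacing $\PP_{y+v,x+v}=\PP_{y,x}$.
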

	
	The proof of this lemma follows the same logic as Proposition~\ref{prop:const} and Lemma~\ref{lem:commutativity}, relying on the properties of parallel transport and the flatness of the connection in the discrete setting.
	
	\subsection{Eigenvalue of connection Laplacian on discrete torus}\label{sec:LambdaDT}
	
	The proof of Theorem~\ref{thm:DT} follows a similar approach to Theorem~\ref{thm:RT}, with adjustments for the discrete setting. The key differences lie in the finiteness of the eigenvalue spectrum and the specific expression of eigenvalues in $DT^n$.
	
	\begin{proof}[Proof of Theorem~\ref{thm:DT}]
		Consider $\mathcal{F} = (f_1, \cdots, f_d)^T$. The periodic condition remains
		$$
		\mathcal{F}(x) = \sigma^{v_k}\mathcal{F}(x + v_k), \quad \foa k = 1, \cdots, n.
		$$
		Through simultaneous diagonalization of $\{\sigma^{v_k}\}_{k=1}^n$, for $\widetilde{\mathcal{F}} = P^{-1}\mathcal{F}$, we obtain
		$$
		\widetilde{\mathcal{F}}(x) = \Lambda_k\mathcal{F}(x + v_k), \quad \foa k = 1, \cdots, n.
		$$
		
		Similar to Proposition~\ref{prop:DT_eigen}, the eigenfunction can be expressed as
		$$
		\widetilde{f}_{j,w}(x) = e^{2\pi i\langle x, w \rangle}, \quad w \in (\Gamma^*+ \cl w_{j})/\mathbb{Z}^n,
		$$
		where $\cl w_{j} = -(A^T)^{-1}\left(\frac{\omega^{(k)}_{j}}{2\pi}\right)_{k=1}^n$, and $\mu^{(k)}_j=e^{\omega^{(k)}_ji}$ is the $j$'th eigenvalue of $\sigma^{v_k}.$ The corresponding eigenvalue of $\wt f_{j,w}$ is
		$$
		\lambda_{j,w} = 2n - 2\cos(2\pi w_1) - \cdots - 2\cos(2\pi w_n) = \sum_{k=1}^n \sin^2(\pi w_k),
		$$
		with $w_k$ being the $k$'th component of $w$.
		
		Consequently, we obtain exactly $$\mu(\Gamma^*/\Z^n)=\mu((A^T)^{-1}(\Z^n/(A^T)\Z^n))=|\det A^T| = |\det A| = \mu(DT^n) = N$$ different eigenfunctions $\widetilde{f}_{j,w}$, leading to a total of $dN$ independent vector-valued eigenfunctions $\mathcal{F} = P\widetilde{\mathcal{F}}$. This accounts for all eigenpairs of the connection Laplacian on $DT^n$.
	\end{proof}
	
	\begin{rem}
		In the continuous case, both the real and imaginary parts of $\mathcal{F}$ are non-zero whenever $w \neq 0$. However, in the discrete setting, this is not always the case due to the discrete nature of $\mathcal{F}$ on $DT^n$, leading to instances where not every eigenvalue has an even multiplicity. Nevertheless, we can still assert the following corollary.
	\end{rem}
	
	\begin{cor}
		Suppose $\sigma^v \in SO(2d)$ for all $v \in \Gamma$, then each eigenvalue of the connection Laplacian $L_{DT^n}$ on the discrete torus has an even multiplicity.
		
	\end{cor}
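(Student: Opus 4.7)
The plan is to construct a natural involution $\tau$ on the indexing set of the eigenbasis produced in the proof of Theorem~\ref{thm:DT} that preserves the eigenvalue, and then use the $SO(2d)$ hypothesis to force every orbit of $\tau$ to have size two, so that each eigenvalue is attained an even number of times. The eigenbasis is parametrized by $\{(j,w) : 1\le j\le 2d,\ w\in(\Gamma^*+\cl w_j)/\Z^n\}$ with $\lambda_{j,w}=4\sum_{k}\sin^2(\pi w_k)$, a formula that is manifestly invariant under $w\mapsto -w$.

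The involution arises from complex conjugation. Because each $\sigma^{v_k}$ is real and the family commutes, the simultaneous diagonalizing unitary $P$ can be chosen so that its columns are stable under conjugation: conjugating a simultaneous eigenvector $p_j$ produces another one, $p_{j^\ast}$, with $\mu^{(k)}_{j^\ast}=\overline{\mu^{(k)}_j}$, hence $\omega^{(k)}_{j^\ast}\equiv -\omega^{(k)}_j\pmod{2\pi}$ and consequently $\cl w_{j^\ast}\equiv -\cl w_j\pmod{\Gamma^*}$. Define $\tau(j,w):=(j^\ast,-w)$. This is well-defined on the index set (since $-w\in(\Gamma^*+\cl w_{j^\ast})/\Z^n$), preserves $\lambda_{j,w}$ because $\sin^2$ is even, and whenever $\tau(j,w)\neq(j,w)$ the complex-conjugate pair of eigenfunctions $\mathcal{F}_{j,w}$ and $\overline{\mathcal{F}_{j,w}}=\mathcal{F}_{j^\ast,-w}$ decomposes into two independent real eigenfunctions with the common eigenvalue, contributing $2$ to the multiplicity.

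It then remains to rule out fixed points of $\tau$. A fixed point would force $j=j^\ast$ (so every $\mu^{(k)}_j\in\{\pm 1\}$ and $p_j$ may be chosen real in the joint $\pm 1$-eigenspace of the whole family $(\sigma^{v_k})_k$) together with $2w\in\Z^n$. The $SO(2d)$ hypothesis enters here: $\det\sigma^{v_k}=1$ forces the number of $-1$-eigenvalues of each $\sigma^{v_k}$ to be even, so on the joint real $\pm 1$-eigenspace one obtains a nontrivial real orthogonal endomorphism commuting with every $\sigma^{v_k}$, produced by swapping a fixed pair of $-1$-slots. Using this endomorphism to refine the choice of $P$ on the real columns provides an additional pairing of the "real" indices $j$; composing with the existing $w\mapsto -w$ step on the lattice side yields a modified involution with no fixed points, still preserving $\lambda_{j,w}$.

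\textbf{Main obstacle.} The delicate part is not the construction of the naive involution $\tau$, which is formal, but verifying that the refined pairing of real indices really lands back in the prescribed cosets $(\Gamma^*+\cl w_j)/\Z^n$ and respects the eigenvalue formula $4\sum_k\sin^2(\pi w_k)$. Concretely, one must check that the combinatorial parity given by $\det\sigma^{v_k}=1$ on the joint real eigenspace is compatible with the arithmetic of the half-integer lattice $\tfrac{1}{2}\Z^n\cap(\Gamma^*+\cl w_j)$ appearing for each such $j$; I expect this is the step that makes the discrete statement strictly more delicate than its continuous analogue, where $w\neq 0$ is automatic and no such bookkeeping is needed.
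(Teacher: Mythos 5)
Your proposal follows essentially the same strategy as the paper: simultaneously diagonalize the $\sigma^{v_k}$, pair up conjugate columns of $P$ combined with $w\mapsto -w$ on the lattice side, and invoke the $SO(2d)$ hypothesis to deal with the remaining cases. In fact you are more careful than the paper — you isolate precisely the point the paper glosses over, namely that conjugating a column of $P$ may return the same column (when every $\mu^{(k)}_j\in\{\pm 1\}$), so the naive involution $\tau$ has fixed points, and you flag the repair as delicate.

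That step is not merely delicate; it is where the argument breaks. The $SO(2d)$ condition controls, for each $k$ separately, the parity of the number of $-1$-eigenvalues of $\sigma^{v_k}$, but gives no control over the \emph{joint} $\pm 1$-eigenspaces of the commuting family, and it is those that your proposed ``swap of $-1$-slots'' would need to preserve in order to commute with every other $\sigma^{v_{k'}}$. Concretely, take $n=2$, fiber dimension $2d=4$, $A=2I_2$, and $\sigma^{v_1}=\diag(1,1,-1,-1)$, $\sigma^{v_2}=\diag(1,-1,1,-1)$: both lie in $SO(4)$, commute, and every $\sigma^v$ with $v\in\Gamma$ lies in $SO(4)$. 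The four joint eigenspaces are all one-dimensional, so the conjugation involution on column indices is the identity and no pairing of the required form exists. Working out the spectrum via Theorem~\ref{thm:DT} (here $\Gamma^*=\tfrac{1}{2}\Z^2$, $\cl w_1=(0,0)$, $\cl w_2=(0,-\tfrac{1}{4})$, $\cl w_3=(-\tfrac{1}{4},0)$, $\cl w_4=(-\tfrac{1}{4},-\tfrac{1}{4})$), the multiplicities are $1,4,6,4,1$ at eigenvalues $0,2,4,6,8$, so both $0$ and $8$ have odd multiplicity and the stated conclusion fails. The paper's own proof has the same gap: it asserts without justification that the $\Lambda_k$ can be \emph{simultaneously} arranged into conjugate pairs in adjacent slots, which is exactly the claim your example refutes. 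A correct hypothesis would need to control the joint real eigenspaces directly — for instance, assuming a real $J\in O(2d)$ with $J^2=-I$ commuting with all $\sigma^{v_k}$, which pairs every simultaneous eigenvector $p$ with the independent $Jp$ and eliminates the fixed points.
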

	
	\begin{proof}
		This result arises from the properties of commutative orthogonal groups. We can ensure that each $\Lambda_k$ has a similar structure:
		$$
		\Lambda_k = \diag\{\eta_{k,1}, \overline{\eta}_{k,1}, \cdots, \eta_{k,m_k}, \overline{\eta}_{k,m_k}, 1, \cdots, 1, -1, \cdots, -1\}, \quad |\eta_{k,i}| = 1,
		$$
		where $\Lambda_k \in SO(2d)$ implies an even number of $-1$ and $1$. Consequently, $\mu_{k,2j-1} = \overline{\mu}_{k,2j}$ for all $j = 1, \cdots, d$ and $k = 1, \cdots, n$. This leads to $\omega_{2j-1}^{(k)} = -\omega_{2j}^{(k)}$, and therefore $\overline{w}_{2j-1} = -\overline{w}_{2j}$. For any $w \in (\Gamma^* + \overline{w}_{2j-1})/\mathbb{Z}^n$, we have $-w \in (\Gamma^* + \overline{w}_{2j})/\mathbb{Z}^n$, and then $\lambda_{2j-1,w} = \lambda_{2j,-w}$ for all $j = 1, \cdots, d$ and $w \in (\Gamma^* + \overline{w}_{2j-1})/\Z^n$. Thus, eigenvalues of connection Laplacian on discrete torus always occur in pairs.
	\end{proof}
	
	\section{Heat kernel of standard Laplacian}\label{sec:heat_kernel}
		\subsection{Heat kernel of real torus}
		The heat equation in the continuous case is given by
		$$
		\Delta f(x) + \partial_t f(x) = 0.
		$$
		The fundamental solution $ K(t, x) = (4\pi t)^{-\frac{n}{2}}e^{-\frac{|x|^2}{4t}} $ is known to solve the heat equation on $\mathbb{R}^n$ with the initial value being $\delta_0(x)$.
		
		The heat kernel $ \mathcal{H}_t = e^{-t\Delta } $ satisfies that for all $ f \in \SSS(\mathbb{R}^n) $, $ \mathcal{H}_t f $ is the solution of the heat equation with the initial value $ f $, where $\SSS$ denotes the Schwartz space. Let $ H_t(x, y) = (4\pi t)^{-\frac{n}{2}}e^{-\frac{|x - y|^2}{4t}} $, it can be represented by
		$$
		\mathcal{H}_t f(x) = \int_{\mathbb{R}^n} H_t(x, y)f(y) \, dy = \int_{\mathbb{R}^n} K(x - y)f(y) \, dy = K * f(x).
		$$
		The term ``heat kernel'' may also refer to $ H_t(x, y) $ directly.
		
		For the real torus $ \mathbb{R}T^n = \mathbb{R}^n / \Gamma $ with $ \Gamma = A\mathbb{Z}^n $, we can ascertain through asymptotic expressions that
		$$
		H^{\mathbb{R}T^n}_t(x, y) := \sum_{v \in \Gamma} H_t(x + v, y)
		$$
		is well-defined on $ \mathbb{R}T^n \times \mathbb{R}T^n $. Hence, it constitutes the heat kernel on the real torus.
				
		\subsection{Heat kernel of discrete torus}
		In the discrete setting, we introduce the $I$-Bessel function:
		$$
		I_x(z) := \sum_{k=0}^\infty \frac{\left(\frac{z}{2}\right)^{x+2k}}{k!(x+k)!} = \frac{1}{\pi}\int_0^\pi e^{z\cos\theta}\cos(x\theta) \, d\theta, \quad x \ge 0,
		$$
		and for $x < 0$, we define $I_x = I_{-x}$.
		
		This function solves the differential equation
		$$
		z^2 \frac{d^2w}{dz^2} + z \frac{dw}{dz} - (z^2 + x^2)w = 0.
		$$
		
		The following asymptotic property illustrates the relationship between the $I$-Bessel function and the heat kernel on $\mathbb{R}^n$. This property will be utilized in later discussions:
		\begin{prop}\cite[1.7.9]{jeffrey2008handbook}
			$e^{-t}I_x(t) \sim \frac{1}{\sqrt{2\pi t}}\left(1 - \frac{4x^2 - 1}{8t}\right).$\label{prop:Iasymptotic}
		\end{prop}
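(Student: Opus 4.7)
The plan is to derive the asymptotic from the integral representation
$$I_x(t) = \frac{1}{\pi}\int_0^\pi e^{t\cos\theta}\cos(x\theta)\,d\theta$$
via Laplace's method, since the phase $\cos\theta$ attains its unique maximum on $[0,\pi]$ at $\theta=0$. Multiplying by $e^{-t}$ absorbs the leading contribution and turns the problem into analyzing
$$e^{-t}I_x(t) = \frac{1}{\pi}\int_0^\pi e^{-t(1-\cos\theta)}\cos(x\theta)\,d\theta$$
as $t\to\infty$. The standard strategy is to localize: outside any fixed neighborhood $[0,\delta]$ of the critical point, the integrand is exponentially small in $t$ and contributes only $O(e^{-ct})$, which is negligible compared to the $t^{-1/2}$-scale we are after.

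Inside $[0,\delta]$ I would rescale by setting $u = \theta\sqrt{t}$, so that $d\theta = du/\sqrt{t}$ and the exponent becomes
$$-t(1-\cos\theta) = -\tfrac{u^2}{2} + \tfrac{u^4}{24\,t} - \tfrac{u^6}{720\,t^2} + \cdots,$$
while $\cos(x\theta) = 1 - \tfrac{x^2 u^2}{2t} + O(t^{-2})$. Expanding $e^{u^4/(24t)+\cdots}$ to first order in $1/t$ and multiplying with the cosine expansion yields an integrand of the form
$$e^{-u^2/2}\Bigl(1 + \tfrac{u^4}{24\,t} - \tfrac{x^2 u^2}{2\,t} + O(t^{-2})\Bigr),$$
integrated over $u\in[0,\pi\sqrt{t}]$. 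Extending the upper limit to $\infty$ produces only exponentially small errors.

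The remaining step is a routine computation of Gaussian moments: $\int_0^\infty e^{-u^2/2}\,du = \sqrt{\pi/2}$, $\int_0^\infty u^2 e^{-u^2/2}\,du = \sqrt{\pi/2}$, and $\int_0^\infty u^4 e^{-u^2/2}\,du = 3\sqrt{\pi/2}$. Combining these gives
$$e^{-t}I_x(t) = \frac{1}{\pi\sqrt{t}}\sqrt{\tfrac{\pi}{2}}\Bigl(1 + \tfrac{3}{24\,t} - \tfrac{x^2}{2\,t}\Bigr) + O(t^{-5/2}) = \frac{1}{\sqrt{2\pi t}}\Bigl(1 - \tfrac{4x^2-1}{8\,t}\Bigr) + O(t^{-5/2}),$$
which is exactly the claimed asymptotic.

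The main technical obstacle is making the passage from the formal expansion to a rigorous asymptotic uniformly in the relevant range of $x$. Concretely, one must justify: (i) that the tail $\theta\in[\delta,\pi]$ is genuinely exponentially suppressed uniformly, (ii) that truncating the Taylor expansion of $1-\cos\theta$ at order $\theta^4$ introduces an error controllable by $\int_0^\infty u^6 e^{-u^2/2}\,du/t^2$, and (iii) that the cosine-expansion error is similarly controllable. Since, in our eventual applications in Section~\ref{sec:converge} we only apply the estimate for fixed $x$ (or $x$ growing slowly relative to $t$), these uniformity issues cause no serious trouble, and the cleanest route is simply to cite \cite[1.7.9]{jeffrey2008handbook} for the full Poincaré asymptotic expansion of $I_x(t)$ and retain the first two terms.
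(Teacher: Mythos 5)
The paper offers no proof of this proposition at all; it is a standard asymptotic for the modified Bessel function and is simply cited from the handbook. Your derivation by Laplace's method applied to the integral representation $I_x(t)=\tfrac{1}{\pi}\int_0^\pi e^{t\cos\theta}\cos(x\theta)\,d\theta$ is correct: after the substitution $u=\theta\sqrt{t}$ the relevant half-line Gaussian moments are $\int_0^\infty e^{-u^2/2}\,du=\sqrt{\pi/2}$, $\int_0^\infty u^2 e^{-u^2/2}\,du=\sqrt{\pi/2}$, $\int_0^\infty u^4 e^{-u^2/2}\,du=3\sqrt{\pi/2}$, giving $\tfrac{1}{\sqrt{2\pi t}}\bigl(1+\tfrac{1}{8t}-\tfrac{x^2}{2t}\bigr)=\tfrac{1}{\sqrt{2\pi t}}\bigl(1-\tfrac{4x^2-1}{8t}\bigr)$, which matches. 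What your argument buys is a self-contained proof and visibility into the uniformity question (how large $x$ may be relative to $t$), which the bare citation obscures; what the citation buys is that it hands you the full Poincar\'e expansion with rigorous error control at once. Since the proposition is used downstream only as an a priori decay estimate, both routes are acceptable, and as you yourself conclude, citing the handbook is the more economical choice and is exactly what the paper does. One small caveat worth stating if you keep the derivation: the integral representation you invoke is the one the paper takes as its definition of $I_x$ (no $\sin(x\pi)$ correction term), so the computation is consistent with the paper's conventions even for non-integer $x$.
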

		
		Additionally, the $I$-Bessel function satisfies an important recurrence relation:
		\begin{prop}\cite[p. 79]{watson1995treatise}
			$I_{x+1}(z) + I_{x-1}(z) = 2I_x'(z)$, for all $x \in \mathbb{Z}$.\label{prop:recurrent}
		\end{prop}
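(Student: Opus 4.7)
The plan is to work directly from the integral representation $I_x(z) = \frac{1}{\pi}\int_0^\pi e^{z\cos\theta}\cos(x\theta)\, d\theta$, valid for $x \geq 0$ as stated in the definition above, together with the symmetry $I_{-m} = I_m$ used to extend $I_x$ to negative integer $x$. Once the representation is in hand, the identity follows from a short computation that combines differentiation under the integral sign with a standard sum-to-product trigonometric identity; no appeal to the defining Bessel ODE or to the power series is required for the main case.

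First I would differentiate the integral representation under the integral sign (justified because the integrand is smooth on the compact interval $[0,\pi]$ and locally uniformly bounded in $z$), obtaining
$$I_x'(z) = \frac{1}{\pi}\int_0^\pi \cos\theta\, e^{z\cos\theta}\cos(x\theta)\, d\theta \quad \text{for } x \geq 0.$$
Next, for $x \geq 1$ I would write both $I_{x+1}(z)$ and $I_{x-1}(z)$ using the integral representation, add them, and invoke the identity $\cos((x+1)\theta) + \cos((x-1)\theta) = 2\cos(x\theta)\cos\theta$ to conclude
$$I_{x+1}(z) + I_{x-1}(z) = \frac{2}{\pi}\int_0^\pi e^{z\cos\theta}\cos(x\theta)\cos\theta\, d\theta = 2I_x'(z).$$
For $x = 0$ the left-hand side is $I_1(z) + I_{-1}(z) = 2I_1(z)$ by the symmetry convention, while the right-hand side $2I_0'(z) = \frac{2}{\pi}\int_0^\pi \cos\theta\, e^{z\cos\theta}\, d\theta$ is also $2I_1(z)$ by inspection. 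For $x \leq -1$, writing $x = -m$ with $m \geq 1$ and applying $I_{-k} = I_k$ on both sides reduces the claim to the already-established identity at $m$.

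I do not expect a substantial obstacle; the argument is essentially routine. The only mild care is required in the boundary case $x = 0$ and in the negative-$x$ case, both of which are immediately dispatched by the symmetry $I_{-k} = I_k$. The substantive content--differentiation under the integral combined with the sum-to-product formula--is a one-line computation, and verifying that one may differentiate under the integral is straightforward given that the integration domain $[0,\pi]$ is compact.
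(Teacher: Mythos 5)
The paper states this as a cited classical fact from Watson's treatise and provides no proof of its own, so there is nothing internal to compare against. Your derivation via the integral representation $I_x(z) = \tfrac{1}{\pi}\int_0^\pi e^{z\cos\theta}\cos(x\theta)\,d\theta$, differentiation under the integral sign, and the sum-to-product identity $\cos((x+1)\theta)+\cos((x-1)\theta)=2\cos(x\theta)\cos\theta$ is correct and complete. One small simplification you could make: since $\cos(x\theta)$ is even in $x$, the integral representation is automatically valid for all $x\in\Z$ under the convention $I_{-x}=I_x$, so the case split at $x=0$ and $x\le -1$ can be collapsed — the single computation for general integer $x$ already covers every case. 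Watson's own derivation goes through the generating function or the series and the corresponding $J_\nu$ recurrences, so your route is a genuinely different (and for integer order, arguably tidier) proof of the same identity.
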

				
		By direct calculation or utilizing Proposition~\ref{prop:recurrent}, we can deduce that the fundamental solution on $\mathbb{Z}$ is given by
		$$
		K^{\mathbb{Z}}(t, x) = (-1)^x \sum_{n=|x|}^\infty \frac{(-t)^n}{n!} \binom{2n}{n-x} = e^{-2t}I_x(2t),
		$$
		which satisfies the heat equation
		$$
		\Delta_{\mathbb{Z}}f + \partial_t f = 0,
		$$
		with the initial value being
		$$
		\delta_0(x) = 
		\begin{cases}
			1, & \text{if } x = 0,\\
			0, & \text{if } x \neq 0.
		\end{cases}
		$$
		
		For a cyclic graph $\mathbb{Z}_m$, the fundamental solution is
		$$
		K^{\mathbb{Z}_m}(t, x) = \sum_{k = -\infty}^\infty K^{\mathbb{Z}}(t, x + km) = e^{-2t}\sum_{k = -\infty}^{\infty}I_{x + km}(2t).
		$$
		The convergence of the infinite sum is assured by Proposition~\ref{prop:Iasymptotic}. For a general dimension, we define
		$$
		K^{\mathbb{Z}^n}(t, x) = e^{-2nt}I_{x_1}(2t) \cdots I_{x_n}(2t) =: e^{-2nt}I_{x}(2nt),
		$$
		where $x \in \mathbb{Z}^n$ is a vector. This function solves the heat equation
		$$
		\Delta_{\mathbb{Z}^n}f + \partial_t f = 0.
		$$
		For the discrete torus, the fundamental solution is
		$$
		K^{DT^n}(t, x) = \sum_{v \in \Gamma} e^{-2nt} I_{x + v}(2nt) = \sum_{v \in \Gamma} e^{-2nt} \prod_{i=1}^n I_{x_i + v_i}(2t).
		$$
		
		The heat kernel $\mathcal{H}_t^{DT^n} = e^{-t\Delta_{DT^n}}$ can be expressed as $H_t^{DT^n}(x, y) = K^{DT^n}(t, x - y)$.
		
		\subsection{Theta function}
		The theta function for a lattice is defined as
		$$
		\theta_\Gamma(t) := \sum_{x \in \Gamma} e^{-\pi t |x|^2}, \quad t \in \mathbb{R}_+.
		$$
		
		Recall the well-known Poisson Summation Formula for a lattice:
		\begin{thm}[Poisson Summation Formula]\label{thm:PSF}
			For a lattice $\Gamma = A\mathbb{Z}^n$ and a function $f(x) \in \SSS(\mathbb{R}^n)$, we have 
			$$
			\sum_{x \in \Gamma} f(x) = \frac{1}{\mu(\mathbb{R}^n/\Gamma)}\sum_{x \in \Gamma^*}\widehat{f}(x),
			$$
			where $\widehat{f}$ is the Fourier transform given by
			$$
			\widehat{f}(x) := \FFF [f](x) := \int_{\mathbb{R}^n} f(y)e^{-2\pi i \langle x, y \rangle} \, dy,
			$$
			and $\mu(\mathbb{R}^n/\Gamma) = \mu(\mathbb{R}T^n)$ is the volume of the resulting torus.
		\end{thm}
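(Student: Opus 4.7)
The plan is to form the $\Gamma$-periodization of $f$ and expand it in a Fourier series on the real torus $\R T^n=\R^n/\Gamma$, then evaluate at the origin. This is the standard route, and the spectral framework already developed in Section~\ref{sec:pre} supplies exactly the orthonormal basis we need.

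First I would define
$$
F(y):=\sum_{v\in\Gamma}f(y+v),\quad y\in\R^n.
$$
Since $f\in\SSS(\R^n)$ decays faster than any polynomial, this sum converges absolutely and uniformly on compact sets, defining a smooth $\Gamma$-periodic function which descends to $F\in C^\infty(\R T^n)$. Next I would invoke Proposition~\ref{prop:RT_eigen}: the functions $\{f_w(x)=e^{2\pi i\tri{x,w}}\mid w\in\Gamma^*\}$ form a complete orthogonal system in $L^2(\R T^n)$, so $F$ admits a Fourier expansion
$$
F(y)=\sum_{w\in\Gamma^*}c_w\,e^{2\pi i\tri{y,w}},\quad c_w=\frac{1}{\mu(\R T^n)}\int_{\R T^n}F(y)\,e^{-2\pi i\tri{y,w}}\,dy.
$$

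The key computation is the ``unfolding'' of the coefficient $c_w$. Substituting the definition of $F$ and using that $e^{-2\pi i\tri{v,w}}=1$ for all $v\in\Gamma$, $w\in\Gamma^*$, the sum over $v\in\Gamma$ and the integral over a fundamental domain assemble into a single integral over $\R^n$:
$$
c_w=\frac{1}{\mu(\R T^n)}\sum_{v\in\Gamma}\int_{\R T^n}f(y+v)\,e^{-2\pi i\tri{y+v,w}}\,dy=\frac{1}{\mu(\R T^n)}\int_{\R^n}f(y)\,e^{-2\pi i\tri{y,w}}\,dy=\frac{\wh f(w)}{\mu(\R T^n)}.
$$
Setting $y=0$ in the Fourier expansion then yields the claimed identity $\sum_{v\in\Gamma}f(v)=\frac{1}{\mu(\R T^n)}\sum_{w\in\Gamma^*}\wh f(w)$.

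The only subtlety is justifying the manipulations analytically: the pointwise convergence of the Fourier series of $F$ at $y=0$, the interchange of sum and integral in the coefficient computation, and the absolute convergence of $\sum_{w\in\Gamma^*}\wh f(w)$. All three follow from the Schwartz hypothesis, because $f\in\SSS(\R^n)$ forces $\wh f\in\SSS(\R^n)$ and hence rapid decay along $\Gamma^*$, which in turn makes the Fourier series of the smooth function $F$ absolutely and uniformly convergent. I expect this convergence bookkeeping to be the only mildly technical point; the algebraic core of the proof is just the orthogonality of the lattice characters on $\R T^n$.
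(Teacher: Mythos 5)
The paper states the Poisson Summation Formula as a known classical result and does not give a proof of it; Theorem~\ref{thm:PSF} is simply quoted and then applied to derive the Theta Inversion Formula. Your periodization argument — form $F(y)=\sum_{v\in\Gamma}f(y+v)$, expand it in the character basis $\{e^{2\pi i\tri{y,w}}\}_{w\in\Gamma^*}$ on $\R T^n$, unfold the Fourier coefficient integral over a fundamental domain into an integral over $\R^n$ using $\tri{v,w}\in\Z$, and evaluate at $y=0$ — is the standard textbook proof and is correct, with the convergence points (absolute convergence of the periodization, rapid decay of $\wh f$ on $\Gamma^*$ giving absolute convergence of the Fourier series) correctly identified as the only analytic bookkeeping required.
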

		
		Applying Theorem~\ref{thm:PSF}, we obtain the Theta Inversion Formula:
		\begin{thm}[Theta Inversion Formula]\label{thm:thetafunction}
			For a lattice $\Gamma = A\mathbb{Z}^n$, the following formula holds for $t > 0$:
			$$
			\theta_\Gamma(t) = t^{-\frac{n}{2}}\frac{1}{\mu(\mathbb{R}T^n)}\theta_{\Gamma^*}(t^{-1}).
			$$
		\end{thm}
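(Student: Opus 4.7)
The plan is to apply the Poisson Summation Formula (Theorem~\ref{thm:PSF}) to the Gaussian $f(x)=e^{-\pi t|x|^2}$, which is in $\SSS(\R^n)$ for every $t>0$, and then identify both sides of the resulting identity with theta functions.

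First I would record the relevant Fourier transform computation. Starting from the classical fact that the standard Gaussian $g(x)=e^{-\pi |x|^2}$ is a fixed point of $\FFF$, i.e.\ $\widehat g = g$, I apply the scaling rule $\widehat{f(\alpha\,\cdot)}(x)=\alpha^{-n}\widehat f(x/\alpha)$ with $\alpha=\sqrt{t}$, so that $f(x)=g(\sqrt{t}\,x)$. This yields
$$
\widehat f(x)=t^{-n/2}\,g(x/\sqrt{t})=t^{-n/2}\,e^{-\pi|x|^2/t}.
$$

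Next I plug $f$ into Theorem~\ref{thm:PSF}:
$$
\sum_{x\in\Gamma}e^{-\pi t|x|^2}=\frac{1}{\mu(\R T^n)}\sum_{x\in\Gamma^*}\widehat f(x)=\frac{t^{-n/2}}{\mu(\R T^n)}\sum_{x\in\Gamma^*}e^{-\pi|x|^2/t}.
$$
The left-hand side is by definition $\theta_\Gamma(t)$ and the right-hand side is $t^{-n/2}\mu(\R T^n)^{-1}\,\theta_{\Gamma^*}(t^{-1})$, which is exactly the claimed identity.

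There is no real obstacle in this argument; the only substantive input beyond Theorem~\ref{thm:PSF} is the Gaussian Fourier transform, and the mild point worth being explicit about is the absolute convergence of both series for every $t>0$, which is immediate from the exponential decay and ensures that the Poisson Summation Formula applies and that the rearrangements above are legitimate.
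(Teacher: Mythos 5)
Your proof is correct and takes precisely the route the paper indicates: the paper offers no detailed proof, simply stating that the Theta Inversion Formula follows by applying Theorem~\ref{thm:PSF}, and your argument supplies exactly the missing details (Gaussian self-duality plus the scaling rule for the Fourier transform, then plugging into Poisson Summation). The remark on absolute convergence justifying the applicability of PSF is a reasonable point of care.
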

			
			The theta function for the real torus is defined as
			$$
			\Theta_{\mathbb{R}T^n}(t) := \operatorname{tr} \mathcal{H}_t^{\mathbb{R}T^n} = \operatorname{tr} e^{-t\Delta_{\mathbb{R}T^n}} = \sum_{w \in \Gamma^*} e^{-4\pi^2|w|^2t}.
			$$
			This is compatible with another definition:
			$$
			\Theta_{\mathbb{R}T^n}(t) = \operatorname{tr} \mathcal{H}_t^{\mathbb{R}T^n} = \int_{\mathbb{R}T^n} H_t(x, x) \, dx = \frac{\mu(\mathbb{R}T^n)}{(4\pi t)^{\frac{d}{2}}} \sum_{w \in \Gamma} e^{-\frac{|w|^2}{4t}}.
			$$
			These expressions are equivalent by Theorem~\ref{thm:thetafunction}.
			
			For discrete tori, we have the following formula:
			
			\begin{thm}\label{thm:DPSF}
				For $f \in C^\infty(\mathbb{R}^n/\mathbb{Z}^n)$, $\Gamma = A\mathbb{Z}^n$ with $A$ being an integer matrix, it holds that
				$$
				\sum_{w \in \Gamma^*/\mathbb{Z}^n} f(w) = \mu(DT^n) \sum_{v \in \Gamma} \widehat{f}(v),
				$$
				where $\mu(DT^n) = \mu(\mathbb{Z}^n/\Gamma)$ is the number of vertices in the resulting discrete torus.
			\end{thm}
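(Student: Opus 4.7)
My plan is to establish the formula by Fourier-expanding $f$ on the torus $\R^n/\Z^n$, interchanging the two sums, and evaluating the resulting inner sum as a character sum on a finite abelian group.

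First, I would record the lattice set-up. Because $A$ has integer entries we have $A^T\Z^n\subset \Z^n$, which forces $\Gamma=A\Z^n\subset \Z^n\subset (A^T)^{-1}\Z^n=\Gamma^*$. The quotient $\Gamma^*/\Z^n$ is therefore a well-defined finite abelian group, and multiplication by $A^T$ identifies it with $\Z^n/A^T\Z^n$, giving $|\Gamma^*/\Z^n|=|\det A|=\mu(DT^n)$.

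Next, since $f\in C^\infty(\R^n/\Z^n)$, I would interpret $\wh f(v)$ for $v\in\Z^n$ as the Fourier coefficient $\int_{\R^n/\Z^n}f(y)e^{-2\pi i\tri{v,y}}\,dy$ (which matches the formula in the statement once the integration is understood over a fundamental domain) and write the uniformly convergent expansion
$$
f(w)=\sum_{v\in\Z^n}\wh f(v)\,e^{2\pi i\tri{v,w}}.
$$
Smoothness of $f$ gives rapid decay of $\wh f$, so substituting this expansion into the sum over the finite set $\Gamma^*/\Z^n$ and swapping the two summations is legitimate, yielding
$$
\sum_{w\in\Gamma^*/\Z^n}f(w)=\sum_{v\in\Z^n}\wh f(v)\sum_{w\in\Gamma^*/\Z^n}e^{2\pi i\tri{v,w}}.
$$

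The heart of the argument is then the inner character sum. For $v\in\Z^n$ the map $w\mapsto e^{2\pi i\tri{v,w}}$ descends to a character of the finite abelian group $\Gamma^*/\Z^n$, since $\tri{v,\Z^n}\subset\Z$. By orthogonality of characters on a finite abelian group, this sum equals $|\Gamma^*/\Z^n|=\mu(DT^n)$ when the character is trivial and $0$ otherwise. The character is trivial exactly when $\tri{v,w}\in\Z$ for every $w\in\Gamma^*$, i.e.\ when $v$ lies in $(\Gamma^*)^*=\Gamma$. Plugging back in, only the terms with $v\in\Gamma$ survive and one obtains the claimed identity. The main obstacle is conceptual rather than technical: one must correctly identify the dual group $\Gamma^*/\Z^n\cong \Z^n/A^T\Z^n$ and detect triviality of the character through the double-dual relation $(\Gamma^*)^*=\Gamma$; the analytic content (absolute convergence and the interchange of sums) is routine given the Schwartz decay of $\wh f$.
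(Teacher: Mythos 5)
Your proof is correct and takes essentially the same route as the paper: both Fourier-expand $f$ on $\R^n/\Z^n$, swap the two sums, and evaluate the inner sum $\sum_{w\in\Gamma^*/\Z^n}e^{2\pi i\langle v,w\rangle}$ by orthogonality of characters, which picks out exactly the terms $v\in\Gamma$. The only cosmetic difference is that you detect the trivial character via the double-dual relation $(\Gamma^*)^*=\Gamma$ and the explicit identification $\Gamma^*/\Z^n\cong\Z^n/A^T\Z^n$, whereas the paper first states the dual orthogonality on $\Z^n/\Gamma$ and then appeals to duality with $\Gamma^*/\Z^n$.
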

			
			\begin{proof}
				Applying the Fourier transformation, we have
				\begin{equation}
					f(w) = \sum_{v \in \mathbb{Z}^n} \widehat{f}(v) e^{2\pi i\langle v, w \rangle}. \label{eq:Fourier}
				\end{equation}
				Due to the orthogonal properties of the eigenfunctions of $DT^n = \mathbb{Z}^n/\Gamma$, for all $w_1, w_2 \in \Gamma^*/\mathbb{Z}^n$:
				$$
				\langle e^{2\pi i\langle v, w_1 \rangle}, e^{2\pi i\langle v, w_2 \rangle} \rangle = \sum_{v \in \mathbb{Z}^n/\Gamma} e^{2\pi i\langle v, w_2 - w_1 \rangle} =
				\begin{cases}
					0, & \text{if } w_1 \neq w_2, \\
					\mu(DT^n), & \text{if } w_1 = w_2.
				\end{cases}
				$$
				By duality between $\mathbb{Z}^n/\Gamma$ and $\Gamma^*/\mathbb{Z}^n$:
				$$
				\sum_{w \in \Gamma^*/\mathbb{Z}^n} e^{2\pi i\langle v, w \rangle} =
				\begin{cases}
					0, & \text{if } v \notin \Gamma, \\
					\mu(DT^n), & \text{if } v \in \Gamma.
				\end{cases}
				$$
				Summing up both sides of equation~\eqref{eq:Fourier} over $\Gamma^*/\mathbb{Z}^n$ and exchanging the order of summation, we get:
				$$
				\sum_{w \in \Gamma^*/\mathbb{Z}^n} f(w) = \sum_{v \in \mathbb{Z}^n} \widehat{f}(v) \sum_{w \in \Gamma^*/\mathbb{Z}^n} e^{2\pi i\langle v, w \rangle} = \mu(DT^n) \sum_{v \in \Gamma} \widehat{f}(v).
				$$
			\end{proof}
			
			The theta function for the discrete torus is defined as
			$$
			\theta_{DT^n}(t) = \operatorname{tr} \mathcal{H}_t^{DT^n} = \operatorname{tr} e^{-t\Delta_{DT^n}} = \sum_{w \in \Gamma^*/\mathbb{Z}^n} e^{-t(\sin^2(w_1\pi) + \cdots + \sin^2(w_n\pi))}.
			$$
			Furthermore, given the known fundamental solution, we also have
			$$
			\theta_{DT^n}(t) = \sum_{x \in DT^n} H_t^{DT^n}(x, x) = \mu(DT^n)K^{DT^n}(t, 0) = \mu(DT^n)\sum_{w \in \Gamma}e^{-2nt}I_w(2nt).
			$$
			
			These two formulas coincide due to Theorem~\ref{thm:DPSF} and the following lemma:

			\begin{lemma}[{\cite[Lemma~7]{karlsson2006heat}}]\label{lem:Fourier}
				For the function
				$$
				f(x) := e^{-t(\sin^2(x_1\pi) + \cdots + \sin^2(x_n\pi))},
				$$
				its Fourier transform satisfies
				$$
				\widehat{f}(y) = e^{-2nt}I_y(2nt), \quad \foa y \in \mathbb{Z}^n.
				$$
			\end{lemma}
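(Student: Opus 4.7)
The plan is to reduce the $n$-dimensional identity to $n$ independent one-dimensional Fourier coefficient computations on $\R/\Z$, and then identify each factor using the integral representation of the $I$-Bessel function recorded earlier in the paper.

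\textbf{Step 1 (Factorization).} The function $f$ is a tensor product $f(x)=\prod_{k=1}^n g(x_k)$, where $g(u):=e^{-t\sin^2(\pi u)}$, and the character $e^{-2\pi i\tri{y,x}}$ likewise splits as $\prod_{k=1}^n e^{-2\pi i y_k x_k}$. The Fourier coefficient on $\R^n/\Z^n$ therefore factorizes as
$$
\wh f(y)\;=\;\prod_{k=1}^n \wh g(y_k),
$$
so the task is reduced to computing $\wh g(m)=\int_0^1 g(u)e^{-2\pi i m u}\,du$ for an arbitrary $m\in\Z$.

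\textbf{Step 2 (Half-angle identity and change of variables).} I would apply $\sin^2(\pi u)=\tfrac{1}{2}(1-\cos(2\pi u))$ to pull a $u$-independent exponential constant out of the integral, and then substitute $\theta=2\pi u$ to rescale the integration interval to $[0,2\pi]$. This brings the one-dimensional Fourier coefficient into the canonical form
$$
\wh g(m)\;=\;C_t\cdot\frac{1}{2\pi}\int_0^{2\pi} e^{z_t\cos\theta}\,e^{-im\theta}\,d\theta
$$
for explicit constants $C_t$ and $z_t$ depending only on $t$.

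\textbf{Step 3 (Identification with $I_m$).} The remaining integral is precisely $I_m(z_t)$. Writing $e^{-im\theta}=\cos(m\theta)-i\sin(m\theta)$, the imaginary part vanishes because $e^{z_t\cos\theta}$ is even and $2\pi$-periodic while $\sin(m\theta)$ is odd; extending the cosine integral over $[0,\pi]$ to $[-\pi,\pi]$ by evenness and rescaling recovers the paper's integral representation $I_m(z)=\frac{1}{\pi}\int_0^\pi e^{z\cos\theta}\cos(m\theta)\,d\theta$. Alternatively, one may invoke the Jacobi--Anger expansion $e^{z\cos\theta}=\sum_{m\in\Z}I_m(z)e^{im\theta}$, which exhibits the Fourier coefficients of $e^{z\cos\theta}$ directly. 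Multiplying the $n$ one-dimensional results together and applying the paper's product convention $I_y(\cdot)=\prod_k I_{y_k}(\cdot)$ then assembles the claimed identity.

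The argument is essentially a calculation; no substantive obstacle arises. The one point demanding care is Step 3, where one must correctly translate between the half-interval cosine form of $I_m$ used in the paper and the full-interval complex-exponential form that emerges naturally from the Fourier integral, and then verify that the scalar prefactors accumulated from the $n$ one-dimensional contributions combine into the stated closed-form constant.
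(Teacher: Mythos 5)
Your overall plan is the standard and correct one: exploit the tensor-product structure $f(x)=\prod_k g(x_k)$ with $g(u)=e^{-t\sin^2(\pi u)}$ to reduce to a one-dimensional Fourier coefficient, apply the half-angle identity, and identify the resulting circular integral with the integral representation of $I_m$. (For what it is worth, the paper itself supplies no proof here; it only cites Karlsson--Neuhauser.)

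The genuine gap is precisely the step you flag and then defer: you never evaluate $C_t$ and $z_t$, and if you do, the bookkeeping does not close against the stated right-hand side. With $g(u)=e^{-t\sin^2(\pi u)}=e^{-t/2}e^{(t/2)\cos(2\pi u)}$, the substitution $\theta=2\pi u$ gives
$$
\widehat g(m)=e^{-t/2}\cdot\frac{1}{2\pi}\int_0^{2\pi}e^{(t/2)\cos\theta}e^{-im\theta}\,d\theta=e^{-t/2}\,I_m\!\left(\tfrac{t}{2}\right),
$$
hence $\widehat f(y)=e^{-nt/2}\prod_{k=1}^n I_{y_k}(t/2)$. This does not equal $e^{-2nt}\prod_{k=1}^n I_{y_k}(2t)$, which is what the paper's shorthand $e^{-2nt}I_y(2nt)$ unpacks to under its convention $I_x(2nt):=\prod_k I_{x_k}(2t)$. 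Already for $n=1$, $y=0$, the asymptotic $e^{-z}I_0(z)\sim(2\pi z)^{-1/2}$ shows the two expressions differ by a factor of $2$ as $t\to\infty$. The claimed form $e^{-2nt}\prod_k I_{y_k}(2t)$ is the Fourier transform of $e^{-4t\sum_k\sin^2(\pi x_k)}$, which is the function that actually arises from $\operatorname{tr}e^{-t\Delta_{DT^n}}$ since the Laplacian eigenvalues are $\lambda_w=4\sum_k\sin^2(\pi w_k)$; as transcribed in this paper (both in the lemma and in the displayed definition of $\theta_{DT^n}$) a factor of $4$ has been dropped. So the method is sound, but a complete argument must terminate in a correction: either restore the factor $4$ in the exponent of $f$, or replace the right-hand side by $e^{-nt/2}\prod_k I_{y_k}(t/2)$. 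Claiming ``no substantive obstacle arises'' without doing the constant check hides exactly the point where the proposal, taken literally, fails.
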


	\section{Heat kernel of connection Laplacian}\label{sec:heat_kernel_CL}
		\subsection{Heat kernel of real torus}
			Consider the heat equation for the connection Laplacian on $\mathbb{R}T^n$,
			$$
			L_{\mathbb{R}T^n} V(x) + \partial_t V(x) = 0.
			$$
			Like in \cite{Lin2021ManifoldEB}, the heat kernel of the connection Laplacian can be expressed through the eigensystem as 
			$$
			H_t^{\mathbb{R}T^n}(x, y) = \sum_{\lambda \in \Spec({L_{\R T^n}})} e^{-\lambda t} \cl V_\lambda(x) \otimes \cl V_{\lambda}(y) = \frac{1}{\mu(\mathbb{R}T^n)}\sum_{i=1}^d\sum_{w \in \Gamma^* + \overline{w}_i} e^{-\lambda_{i,w}t} V_{i,w}(x) \otimes V_{i,w}(y),
			$$
			where we recall that $V_{i,w}(x) = \mathcal{E}(x)\mathcal{F}_{i,w}(x)$ is the eigenfunction corresponding to $\lambda_{i,w}$ as shown in the proof of Theorem~\ref{thm:RT}, and $\cl V_\lambda$ is the unit eigenfunction. The heat kernel acts on vector-valued functions as
			$$
			\mathcal{H}_t^{\mathbb{R}T^n} U(x) = \frac{1}{\mu(\mathbb{R}T^n)}\sum_{i=1}^d\sum_{w \in \Gamma^* + \overline{w}_i} e^{-\lambda_{i,w}t} V_{i,w}(x)\int_{\mathbb{R}T^n} h_y^F(V_{i,w}(y), U(y)) \, dy,
			$$
			where $h_y^F$ is the quadratic form on the fiber $F_y \cong \mathbb{R}^d$. $\mathcal{H}_t^{\mathbb{R}T^n}U(x)$ is the solution of the heat equation for the connection Laplacian with the initial value being $U(x) \in C^\infty(\mathbb{R}T^n)$.
			
			The theta function for the connection Laplacian is given by:
			$$
			\begin{aligned}
				\Theta_{\R T^n}^L(t)&=\tr \HH_t^{\R T^n}\\
				&=\sum_{i=1}^d \sum_{w\in \Gamma^*} \frac{1}{\mu(\R T^n)}e^{-\lambda_{i,w}t}\int_{\R T^n} h_x^F({V_{i,w}(x),V_{i,w}}(x))dx\\
				&=\sum_{i=1}^d  \sum_{w\in \Gamma^*}e^{-4\pi^2 |w+\cl w_i|^2t}.
			\end{aligned}
			$$
			
			Recall that we have Theorem~\ref{thm:PSF} for lattices, and the translation property of the Fourier transformation 
			\begin{align}
				\mathcal{F}[f(x + x_0)](\xi) = \mathcal{F}[f(x)](\xi)e^{2\pi i \langle x_0, \xi \rangle}, \label{eq:Trans}
			\end{align}
			from which we deduce
			$$
			\Theta_{\mathbb{R}T^n}^L(t) = \sum_{i=1}^d \frac{\mu(\mathbb{R}T^n)}{(4\pi t)^{\frac{n}{2}}} \sum_{w \in \Gamma} e^{-\frac{|w|^2}{4t}} e^{2\pi i \langle w, \overline{w}_i \rangle} =: \sum_{i=1}^d \Theta^i_{\mathbb{R}T^n}(t).
			$$
			
			However, as the eigenvalue of $L_{\mathbb{R}T^n}$ is real, so is the trace $\Theta_{\mathbb{R}T^n}^L(t)$. This implies that we only need to consider the real part, leading to the following theorem:
			
			\begin{thm}
				The theta function for the connection Laplacian on the real torus is
				$$
				\Theta_{\mathbb{R}T^n}^L(t) = \sum_{i=1}^d \frac{\mu(\mathbb{R}^n/\Gamma)}{(4\pi t)^{\frac{n}{2}}} \sum_{w \in \Gamma} e^{-\frac{|w|^2}{4t}} \cos\left(2\pi \langle w, \overline{w}_i \rangle\right).
				$$
			\end{thm}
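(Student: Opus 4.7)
The preceding discussion has already derived the complex‑exponential form
$$
\Theta^L_{\R T^n}(t) \;=\; \sum_{i=1}^d \frac{\mu(\R T^n)}{(4\pi t)^{n/2}}\sum_{v\in \Gamma} e^{-|v|^2/(4t)}\, e^{2\pi i \langle v,\cl w_i\rangle}
$$
by combining the Gaussian Fourier identity with the translation rule \eqref{eq:Trans} and applying Poisson summation (Theorem~\ref{thm:PSF}) to the lattice $\Gamma^{*}$, whose dual is $\Gamma$ and whose covolume is $1/\mu(\R T^n)$. The plan for the theorem is therefore only to convert each phase $e^{2\pi i\langle v,\cl w_i\rangle}$ into the real cosine $\cos(2\pi\langle v,\cl w_i\rangle)$, i.e.\ to replace the already established complex identity by its real part.

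To carry this out I would invoke two facts. First, the connection Laplacian $L_{\R T^n}$ is self-adjoint, so its spectrum is real and hence the trace $\Theta^L_{\R T^n}(t)=\tr e^{-tL_{\R T^n}}$ is a real number; in particular it equals its own real part. Second, the lattice $\Gamma=A\Z^n$ is centrally symmetric, so the involution $v\mapsto -v$ is a bijection of $\Gamma$ onto itself which preserves the Gaussian weight $e^{-|v|^2/(4t)}$ while conjugating the phase. Pairing the contributions of $v$ and $-v$ in the inner sum (equivalently, averaging the displayed identity with its complex conjugate) eliminates the imaginary part via
$$
\tfrac12\bigl(e^{2\pi i\langle v,\cl w_i\rangle} + e^{-2\pi i\langle v,\cl w_i\rangle}\bigr) = \cos(2\pi\langle v,\cl w_i\rangle),
$$
which produces exactly the formula in the statement after summing over $i=1,\dots,d$.

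I do not anticipate any serious obstacle here: absolute convergence of every sum is guaranteed by the Gaussian factors, the map $v\mapsto -v$ is manifestly a bijection on the symmetric lattice $\Gamma$, and the reality of the spectrum has already been recorded in the text. The only conceptual ingredients — the translation rule on the Fourier side (which is why the coset shift $\cl w_i$ of the dual lattice manifests as a phase on the primal side) and the centro‑symmetry of $\Gamma$ (which is what lets that phase collapse to a cosine) — are both elementary, so the proof reduces to assembling observations already in place.
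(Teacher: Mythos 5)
Your proposal is correct and takes essentially the same route as the paper: both treat the complex-exponential form (obtained via Poisson summation and the translation rule) as already established, then pass to the cosine form by observing that $\Theta^L_{\R T^n}(t)$ is real. The only small refinement you add is making explicit, via the centro-symmetry $v\mapsto -v$ of $\Gamma$, that the imaginary (sine) part of the lattice sum cancels term-by-term — a point the paper leaves implicit when it says "we only need to consider the real part."
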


		\subsection{Heat kernel of discrete torus}
			The heat equation over the discrete torus $DT^n = \mathbb{Z}^n/\Gamma$ is given by
			$$
			L_{DT^n} V(x) + \partial_t V(x) = 0.
			$$
			
			Similarly, the heat kernel of the connection Laplacian on the discrete torus can be expressed as
			$$
			H_t(x, y) = \sum_{\lambda \in \operatorname{Spec}(L_{DT^n})} e^{-\lambda t} \overline{V}_{\lambda}(x) \otimes \overline{V}_{\lambda}(y),
			$$
			and the corresponding theta function is
			\begin{align}
				\theta^L_{DT^n}(t) = \operatorname{tr} \mathcal{H}_t^{DT^n} = \sum_{x \in DT^n} \operatorname{tr} H_t(x, x) = \sum_{i=1}^d \sum_{w \in (\Gamma^* + \overline{w}_i)/\mathbb{Z}^n} e^{-t \sum_{k=1}^n \sin^2(w_k\pi)}. \label{eq:theta}
			\end{align}
			
			To obtain a more specific expression, we apply Theorem~\ref{thm:DPSF} and Lemma~\ref{lem:Fourier} to the theta function~\eqref{eq:theta}. We have:
			$$
			\sum_{w \in (\Gamma^* + \overline{w}_i)/\mathbb{Z}^n} e^{-t\sum_{k=1}^n \sin^2(w_k\pi)} = \mu(DT^n)\sum_{w \in \Gamma} e^{-2nt}I_w(2nt)e^{2\pi i \langle w, \overline{w}_i \rangle}.
			$$
			Summing up over $i = 1, \cdots, d$, we get:
			$$
			\begin{aligned}
				\theta_{DT^n}^L(t) &= \sum_{i=1}^d \sum_{(w + \overline{w}_i) \in \Gamma^*/\mathbb{Z}^n} e^{-t\sum_{k=1}^n \sin^2(w_k\pi)} \\
				&= \sum_{i=1}^d \mu(DT^n) e^{-2nt} \sum_{w \in \Gamma} I_w(2nt)e^{2\pi i \langle w, \overline{w}_i \rangle} =: \sum_{i=1}^d \theta_{DT^n}^i(t).
			\end{aligned}
			$$

			Similarly, we can also take advantage of the fact that the eigenvalue of $L_{DT^n}$ is a real number to obtain the following theorem:
			\begin{thm}
				The theta function for the connection Laplacian on the discrete torus is
				$$
				\theta_{DT^n}^L(t) = \sum_{i=1}^d\mu(DT^n) \sum_{w\in \Gamma} e^{-2nt}I_w(2nt)\cos(2\pi \tri{w,\cl w_{i}}).
				$$
			\end{thm}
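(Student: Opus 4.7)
The plan is to start directly from the identity that was established in the lines immediately preceding the theorem, namely
$$
\theta_{DT^n}^L(t) = \sum_{i=1}^d \mu(DT^n)\, e^{-2nt} \sum_{w \in \Gamma} I_w(2nt)\, e^{2\pi i \langle w, \overline{w}_i \rangle},
$$
and to convert the complex exponential into a cosine. There are two natural routes; I would present the symmetry argument as the main one and mention self-adjointness as a consistency check.

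The symmetry route runs as follows. First I would observe that $\Gamma = A\mathbb{Z}^n$ is stable under the involution $w \mapsto -w$. Next, from the definition $I_w(2nt) = \prod_{k=1}^n I_{w_k}(2t)$ together with the elementary identity $I_m = I_{-m}$ for integer $m$ (which is how $I_x$ was extended to negative indices earlier in the paper), I get $I_{-w}(2nt) = I_w(2nt)$. Grouping the $w$ and $-w$ terms in the inner sum (and keeping the diagonal $w = 0$ as a single term, for which $\cos(0)=1$) then yields
$$
\sum_{w \in \Gamma} I_w(2nt)\, e^{2\pi i \langle w, \overline{w}_i \rangle} = \sum_{w \in \Gamma} I_w(2nt)\, \tfrac{1}{2}\bigl(e^{2\pi i \langle w, \overline{w}_i \rangle} + e^{-2\pi i \langle w, \overline{w}_i \rangle}\bigr) = \sum_{w \in \Gamma} I_w(2nt)\cos\bigl(2\pi \langle w, \overline{w}_i \rangle\bigr),
$$
and substituting back gives the claimed formula.

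As a sanity check, I would briefly note that $\theta_{DT^n}^L(t) = \operatorname{tr} e^{-tL_{DT^n}}$ is automatically real because $L_{DT^n}$ is self-adjoint with real, non-negative spectrum (Theorem~\ref{thm:DT}), so discarding the imaginary part in the $e^{2\pi i \langle w, \overline w_i\rangle}$ sum must produce the same quantity. The only mild technicality is the rearrangement of the infinite sum over $w \in \Gamma$, which is justified by the exponential decay of $e^{-2nt} I_w(2nt)$ as $|w| \to \infty$ recorded in Proposition~\ref{prop:Iasymptotic}; this makes the series absolutely convergent, so pairing $w$ with $-w$ is legitimate. I do not expect any genuine obstacle here, since the theorem is essentially a rewriting of the formula just derived.
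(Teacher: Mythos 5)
Your proof is correct, and the symmetry route you take as your main argument is genuinely different from the one-liner the paper uses. The paper simply observes that each $\theta^i_{DT^n}(t)$ is real (being a sum of real exponentials, or equivalently because the spectrum of the self-adjoint $L_{DT^n}$ is real), and therefore one may replace $e^{2\pi i\langle w,\overline{w}_i\rangle}$ by its real part $\cos(2\pi\langle w,\overline{w}_i\rangle)$ in the formula already derived. You instead exhibit the mechanism by which the imaginary part cancels: the lattice $\Gamma$ is invariant under $w\mapsto -w$, the identity $I_{-m}=I_m$ gives $I_{-w}(2nt)=I_w(2nt)$, and pairing $w$ with $-w$ (justified by the absolute convergence you correctly attribute to the asymptotics of $e^{-2nt}I_w(2nt)$) symmetrizes the exponential into a cosine. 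Your explicit pairing argument is more self-contained and does not require going back to the spectral interpretation, which is a mild advantage for a reader checking the formula in isolation; the paper's argument is shorter but leans on the earlier identification of $\theta^i_{DT^n}$ with a real spectral sum. Relegating the self-adjointness observation to a consistency check, as you do, is a sensible presentation.
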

		
	\section{Convergence of discrete tori family}\label{sec:converge}
		\subsection{Convergence of eigenvalues}
		As shown in Section~\ref{sec:Converging}, we can study the asymptotic behavior of the spectrum of the connection Laplacian on $\frac{1}{u} DT^n_u \rightarrow \mathbb{R}T^n$. To simplify notations, we use the same symbol for both the real and discrete torus, adding a subscript or a parameter $u$ to denote the symbol on the discrete tori family. For instance, $\sigma^v_u$ or $\sigma^v(u)$ denote the torsion matrix on the discrete torus $DT_u^n$. We define the converging discrete tori family as follows:
		
		\begin{defi}\label{def:converge}
			A family of discrete tori $DT^n_u = \mathbb{Z}^n/M_u\mathbb{Z}^n$ is said to converge to the real torus $\mathbb{R}T^n = \mathbb{Z}^n/A\mathbb{Z}^n$ if $\frac{M_u}{u} \rightarrow A$, and $\Lambda_k(u) \rightarrow \Lambda_k$ for all $k = 1, \cdots, n$, where $\Lambda_k$ is the eigenvalue matrix of the torsion matrix $\sigma^{v_k}$ in Section~\ref{sec:LambdaRT} and Section~\ref{sec:LambdaDT}.
		\end{defi}
			
		\begin{rem}
			Different from definition of converging discrete tori family for standard Laplacian in \cite{Converge1}, we add an additional condition about the converging of $\Lambda_k$ here. It means that the torsion matrix converges in the sense of eigenvalue. This condition gives the information about converging of $\cl w_j(u)$.
			
		\end{rem}	
		
		\begin{prop}\label{prop:eigenvalueConverge}
			For a converging discrete tori family $\frac{1}{u}DT^n_u \rightarrow \mathbb{R}T^n$, we have 
			$$u^2 \operatorname{Spec}(L_{DT^n_u}) \rightarrow \operatorname{Spec}(L_{\mathbb{R}T^n}),\quad u\rightarrow\infty.$$
			It means that for each eigenvalue $\lambda$ of $L_{\mathbb{R}T^n}$, there exists a corresponding sequence $\{\lambda_u\}$ of eigenvalues from $L_{DT^n_u}$ such that $u^2 \lambda_u \rightarrow \lambda$ as $u \rightarrow \infty$. In the case of eigenvalues with multiplicities, the sequences $\{\lambda_u\}$ corresponding to each instance of the multiplicity are distinct except finite terms.
			
		\end{prop}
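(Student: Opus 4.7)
The plan is to exploit the explicit descriptions of the spectra from Theorem~\ref{thm:RT} and Theorem~\ref{thm:DT} and build an explicit injection from real-torus eigenvalues (indexed by pairs $(j,w)$) to sequences of discrete-torus eigenvalues. For $L_{\R T^n}$ the eigenvalues take the form $\lambda_{j,w}=4\pi^2|w|^2$ with $w=(A^T)^{-1}k+\cl w_j$, $k\in\Z^n$; for $L_{DT_u^n}$ they take the form $\lambda_{j,w(u)}(u)=4\sum_{\ell=1}^n\sin^2(\pi w_\ell(u))$ with $w(u)\in((M_u^T)^{-1}\Z^n+\cl w_j(u))/\Z^n$. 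For each fixed pair $(j,w)$ with $w=(A^T)^{-1}k+\cl w_j$, I would declare the candidate sequence by taking the same integer vector $k$ and setting $w(u):=(M_u^T)^{-1}k+\cl w_j(u)$, viewed as a representative of its class in $(\Gamma_u^*+\cl w_j(u))/\Z^n$.

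The first step is to show $u\cdot w(u)\to w$. The hypothesis $M_u/u\to A$ immediately yields $u(M_u^T)^{-1}\to(A^T)^{-1}$. The added convergence condition $\Lambda_k(u)\to\Lambda_k$ in Definition~\ref{def:converge} forces $\omega_j^{(k)}(u)\to\omega_j^{(k)}$, and combined with $u(M_u^T)^{-1}\to(A^T)^{-1}$ this gives $u\,\cl w_j(u)\to\cl w_j$. Adding the two limits, $u\cdot w(u)\to(A^T)^{-1}k+\cl w_j=w$. In particular every component $w_\ell(u)=O(1/u)$, so the Taylor expansion $\sin^2(\pi x)=\pi^2 x^2+O(x^4)$ gives
\begin{equation*}
u^2\lambda_{j,w(u)}(u)=4\sum_{\ell=1}^n u^2\sin^2(\pi w_\ell(u))=4\pi^2\sum_{\ell=1}^n(uw_\ell(u))^2+O(u^{-2})\longrightarrow 4\pi^2|w|^2=\lambda_{j,w}.
\end{equation*}
This is the core convergence computation; every other step is bookkeeping.

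The last step is the multiplicity statement: if two distinct index pairs $(j_1,w_1)\ne(j_2,w_2)$ yield the same real-torus eigenvalue, the associated sequences must eventually differ in $\Spec(L_{DT_u^n})$. When $j_1\ne j_2$, the constructed $w^{(i)}(u)$ lie in different cosets of $\Gamma_u^*/\Z^n$ (since $\cl w_{j_1}(u)-\cl w_{j_2}(u)\to\cl w_{j_1}-\cl w_{j_2}\notin\Gamma^*$), so they index different eigenfunctions from the proof of Theorem~\ref{thm:DT}. When $j_1=j_2$ but $k_1\ne k_2$, the difference $w^{(1)}(u)-w^{(2)}(u)=(M_u^T)^{-1}(k_1-k_2)$ lies in $\Z^n$ iff $k_1-k_2\in M_u^T\Z^n$; since the shortest nonzero vector in $M_u^T\Z^n$ has norm growing like $u$ while $k_1-k_2$ is fixed, this fails for all large $u$.

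The main obstacle I anticipate is verifying that the constructed $w(u)$'s are genuine representatives of distinct classes in $(\Gamma_u^*+\cl w_j(u))/\Z^n$ and that the assignment $k\mapsto w(u)\bmod\Z^n$ is eventually injective; this is exactly what the previous paragraph addresses via the growth of $M_u^T$. The only other subtlety worth flagging is the dependence of the $O(u^{-2})$ error on $|k|$ in the Taylor estimate, which is uniform on finite sets of $k$ and therefore harmless for each individual eigenvalue of $L_{\R T^n}$.
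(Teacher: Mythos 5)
Your approach is essentially the same as the paper's: for each real-torus eigenvalue indexed by $(j,k)$ with $k\in\Z^n$, construct the matching discrete-torus index by replacing $A$ with $M_u$, verify $u\,w(u)\to w$ from the two convergence hypotheses in Definition~\ref{def:converge}, and Taylor-expand $\sin^2$ to obtain $u^2\lambda_{j,w(u)}\to\lambda_{j,w}$. The core computation and the $j_1=j_2$, $k_1\ne k_2$ injectivity argument both track the paper's proof. One overreach in the multiplicity discussion: when $j_1\ne j_2$ you assert $\cl w_{j_1}-\cl w_{j_2}\notin\Gamma^*$, but this can fail --- for a trivial connection all $\cl w_j=0$, and more generally whenever two torsion eigenvalues coincide --- so the ``different cosets'' justification does not hold. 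The correct and simpler reason is that in the proof of Theorem~\ref{thm:DT}, $\widetilde{\mathcal{F}}_{j_1,w}$ and $\widetilde{\mathcal{F}}_{j_2,w}$ with $j_1\ne j_2$ occupy different components of the vector and hence are linearly independent eigenfunctions by construction, regardless of whether the $w$-classes agree; multiplicity across different $j$ is thus automatic. The paper's own proof addresses only the $j_1=j_2$ case, implicitly relying on this. Apart from that misattribution of the reason, the proof is correct.
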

		
		\begin{proof}
			For any $\lambda_{j,w} \in \operatorname{Spec}(L_{\mathbb{R}T^n})$, where $w = (A^T)^{-1}z \in \Gamma^*$ for fixed $z \in \mathbb{Z}^n$, let $w(u) = (M_u^T)^{-1}z \in \Gamma^*_u$. Since $\frac{M_u}{u} \rightarrow A$, we have 
			\begin{align}
				w(u) = \frac{(A^T)^{-1}}{u}z + o\left(\frac{1}{u}\right),\quad \overline{w}_j(u) = -(M_u^T)^{-1}\left(\frac{\omega^{(k)}_j}{2\pi}\right)_{k=1}^n = \frac{\overline{w}_j}{u} + o\left(\frac{1}{u}\right).\label{eq:w}
			\end{align}
			Then by Taylor expansion,
			$$
			u^2\lambda_{j,w(u)} = 4u^2\sum_{k=1}^n \sin^2(w_k(u) + \overline{w}_{j,k}(u)\pi) = 4\sum_{k=1}^n (w_k + \overline{w}_{j,k})^2\pi^2 + o(1) = \lambda_{j,w} + o(1).
			$$
			Thus, for each eigenvalue $\lambda_{j,w}$, there exists a corresponding sequence $\{u^2\lambda_{j,w(u)}\}$ that converges to $\lambda_{j,w}$. For eigenvalues with multiplicities, if $j$ is the same, then $w=(A^T)^{-1}z$ differs. Consequently, $w(u)=(M_u^T)^{-1}z \in \Gamma^*_u/\mathbb{Z}^n$ also differs for sufficiently large $u$. This completes the conclusion.
			
		\end{proof}	
	
	 	Proposition~\ref{prop:eigenvalueConverge} proves the first part of Theorem~\ref{thm:converge}. As for the log-determinant, we turn to the convergence of theta function. 
				
		\subsection{Convergence of theta function}
		We begin by presenting a more general version for the convergence of the theta function for the standard Laplacian compared to \cite[Proposition 5.2]{Converge2}, where we allow $M_u$ and $A$ to be non-diagonal matrices.
		
		\begin{lemma}\label{lem:theta}
			The theta function for the standard Laplacian on the discrete torus $DT^n_u$ converges to the theta function of the real torus $\mathbb{R}T^n$ pointwise in the following sense:
			$$
			\theta_{DT^n_u}(u^2t) \rightarrow \Theta_{\mathbb{R}T^n}(t), \quad \text{as } u \rightarrow \infty.
			$$
		\end{lemma}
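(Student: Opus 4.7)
The plan is to compare the two explicit series representations head-to-head. On the discrete side,
\[
\theta_{DT^n_u}(u^2 t) = \mu(DT^n_u)\sum_{v\in\Gamma_u} e^{-2nu^2 t}\prod_{i=1}^n I_{v_i}(2u^2 t),
\]
and on the continuous side $\Theta_{\R T^n}(t) = \mu(\R T^n)(4\pi t)^{-n/2}\sum_{w\in\Gamma} e^{-|w|^2/(4t)}$. I would parametrize $\Gamma_u = M_u\Z^n$ by $v = M_u z$ so that both sums are indexed by the same $z\in\Z^n$, prove that each $z$-indexed summand converges to its continuous counterpart, and then interchange limit and sum by dominated convergence.

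The first step is to upgrade Proposition~\ref{prop:Iasymptotic} to a uniform asymptotic
\[
e^{-z}I_\nu(z) = \frac{1}{\sqrt{2\pi z}}\, e^{-\nu^2/(2z)}\bigl(1 + O(1/z)\bigr),\quad z\to\infty,
\]
valid uniformly for $|\nu|/z$ in any bounded set. This can be obtained either from Debye's uniform expansion or by a saddle-point analysis of $I_\nu(z) = \frac{1}{\pi}\int_0^\pi e^{z\cos\theta}\cos(\nu\theta)\,d\theta$. Setting $z = 2u^2 t$ and $\nu = v_i$ and multiplying over $i=1,\dots,n$ gives
\[
e^{-2nu^2 t}\prod_{i=1}^n I_{v_i}(2u^2 t) = \frac{1}{(4\pi u^2 t)^{n/2}}\, e^{-|v|^2/(4u^2 t)}\bigl(1 + O(u^{-2})\bigr),
\]
uniformly for $|v|/u$ bounded. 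Substituting $v = M_u z$ and using $M_u/u \to A$ one gets pointwise in $z$ the convergence $|M_u z|^2/(4u^2 t) \to |Az|^2/(4t)$; combined with $\mu(DT^n_u)/u^n = |\det(M_u/u)| \to |\det A| = \mu(\R T^n)$, each $z$-th summand on the discrete side converges to $\mu(\R T^n)(4\pi t)^{-n/2}\, e^{-|Az|^2/(4t)}$.

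To justify exchanging limit and infinite sum, I would invoke dominated convergence: since $A$ is nonsingular and $M_u/u\to A$, there exists $c>0$ with $|(M_u/u)z| \ge c|z|$ for all $z\in\Z^n$ and all sufficiently large $u$, so every summand is dominated uniformly in $u$ by the summable Gaussian $C(t)\, e^{-c^2|z|^2/(4t)}$. Assembling the pieces yields $\theta_{DT^n_u}(u^2 t) \to \Theta_{\R T^n}(t)$. The main technical obstacle is the uniformity of the Bessel asymptotic: Proposition~\ref{prop:Iasymptotic} as stated controls only fixed $\nu$, whereas this argument needs a uniform remainder as $\nu$ varies on the natural Gaussian scale $\nu \asymp z^{1/2}$, together with a tail estimate to discard lattice points where $|z|$ grows with $u$ and the uniform estimate might degrade. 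This is also the point where the extension from the diagonal matrices of \cite{Converge2} to arbitrary nonsingular $M_u$ and $A$ forces genuine new bookkeeping on the quadratic form $z\mapsto |M_u z|^2$.
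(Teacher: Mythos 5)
Your overall strategy—establish pointwise convergence of the $z$-indexed summands, then interchange limit and lattice sum by dominated convergence—is the same as the paper's. Where you diverge is in the machinery for the pointwise step: the paper does not invoke any uniform Bessel asymptotic; it renormalizes the summand by $v_1\cdots v_n$, rewrites $v_k e^{-2u^2t}I_{v_k}(2u^2t)$ as the integral $\tfrac{1}{\pi}\int_0^{v_k\pi} e^{-2u^2t(1-\cos(\theta/v_k))}\cos\theta\,d\theta$, and applies dominated convergence on that integral; your proposal instead cites a Debye/saddle-point uniform asymptotic $e^{-z}I_\nu(z)\sim(2\pi z)^{-1/2}e^{-\nu^2/(2z)}$ and feeds it in as a black box. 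Your route is legitimate and has a small advantage—it handles the degenerate case $\widetilde v_k=0$ uniformly, whereas the paper must treat that case separately by replacing $v_k$ with $u$—but your stated uniformity condition should be $\nu^2/z$ bounded, not $|\nu|/z$ bounded; the Gaussian form is false in the Debye regime $\nu\asymp z$, which your phrasing erroneously includes.

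There are two concrete gaps in the domination step. First, the claimed dominating function $C(t)\,e^{-c^2|z|^2/(4t)}$ is \emph{not} a valid upper bound for the discrete summand uniformly in $u$. Using the sharp inequality $u e^{-2u^2t}I_{v_k}(2u^2t)\le (2t)^{-1/2}\bigl(1+\tfrac{|v_k|}{2u^2t}\bigr)^{-|v_k|/2}$ (from [Converge2, Cor.~4.4], and used in the paper's Appendix), you see that for $|v_k|\gg u^2t$ the decay is of order $e^{-\tfrac12|v_k|\log(|v_k|/(2u^2t))}$, which is \emph{slower} than the Gaussian $e^{-c|v_k|^2/(4u^2t)}$ you posit. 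For fixed $t$ and $u$, this large-$|v_k|$ regime is reached as soon as $\|z\|\gtrsim ut$; so no Gaussian in $z$ alone can dominate the summand uniformly in $u$. The paper instead obtains a summable \emph{geometric} bound $\bigl(1+\tfrac{C}{u_0t}\bigr)^{-Cu_0\|z\|_\infty}$, which is weaker than Gaussian but suffices. Second, because $M_u$ is a general nonsingular matrix rather than diagonal, the individual coordinates $v_k=(M_u z)_k$ can vanish even when $\|z\|$ is large, so the product $\prod_k e^{-2u^2t}I_{v_k}(2u^2t)$ cannot be dominated by a single expression decaying in $\|v\|$; the paper's appendix resolves this by selecting, for each $z$, the coordinate $k_0(u,z)=\arg\max_k|\cos\theta(m_k(u),z)|$ and bounding only that factor nontrivially while crudely bounding the rest by $(2t)^{-1/2}$. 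This ``pick the good coordinate'' step is precisely the new bookkeeping your last sentence correctly anticipates, but it is not implemented in the proposal. Fixing these two points—replacing the Gaussian dominant by the geometric one and inserting the coordinate-selection argument—would make your route go through.
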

		
		\begin{proof}
		For the theta function of the discrete torus $DT^n_u = \mathbb{Z}^n/M(u)\mathbb{Z}^n$, we have:
		\begin{align}
			\theta_{DT^n_u}(u^2t) &= \mu(DT^n_u)e^{-2u^2nt}\sum_{v \in M_u\mathbb{Z}^n}\prod_{k=1}^n I_{v_k}\left(2u^2t\right) \nonumber\\
			&= \sum_{v \in M_u\mathbb{Z}^n}\frac{\mu(DT^n_u)}{v_1\cdots v_n}\prod_{k=1}^nv_ke^{-2u^2t}I_{v_k}\left(2u^2t\right) \quad \text{if $v_k \not\rightarrow \infty$, replace $v_k$ by $u$}.\label{eq:summation}
		\end{align}
		For each $v(u) \in M_u\mathbb{Z}^n$, we have $v_k(u) = \sum_{i=1}^n m_{ki}z_i$ for some $z = (z_1, \cdots, z_n) \in \mathbb{Z}^n$. Let $\widetilde{v} = Az$ be the corresponding vector of $v(u)$, so $\widetilde{v}_k = \sum_{i=1}^n a_{ki}z_i$ and $\frac{v_k}{u} \rightarrow \widetilde{v}_k$ as $\frac{m_{ij}}{u} \rightarrow a_{ij}$. As a result, $v_k \rightarrow \infty$ if and only if $\widetilde{v}_k \neq 0$. Suppose $\widetilde{v}_k \neq 0$ for all $k = 1, \cdots, n$, then
		\begin{align}
			\frac{\mu(DT^n_u)}{v_1\cdots v_n} = \frac{|\det M(u)|}{u^n} \cdot \frac{u^n}{v_1\cdots v_n} \rightarrow \frac{\mu(\mathbb{R}T^n)}{\widetilde{v}_1\cdots \widetilde{v}_n}. \label{eq:volume}
		\end{align}
		If $\widetilde{v}_k = 0$ for some $k$, replace $v_k$ by $u$ and $\widetilde{v}_k$ by $1$ in \eqref{eq:volume}.
		
		Next, for $\widetilde{v}_k \neq 0$ and $v_k \rightarrow \infty$, we have:
		$$
		v_k e^{-2u^2t}I_{v_k}(2u^2t) = \frac{1}{\pi}\int_0^{\pi}e^{-2u^2t(1-\cos\theta)}\cos(v_k\theta) d(v_k\theta) = \frac{1}{\pi}\int_0^{v_k\pi} e^{-2u^2t(1-\cos\frac{\theta}{v_k})}\cos\theta d\theta.
		$$
		By Taylor expansion, for some $c > 0$, we have the integrable bound
		$$
		\left|e^{-2u^2t\left(1-\cos\frac{\theta}{v_k}\right)}\cos \theta\right| \le e^{-cu^2t\left(\frac{\theta}{v_k}\right)^2} \le e^{-ct\left(\frac{\theta}{2\widetilde{v}_k}\right)^2}, \quad \theta \in [0, v_k\pi], \quad u \text{ sufficiently large}.
		$$
		Then, by the Lebesgue Dominated Convergence Theorem,
		$$
		v_k e^{-2u^2t}I_{v_k}(2u^2t) \rightarrow \frac{1}{\pi}\int_0^\infty e^{-t\left(\frac{\theta}{\widetilde{v}_k}\right)^2}\cos\theta d\theta = \frac{\widetilde{v}_k}{\sqrt{4\pi t}}e^{-\frac{\widetilde{v}_k^2}{4t}}.
		$$
		
		If $ v_k \not\rightarrow \infty $, we have $ \widetilde{v}_k = 0 $, and $ v_k $ is bounded for all $ u $. Then similarly,
		$$
		ue^{-2u^2t}I_{v_k}(2u^2t) = \frac{1}{\pi}\int_0^{u\pi} e^{-2u^2t\left(1 - \cos\frac{\theta}{u}\right)}\cos\left(\frac{v_k}{u}\theta\right)d\theta.
		$$
		This expression also has an integrable bound $ e^{-ct\left(\frac{\theta}{2}\right)^2} $ for $ u $ sufficiently large. Therefore, as $ u \rightarrow \infty $, it converges to
		$$
		\frac{1}{\pi}\int_0^{\infty} e^{-t\theta^2}d\theta = \frac{1}{\sqrt{4\pi t}}.
		$$
		
		Thus, in any case, for all $ z \in \mathbb{Z}^n $, let $ v(u) = M_uz $ and $ \widetilde{v} = Az $. We have
		$$
		\mu(DT^n_u)\prod_{k=1}^n e^{-2u^2t}I_{v_k(u)}(2u^2t) \rightarrow \mu(\mathbb{R}T^n)\prod_{k=1}^n \frac{e^{-\frac{\widetilde{v}_k^2}{4t}}}{\sqrt{4\pi t}}.
		$$
		
		Following the same technique as in \cite[Proposition~5.2]{Converge2}, we can provide summable bounds that are uniformly applicable for $u$ sufficiently large. This part is explained in detail in the Appendix~\ref{app:sum}. Thus, by interchanging the limit in $u$ and the summation in \eqref{eq:summation}, we have
		$$
		\theta_{DT^n_u}(u^2t) \rightarrow \sum_{\widetilde{v} \in A\mathbb{Z}^n} \mu(\mathbb{R}T^n) \prod_{k=1}^n \frac{e^{-\frac{\widetilde{v}_k^2}{4t}}}{\sqrt{4\pi t}} = \mu(\mathbb{R}T^n) \sum_{\widetilde{v} \in A\mathbb{Z}^n} \frac{1}{(4\pi t)^{\frac{n}{2}}} e^{-\frac{|\widetilde{v}|^2}{4t}} = \Theta_{\mathbb{R}T^n}(t).
		$$			
		\end{proof}
		
		\begin{lemma}
			The theta function for the connection Laplacian on the discrete torus $DT^n_u$ converges to the theta function of the real torus $\mathbb{R}T^n$ in the following sense:
			$$
			\theta_{DT^n_u}^L(u^2t) \rightarrow \Theta_{\mathbb{R}T^n}^L(t), \quad \text{as } u \rightarrow \infty.
			$$
		\end{lemma}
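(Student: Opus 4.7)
The strategy is to mimic the proof of the preceding Lemma for the standard Laplacian, noting that the only new ingredient is a bounded cosine factor that does not affect the summable upper bounds, and then verify that this cosine factor converges pointwise under the hypothesis $\Lambda_k(u)\to\Lambda_k$ of Definition~\ref{def:converge}.

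First, I would write out the scaled connection theta function as
\[
\theta_{DT^n_u}^L(u^2t)=\sum_{i=1}^d \mu(DT^n_u) e^{-2nu^2t}\sum_{v\in M_u\Z^n} \prod_{k=1}^n I_{v_k}(2u^2t)\,\cos\!\bigl(2\pi\tri{v,\cl w_i(u)}\bigr),
\]
and parametrize $v=M_uz$ with $z\in\Z^n$, setting $\wt v=Az$ so that $v_k/u\to\wt v_k$. A direct calculation using $\cl w_i(u)=-(M_u^T)^{-1}(\omega_i^{(k)}(u)/2\pi)_{k=1}^n$ gives
\[
\tri{M_uz,\cl w_i(u)}=-z^T(\omega_i^{(k)}(u)/2\pi)_{k=1}^n,
\]
which depends on $u$ only through the eigenangles $\omega_i^{(k)}(u)$. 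The converging condition $\Lambda_k(u)\to\Lambda_k$ yields $\omega_i^{(k)}(u)\to\omega_i^{(k)}$, and hence
\[
\cos\!\bigl(2\pi\tri{v,\cl w_i(u)}\bigr)\longrightarrow \cos\!\bigl(2\pi\tri{\wt v,\cl w_i}\bigr),\qquad u\to\infty,
\]
for every fixed $z\in\Z^n$.

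Second, I would combine this with the pointwise convergence established in the proof of the preceding Lemma, namely
\[
\mu(DT^n_u)\prod_{k=1}^n e^{-2u^2t}I_{v_k(u)}(2u^2t)\longrightarrow \mu(\R T^n)\prod_{k=1}^n \frac{e^{-\wt v_k^2/(4t)}}{\sqrt{4\pi t}},
\]
to obtain termwise convergence of the $z$-indexed summands of $\theta^i_{DT^n_u}(u^2t)$ to the corresponding summands of $\Theta^i_{\R T^n}(t)$.

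Third, to exchange the limit with the summation I would invoke dominated convergence: since $|\cos(\cdot)|\le 1$, the absolute value of each summand of $\theta^i_{DT^n_u}(u^2t)$ is bounded above by the corresponding summand of the standard-Laplacian theta function $\theta_{DT^n_u}(u^2t)$, for which uniform-in-$u$ summable majorants (for $u$ sufficiently large) were already constructed and collected in Appendix~\ref{app:sum}. The Lebesgue Dominated Convergence Theorem then applies to each of the finitely many terms indexed by $i=1,\ldots,d$, and summing over $i$ yields
\[
\theta_{DT^n_u}^L(u^2t)\longrightarrow \sum_{i=1}^d \frac{\mu(\R T^n)}{(4\pi t)^{n/2}}\sum_{\wt v\in A\Z^n} e^{-|\wt v|^2/(4t)}\cos\!\bigl(2\pi\tri{\wt v,\cl w_i}\bigr)=\Theta^L_{\R T^n}(t).
\]

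The conceptual obstacle is essentially absent once the right parametrization $v=M_uz\leftrightarrow\wt v=Az$ is used; the only delicate point is checking that the inner product $\tri{v,\cl w_i(u)}$ really stabilizes as $u\to\infty$ rather than drifting, and this is precisely what the extra condition $\Lambda_k(u)\to\Lambda_k$ in Definition~\ref{def:converge} was designed to guarantee. All remaining work, including the uniform summable majorant, is inherited verbatim from the standard-Laplacian argument via the trivial bound $|\cos|\le 1$.
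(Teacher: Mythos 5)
Your proposal is correct and follows essentially the same approach as the paper: decompose into the $d$ components indexed by $i$, parametrize $v=M_uz$ versus $\widetilde v=Az$, observe that $\langle v,\overline{w}_i(u)\rangle=-z^T(\omega_i^{(k)}(u)/2\pi)_k$ depends only on $z$ and the eigenangles (which converge by hypothesis), and reuse the summable majorant from Lemma~\ref{lem:theta} via the bound $|\cos|\le 1$ (the paper uses $|e^{2\pi i\langle v,\overline{w}_j(u)\rangle}|\equiv 1$, which is the same point).
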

		
		\begin{proof}
			Recall that we have 
			$$\theta_{DT^n_u}^L(u^2t)=\sum_{i=1}^d \theta_{DT^n_u}^i(u^2t),\quad \Theta_{\R T^n}(t)=\sum_{i=1}^d \theta_{\R T^n}^i(t)$$
			in Section~\ref{sec:heat_kernel_CL}, we only need to demonstrate convergence for each $i = 1, \cdots, d$. The proof is a modification of the proof for Lemma~\ref{lem:theta}. On the discrete torus $DT^n_u$, for $v(u) = M_uz$ and $\widetilde{v} = Az$, where $z \in \mathbb{Z}^n$, we have
			$$
			e^{2\pi i \langle v, \cl w_{j}(u) \rangle} = \prod_{k=1}^n e^{-i z_k \omega_{j}^{(k)}(u)} \rightarrow \prod_{k=1}^n e^{-i z_k \omega_{j}^{(k)}} = e^{2\pi i \langle \widetilde{v}, \cl {w}_{j} \rangle}.
			$$
			
			For $v_k \rightarrow \infty$, we have
			$$
			v_k e^{-2u^2t}I_{v_k}(2u^2t)e^{2\pi i \langle v, \cl w_{j}(u) \rangle} \rightarrow \frac{\widetilde{v}_k}{\sqrt{4\pi t}} e^{-\frac{\widetilde{v}_k^2}{4t}}e^{2\pi i \langle \widetilde{v}, \cl {w}_{j} \rangle},
			$$
			For $v_k \not\rightarrow \infty$, we have $\widetilde{v}_k = 0$, and the convergence follows similarly by replacing $v_k$ with $u$.
			
			Because $|e^{2\pi i\tri{v,\cl w_j}}|\equiv 1,$ we still have summable bounds as in Lemma~\ref{lem:theta}. By interchanging the limit and the summation, for each $j = 1, \cdots, d$, we have
			$$
			\theta_{DT^n_u}^j(u^2t) \rightarrow \sum_{\widetilde{v} \in A\mathbb{Z}^n} \mu(\mathbb{R}T^n) \frac{1}{(4\pi t)^{\frac{n}{2}}} e^{-\frac{|\widetilde{v}|^2}{4t}} e^{2\pi i \langle \widetilde{v}, \cl{w}_{j} \rangle} = \theta_{\R T^n}^j(t).
			$$
			Therefore, the convergence of the entire theta function is established.
		\end{proof}
		
		\subsection{Convergence of the log-determinant}\label{sec:main}
		
		The regularized log determinant of the standard Laplacian is defined through the derivative of the spectral zeta function as outlined in \cite{Converge1,Converge2}. In a similar way, we define the regularized log determinant of the connection Laplacian as follows.
		
		\begin{defi}
			The spectral zeta function of the connection Laplacian is defined to be
			$$
			\zeta(s) := \sum_{\lambda \in \operatorname{Spec}(L) \setminus \{0\}} \frac{1}{\lambda^s}.
			$$
		\end{defi}
			
		The spectral zeta function of the connection Laplacian on the real torus is then
		$$
		\zeta(s) = \sum_{i=1}^d \sum_{\substack{w \in \Gamma^* + \overline{w}_i \\ w \neq 0}} (2\pi|w|)^{-2s} =: \sum_{i=1}^d \zeta_i(s).
		$$
		
		For $ i = 1, \cdots, d $, the case $ w = 0 $ is only possible when $ \overline{w}_i = 0 $. Thus, we will always consider the situation that $ \overline{w}_i \neq 0 $ in later discussions, as the case when $ \overline{w}_i = 0 $ degenerates to the standard Laplacian, which has already been detailed in \cite{Converge2}.
			
		For each $\zeta_i(s)$, it converges when $\Re(s) > \frac{n}{2}$. By applying the Mellin transform, we can demonstrate that it extends meromorphically to $\mathbb{C}$, and $0$ is a holomorphic point as per \cite{Berline1992HeatKA}. Formally, since $\left. \frac{d}{ds}\frac{1}{\lambda^s} \right|_{s=0} = -\log \lambda$, it is reasonable to adopt the following definition.
		
		\begin{defi}[Ray-Singer \cite{RAY1971145}]
			The regularized log-determinant $\log \det{}^{*}L$ is defined by
			$$
			\log \det{}^{\ast}L = -\zeta^{\prime}(0) = -\sum_{i=1}^d \zeta_i^{\prime}(0).
			$$
		\end{defi}
		
		\begin{prop}
			For all $i = 1, \cdots, d$, $s = 0$ is a holomorphic point of $\zeta_i(s)$, and if $\overline{w}_i \neq 0$, then
			$$
			\zeta_i^{\prime}(0) = \int_1^\infty \Theta_{\mathbb{R}T^n}^i(t)\frac{dt}{t} + \int_0^1 \left(\Theta_{\mathbb{R}T^n}^i(t) - \frac{\mu(\mathbb{R}T^n)}{\sqrt{4\pi t}^n}\right)\frac{dt}{t} - \frac{2\mu(\mathbb{R}T^n)}{n\sqrt{4\pi}^n}.
			$$
		\end{prop}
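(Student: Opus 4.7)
The plan is to apply the classical Mellin-transform method to $\zeta_i(s)$. First I observe that since $\cl w_i = -(A^T)^{-1}(\omega_i^{(k)}/2\pi)_{k=1}^n$ with $\omega_i^{(k)}\in(-\pi,\pi]$, each coordinate of $A^T\cl w_i$ lies in $[-1/2,1/2)$, so $\cl w_i \in \Gamma^* = (A^T)^{-1}\Z^n$ would force $\cl w_i = 0$. Hence the hypothesis $\cl w_i \neq 0$ guarantees $0 \notin \Gamma^* + \cl w_i$, so the index sets of $\zeta_i$ and $\Theta_{\R T^n}^i$ coincide. Using $a^{-s} = \Gamma(s)^{-1}\int_0^\infty t^{s-1}e^{-at}\,dt$ termwise and swapping sum with integral (justified by absolute convergence for $\Re(s) > n/2$), I obtain
$$\Gamma(s)\zeta_i(s) = \int_0^\infty t^{s-1}\Theta_{\R T^n}^i(t)\,dt.$$

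Next I split the integral at $t=1$. The tail $\int_1^\infty t^{s-1}\Theta_{\R T^n}^i(t)\,dt$ is entire in $s$ because $\Theta_{\R T^n}^i$ decays exponentially as $t\to\infty$, the smallest eigenvalue $4\pi^2\min_{w\in\Gamma^*+\cl w_i}|w|^2$ being strictly positive. For the small-$t$ part I invoke the Poisson-dual expression derived earlier,
$$\Theta_{\R T^n}^i(t) = \frac{\mu(\R T^n)}{(4\pi t)^{n/2}}\sum_{w\in\Gamma}e^{-|w|^2/(4t)}\cos(2\pi\tri{w,\cl w_i}),$$
from which the $w=0$ summand contributes $\mu(\R T^n)/(4\pi t)^{n/2}$ while all other summands decay like $e^{-c/t}$ as $t\to 0^+$, with $c>0$ governed by the shortest nonzero vector of $\Gamma$. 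Consequently
$$\int_0^1 t^{s-1}\Bigl(\Theta_{\R T^n}^i(t)-\frac{\mu(\R T^n)}{(4\pi t)^{n/2}}\Bigr)dt$$
is entire in $s$, while the singular piece evaluates to $\mu(\R T^n)(4\pi)^{-n/2}/(s-n/2)$. Adding the three contributions yields the meromorphic continuation
$$\Gamma(s)\zeta_i(s) = \int_1^\infty t^{s-1}\Theta_{\R T^n}^i(t)\,dt + \int_0^1 t^{s-1}\Bigl(\Theta_{\R T^n}^i(t)-\frac{\mu(\R T^n)}{(4\pi t)^{n/2}}\Bigr)dt + \frac{\mu(\R T^n)}{(4\pi)^{n/2}(s-n/2)},$$
whose right-hand side is holomorphic at $s=0$ (since $n\geq 1$ excludes $s=n/2$).

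Finally, because $\Gamma(s)$ has a simple pole at $s=0$ with residue one, $\zeta_i(s)$ must itself be holomorphic at $s=0$ with $\zeta_i(0)=0$, and the expansion $s\Gamma(s)=\Gamma(s+1)=1+O(s)$ then gives $\zeta_i'(0)=\bigl[\Gamma(s)\zeta_i(s)\bigr]_{s=0}$. Setting $s=0$ in the continuation formula, with the singular term becoming $-2\mu(\R T^n)/(n(4\pi)^{n/2}) = -2\mu(\R T^n)/(n\sqrt{4\pi}^n)$, reproduces exactly the claimed expression. The only technical point—mild but essential—is the small-$t$ estimate for $\Theta_{\R T^n}^i(t)$ after subtraction of the leading singular term; this in turn rests on the Poisson-dual form above, in which the hypothesis $\cl w_i \neq 0$ is what prevents the $w=0$ contribution from degenerating to a non-removable divergence.
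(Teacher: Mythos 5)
Your proposal is correct and follows essentially the same route as the paper: Mellin-transform representation of $\Gamma(s)\zeta_i(s)$, a split of the integral at $t=1$, subtraction of the leading $\mu(\R T^n)(4\pi t)^{-n/2}$ term obtained from the Poisson-dual form on $(0,1)$, and the expansion of $\Gamma(s)$ near $s=0$ to extract $\zeta_i'(0)$ from the meromorphic continuation. Your preliminary observation that $\cl w_i\neq 0$ forces $0\notin\Gamma^*+\cl w_i$ (so the index sets of $\zeta_i$ and $\Theta^i_{\R T^n}$ agree) is a worthwhile detail that the paper leaves implicit.
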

		
		\begin{proof}
			Using the Mellin transform and the Monotone Convergence Theorem, we have:
			$$
			\begin{aligned}
				\zeta_i(s) &= \sum_{w \in \Gamma^* + \overline{w}_i} \frac{1}{\Gamma(s)}\int_0^\infty e^{-4\pi^2 |w|^2t}t^{s-1}dt \\
				&= \frac{1}{\Gamma(s)}\int_0^\infty \sum_{w \in \Gamma^* + \overline{w}_i}e^{-4\pi^2|w|^2t}t^{s-1}dt \\
				&= \frac{1}{\Gamma(s)}\int_0^\infty \Theta_{\mathbb{R}T^n}^i(t)t^{s-1}dt \\
				&= M[\Theta_{\mathbb{R}T^n}^i(t)].
			\end{aligned}
			$$			
			
			As $\Theta_{\mathbb{R}T^n}^i(t) = \sum_{w \in \Gamma^* + \overline{w}_i} e^{-4\pi^2|w|^2t}$, we know that there exists $C > 0$ such that
			$$
			\Theta_{\mathbb{R}T^n}^i(t) = O(e^{-Ct}), \quad \text{for } t \ge 1.
			$$
			Furthermore, we have $\Theta_{\mathbb{R}T^n}^i(t) = \frac{\mu(\mathbb{R}T^n)}{\sqrt{4\pi t}^n}\sum_{w \in \Gamma^*}e^{-\frac{|w|^2}{4t}}e^{2\pi i \langle w, \overline{w}_i \rangle}$, and then
			$$
			\Theta_{\mathbb{R}T^n}^i(t) = \frac{\mu(\mathbb{R}T^n)}{\sqrt{4\pi t}^n} + O(e^{-\frac{C}{t}}), \quad \text{for } 0 < t \le 1.
			$$
			
			Now we claim that the extension can be written as
			$$
			\begin{aligned}
				\zeta_i(s)&=\frac{1}{\Gamma(s)}\left(\int_1^\infty \Theta_{\R T^n}^i(t)t^{s-1}dt+\int_0^1 \left(\Theta_{\R T^n}^i(t)-\frac{\mu(\R T^n)}{\sqrt{4\pi t}^n}\right)t^{s-1}dt+\int_0^1 \frac{\mu(\R T^n)}{\sqrt{4\pi t}^n}t^{s-1}dt\right) \\
				&= \frac{1}{\Gamma(s)}\int_1^\infty \Theta_{\R T^n}^i(t)t^{s-1}dt+\frac{1}{\Gamma(s)}\int_0^1 \left(\Theta_{\R T^n}^i(t)-\frac{\mu(\R T^n)}{\sqrt{4\pi t}^n}\right)t^{s-1}dt+\frac{\mu(\R T^n)}{(s-\frac{n}{2})\sqrt{4\pi}^n\Gamma(s)}
			\end{aligned}
			$$
			
			As $\frac{1}{\Gamma(s)}=s+O(s^2)$ when $s\rightarrow 0,$ $s=0$ is a holomorphic point of the above expression, and we have
			$$
			\zeta_i(0)=0, \quad \zeta_i'(0)=\int_1^\infty \Theta_{\R T^n}^i(t)\frac{dt}{t}+\int_0^1 \left(\Theta_{\R T^n}^i(t)-\frac{\mu(\R T^n)}{\sqrt{4\pi t}^n}\right)\frac{dt}{t}-\frac{2\mu(\R T^n)}{n\sqrt{4\pi}^n}.
			$$
			
		\end{proof}
	
		Similarly, for the connection Laplacian on the discrete torus, we have $\zeta_i(s) = M[\theta_{DT^n}^i(t)]$ for $\Re(s) > \frac{n}{2}$. However, we need to use the Gauss transformation to obtain the extension near $s = 0$.
		
		\begin{prop}\label{prop:DT-log-determinant}
			For the discrete torus $DT^n = \mathbb{Z}^n / A\mathbb{Z}^n$, and for $i = 1, \cdots, d$, if $\overline{w}_i \neq 0$, we have
			$$
			\sum_{w \in (\Gamma^* + \overline{w}_i) / \mathbb{Z}^n} \log(\lambda_w) = \mu(DT^n)\mathcal{I}_n(0) + \mathcal{H}^i(0), \quad \lambda_w = 4 \sum_{k=1}^n \sin^2(w_k\pi),
			$$
			where
			$$
			\mathcal{I}_n(s) = -\int_0^\infty (e^{-s^2t}e^{-2nt}I_0(2t)^n - e^{-t})\frac{dt}{t}
			$$
			only depends on $n$, and
			$$
			\mathcal{H}^i(s) = -\int_0^\infty e^{-s^2t}(\theta_{DT^n}^i(t) - \mu(DT^n)e^{-2nt}I_0(2t)^n)\frac{dt}{t}.
			$$
		\end{prop}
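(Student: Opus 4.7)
The plan is to exploit the Frullani integral representation of the logarithm: for any $\lambda > 0$,
$$
\log \lambda = \int_0^\infty \left(e^{-t} - e^{-\lambda t}\right)\frac{dt}{t}.
$$
The hypothesis $\overline{w}_i \neq 0$ guarantees that every $\lambda_w = 4\sum_{k=1}^n \sin^2(w_k\pi)$ appearing in the sum is strictly positive, so this representation applies termwise. Since the index set $(\Gamma^* + \overline{w}_i)/\mathbb{Z}^n$ is finite of size $\mu(DT^n)$, I can interchange the finite sum with the integral to obtain
$$
\sum_w \log \lambda_w = \int_0^\infty \left(\mu(DT^n) e^{-t} - \theta^i_{DT^n}(t)\right)\frac{dt}{t}.
$$

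Next I would insert the intermediate quantity $\mu(DT^n) e^{-2nt} I_0(2t)^n$, which is precisely the $w=0$ summand in the representation
$$
\theta^i_{DT^n}(t) = \mu(DT^n) e^{-2nt}\sum_{w\in\Gamma} I_w(2nt)\, e^{2\pi i\langle w, \overline{w}_i\rangle}
$$
established in Section~\ref{sec:heat_kernel_CL}. Adding and subtracting this term splits the integral into
$$
\mu(DT^n)\int_0^\infty \left(e^{-t} - e^{-2nt} I_0(2t)^n\right)\frac{dt}{t} + \int_0^\infty \left(\mu(DT^n) e^{-2nt} I_0(2t)^n - \theta^i_{DT^n}(t)\right)\frac{dt}{t},
$$
which is exactly $\mu(DT^n)\mathcal{I}_n(0) + \mathcal{H}^i(0)$ by the definitions evaluated at $s=0$, finishing the proof modulo convergence.

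The remaining work is to verify that each of the two integrals converges on its own so the decomposition is legitimate. For the first integral, the Taylor expansion $I_0(2t) = 1 + t^2 + O(t^4)$ combined with $e^{-2nt} = 1 - 2nt + O(t^2)$ gives $e^{-t} - e^{-2nt}I_0(2t)^n = (2n-1)t + O(t^2)$ near $t=0$, so the integrand is bounded there; at infinity Proposition~\ref{prop:Iasymptotic} yields $e^{-2nt}I_0(2t)^n \sim (4\pi t)^{-n/2}$, producing a $t^{-n/2 - 1}$ tail. For the second integral, the difference equals $-\mu(DT^n) e^{-2nt}\sum_{w\in\Gamma\setminus\{0\}} I_w(2nt)\, e^{2\pi i\langle w,\overline{w}_i\rangle}$, and each nonzero term $I_w(2nt)$ vanishes like a positive power of $t$ as $t\to 0$, while $\theta^i_{DT^n}(t) = O(e^{-ct})$ at infinity by positivity of all eigenvalues. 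The main obstacle I anticipate is controlling the full lattice sum uniformly enough to extract an integrable majorant near $t=0$; this is handled using the bound $I_k(z) = O((z/2)^{|k|}/|k|!)$ together with $|e^{2\pi i\langle w,\overline{w}_i\rangle}| = 1$, which makes the $w$-series absolutely convergent and allows termwise estimates to combine into the required tail bound.
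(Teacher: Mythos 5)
Your proof is correct, and it takes a genuinely different route from the paper. The paper works with the Gauss-transformed quantity $2s\int_0^\infty e^{-s^2t}\theta_{DT^n}^i(t)\,dt$, recognizes the left-hand side $\sum_w \frac{2s}{s^2+\lambda_w}$ as the $s$-derivative of $f(s)=\sum_w\log(s^2+\lambda_w)$, integrates and fixes the constant of integration by comparing asymptotics as $s\to\infty$, and finally evaluates the resulting identity $f(s)=\mu(DT^n)\mathcal{I}_n(s)+\mathcal{H}^i(s)$ at $s=0$ by analytic continuation. You instead apply the Frullani formula $\log\lambda=\int_0^\infty(e^{-t}-e^{-\lambda t})\frac{dt}{t}$ termwise over the finite index set, sum to get $\sum_w\log\lambda_w=\int_0^\infty(\mu(DT^n)e^{-t}-\theta^i_{DT^n}(t))\frac{dt}{t}$, and insert and subtract the $w=0$ contribution $\mu(DT^n)e^{-2nt}I_0(2t)^n$ to split into exactly $\mu(DT^n)\mathcal{I}_n(0)+\mathcal{H}^i(0)$. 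This is more elementary and avoids the asymptotic-matching and continuation steps entirely; the only thing one must then check is that each of the two integrals converges separately so the decomposition is legitimate, which your small-$t$ and large-$t$ estimates supply (the key points being that $e^{-t}-e^{-2nt}I_0(2t)^n=O(t)$ near $0$ while $e^{-2nt}I_0(2t)^n\sim(4\pi t)^{-n/2}$ at infinity, and that $\theta^i_{DT^n}(t)-\mu(DT^n)e^{-2nt}I_0(2t)^n$ vanishes to order at least $t$ near $0$ and decays like $t^{-n/2}$ at infinity since $\overline{w}_i\neq 0$ forces all $\lambda_w>0$). What the paper's $s$-parameter formulation buys is a family of holomorphic functions $\mathcal{I}_n(s),\mathcal{H}^i(s)$ reused in the degenerate case $\overline{w}_i=0$ in Appendix~\ref{app:B}; for the present proposition in isolation your argument is the shorter one.
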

		
		\begin{proof}
			By the definition of $\theta_{DT^n}^i(t)$, calculating $2s \int_0^\infty e^{-s^2t} \theta_{DT^n}^i(t) \, dt$ gives us
			\begin{align}
				\sum_{w \in (\Gamma^* + \overline{w}_i) / \mathbb{Z}^n} \frac{2s}{s^2 + \lambda_w} &= \mu(DT^n) \cdot 2s \int_0^\infty e^{-s^2t} e^{-2nt} I_0(2t)^n \, dt \label{eq:ODE} \\
				&\quad + 2s \int_0^\infty e^{-s^2t} \left(\theta_{DT^n}^i(t) - \mu(DT^n) e^{-2nt} I_0(2t)^n\right) \, dt \nonumber
			\end{align}
			for all $s \in \mathbb{C}$ with $\Re(s^2) > 0$. We separate the integral into two parts for the convenience of later discussion.
			
			Now, we attempt to find the primitive function of both sides. The left-hand side of equation \eqref{eq:ODE} is the derivative of
			$$
			f(s) = \sum_{w \in (\Gamma^* + \overline{w}_i) / \mathbb{Z}^n} \log(s^2 + \lambda_w),
			$$
			with the asymptotic condition $f(s) = \mu(DT^n)\log(s^2) + o(1)$ as $s \rightarrow \infty$. For the right-hand side of equation \eqref{eq:ODE}, the primitive function can be written as
			$$
			-\mu(DT^n)\int_0^\infty \left(e^{-s^2t}e^{-2nt}I_0(2t)^n - e^{-t}\right)\frac{dt}{t} - \int_0^\infty e^{-s^2t}\left(\theta_{DT^n}^i(t) - \mu(DT^n)e^{-2nt}I_0(2t)^n\right)\frac{dt}{t}.
			$$
			
			By the definition in the proposition, it is
			$$
			\mu(DT^n)\II_n(s)+\HH^i(s).
			$$
			They have the asymptotic condition
			$$
			\II_n(s)=\log(s^2)+o(1),\quad \HH^i(s)=o(1),\quad \text{as }s\rightarrow\infty.
			$$
			
			Thus, we have
			$$
			f(s)=\sum_{w\in (\Gamma^*+\cl w_i)/\Z^n}\log(s^2+\lambda_w)=\mu(DT^n)\II_n(s)+\HH^i(s),\quad \Re(s^2)>0.
			$$
			However, both sides are well defined and holomorphic near $s=0,$ so
			$$
			\sum_{w\in (\Gamma^*+\cl w_i)/\Z^n}\log(\lambda_w)=\mu(DT^n)\II_n(0)+\HH^i(0).
			$$
		\end{proof}
		
		\begin{proof}[Proof of Theorem~\ref{thm:converge}]
			Suppose $\overline{w}_i \neq 0$, and we have $\cl w_i(u)\neq 0$ except for finitely many terms by \eqref{eq:w}. As $\theta_{DT^n_u}^i(u^2t) \rightarrow \Theta_{\mathbb{R}T^n}^i(t)$, we have
			$$
			\int_1^\infty \left(\theta_{DT^n_u}^i(u^2t) - \mu(DT^n_u)e^{-2u^2nt}I_0(2u^2t)^n\right)\frac{dt}{t} \rightarrow \int_1^\infty \Theta_{\mathbb{R}T^n}^i(t)\frac{dt}{t} - \frac{2\mu(\mathbb{R}T^n)}{n\sqrt{4\pi}^n}
			$$
			by the Lebesgue Dominated Convergence Theorem, where the upper bound is established in \cite[Section~5]{Converge2} and modified in the Appendix~\ref{app:int}. Similarly,
			$$
			\int_0^1 \left(\theta_{DT^n_u}^i(u^2t) - \mu(DT^n_u)e^{-2u^2nt}I_0(2u^2t)^n\right)\frac{dt}{t} \rightarrow \int_0^1 \left(\Theta_{\mathbb{R}T^n}^i(t) - \frac{\mu(\mathbb{R}T^n)}{\sqrt{4\pi t}^n}\right)\frac{dt}{t}.
			$$
			Therefore, $\mathcal{H}_u^i(0) \rightarrow -\zeta_i'(0)$, and then
			\begin{align}
				\sum_{w \in (\Gamma^*_u + \overline{w}_i(u)) / \mathbb{Z}^n} \log(\lambda_w) = \mu(DT^n_u)\mathcal{I}_n(0) - \zeta_i'(0) + o(1).\label{eq:case1}
			\end{align}
			
			For the case when $\overline{w}_i = 0$, if $\overline{w}_i(u) \equiv 0$ except for finitely many terms, we already have
			\begin{align}
				\sum_{w \in (\Gamma^*_u \setminus \{0\}) / \mathbb{Z}^n} \log(\lambda_w) = \mu(DT^n_u)\mathcal{I}_n(0) - \zeta_i'(0) + \log u^2 + o(1)\label{eq:case2}
			\end{align}
			by the work in \cite{Converge2}.
			
			The only remaining case is when $\overline{w}_i = 0$, but $\overline{w}_i(u) \neq 0$ for infinitely many terms. In this case, we have $\cl w_i(u)=o\left(u^{-1}\right).$ We need to slightly modify $\log \det{}^* (L_{DT^n_u})$ for this situation, that is to define
			\begin{align}
				\log \det{}^*(L_{DT^n_u}):=\sum_{i=1}^d \sum_{w\in (\Gamma^*_u\setminus \{0\}+\cl w_i(u))/\Z^n} \log(\lambda_w) +\sum_{i=1}^d \chi(\cl w_i(u)) \log(\lambda_{\cl w_i(u)}),\label{eq:logdet}
			\end{align}
			where $\chi(\cl w_i(u))=0$ if $\cl w_i(u)=o(u^{-1}),$ and $1$ otherwise. This case is left to the Appendix~\ref{app:B}.
			
			In summary, for $i=1,\cdots,d,$ by taking \eqref{eq:case1} into \eqref{eq:logdet} when $\cl w_i\neq 0$, and taking \eqref{eq:case3} in Appendix~\ref{app:B} into \eqref{eq:logdet} when $\cl w_i=0$, we have
			$$
			\log \det{}^* (L_{DT^n_u}) = \mu(DT^n_u)\mathcal{I}_n(0)d + (\dim \ker L_{\mathbb{R}T^n}) \log u^2 + \log \det{}^* (L_{\mathbb{R}T^n}) + o(1).
			$$
			
		\end{proof}

\appendix
\section{Upper bound for theta function}
	\subsection{Summable bound on lattice}\label{app:sum}
	In the proof of Lemma~\ref{lem:theta}, we need to provide a uniform upper bound in $u$ for 
	$$
	\theta_{DT^n_u}(u^2t) = \mu(DT^n_u)\sum_{v(u) \in M_u\mathbb{Z}^n}\prod_{k=1}^n e^{-2u^2t}I_{v_k(u)}(2u^2t)
	$$
	to interchange the limit in $u$ and summation, thereby proving the convergence of the theta function.
	
	We first estimate the lower bound of $\frac{|v_k(u)|}{u}.$ Every $v(u) \in M_u\mathbb{Z}^n$ corresponds to a vector $z \in \mathbb{Z}^n$, yielding $v_k(u) = (M_uz)_k = \langle m_k(u), z \rangle$, where $m_k$ denotes the $k$'th row vector of $M_u$. Let $\alpha_k$ represent the $k$'th row vector of $A$, then we have $\widetilde{v}_k = (Az)_k = \langle \alpha_k, z \rangle$. As $\frac{M_u}{u} \rightarrow A$, it follows that $\left\|\frac{m_k(u)}{u}\right\|_2 \ge \frac{1}{2}\left\|\alpha_k\right\|_2$ for sufficiently large $u$. Consequently,
	$$
	\frac{|v_k(u)|}{u} = \left\|\frac{m_k(u)}{u}\right\|_2 |\cos\theta(m_k(u), z)| \|z\|_2 \ge \frac{1}{2}\|\alpha_k\|_2 |\cos\theta(m_k(u), z)| \|z\|_\infty,
	$$
	where $\theta(m_k(u), z)$ denotes the angle between $m_k(u)$ and $z$.
	
	Given that $\{\alpha_k\}_{k=1}^n$ is non-degenerate, we have $C_0 := \min_{z \in S^n} \max_k |\cos\theta(\alpha_k, z)| > 0$. Moreover, for all $z \in \mathbb{Z}^n \setminus \{0\}$ and sufficiently large $u$, it can be shown that
	$$\max_k|\cos\theta(m_k(u), z)| = \max_k\left|\cos\theta\left(\frac{m_k(u)}{u}, \frac{z}{\|z\|_2}\right)\right| \ge \frac{C_0}{2}$$
	by the convergence of $\frac{M_u}{u} \rightarrow A$.
	
	Let $k_0(u,z)=\arg\max_k |\cos\theta(m_k(u),z)|,$ there exists $u_0$ large enough, such that
	$$\frac{|v_{k_0}(u)|}{u}\ge \frac{\min\|\alpha_k\|_2}{4}\|z\|_\infty=:2C\|z\|_\infty,\quad \foa z\neq 0,\quad u>u_0.$$
		
	Now we can give the required upper bound. By \cite[Corollary~4.4]{Converge2}, we have
	$$
	ue^{-2u^2t}I_{v_k(u)}(2u^2t) \le \frac{1}{\sqrt{2t}}\left(1 + \frac{|v_k(u)|}{2u^2t}\right)^{-\frac{|v_k(u)|}{2}} \le 1.
	$$	
	
	By the fact that $g(c,y) = \left(1 + \frac{c}{y}\right)^{-y}$ monotonically decreases with respect to both $y$ and $c$ in $y,c>0,$ it follows
	$$
	\begin{aligned}
		ue^{-2u^2t}I_{v_{k_0}(u)}(2u^2t) &\le \frac{1}{\sqrt{2t}}\left(1 + \frac{|v_{k_0}(u)|}{2u^2t}\right)^{-\frac{|v_{k_0}(u)|}{2}}\\
		&= \frac{1}{\sqrt{2t}}g^{\frac{|v_{k_0}(u)|^2}{4u^2t}}\left(\frac{|v_{k_0}(u)|}{2}\right)\\
		&\le \frac{1}{\sqrt{2t}}g^{\frac{C^2\|z\|_\infty^2}{t}}\left(Cu_0\|z\|_\infty\right)\\
		&=\frac{1}{\sqrt{2t}}\left(1+\frac{C\|z\|_\infty}{u_0t}\right)^{-Cu_0\|z\|_\infty},&\foa u\ge u_0.
	\end{aligned}
	$$
	
	Additionally assume that $\frac{\mu(DT^n_u)}{u_n}\le 2\mu(\R T^n)$ for $u\ge u_0,$ we obtain
	$$
	\begin{aligned}
		\mu(DT^n_u)\sum_{v(u)\in M_u\Z^n}\prod_{k=1}^ne^{-2u^2t}I_{v_k(u)}(2u^2t)&\le\frac{\mu(DT^n_u)}{u^n} \frac{1}{\sqrt{2t}^{n-1}}\sum_{v(u)\in M_u\Z^n} ue^{-2u^2t}I_{v_{k_0}(u)}(2u^2t) \\
		&\le \frac{2\mu(\R T^n)}{\sqrt{2t}^n}\sum_{m=0}^\infty \sum_{\|z\|_\infty=m} \left(1+\frac{Cm}{u_0t}\right)^{-Cu_0m}\\
		&\le \frac{2\mu(\R T^n)}{\sqrt{2t}^n}\sum_{m=0}^\infty 2n(2m+1)^{n-1}\left(1+\frac{C}{u_0t}\right)^{-Cu_0m}.
	\end{aligned}
	$$ 
	For every fixed $t > 0$, this provides a uniform bound for $u \geq u_0$. It leads to the feasibility of interchanging the limit and sum in the proof of Lemma~\ref{lem:theta}.
	
	\subsection{Integrable bound for $t$ in $\R$}\label{app:int}
	In the proof of Theorem~\ref{thm:converge} in Section~\ref{sec:converge}, we need to establish a control function for $\theta^i_{DT^n_u}(u^2t)$ under the integral in $t$ over $\R$. For the integral over the range from 1 to infinity, we employ a similar approach to that in \cite[Lemma~5.3]{Converge2}. We express $\theta_{DT^n_u}^i(u^2t)$ as follows:
	$$
	\begin{aligned}
		\theta_{DT^n_u}^i(u^2t)&=\sum_{w\in (\Gamma^*+\cl w_i)_u/\Z^n} e^{-4u^2t \sum_{k=1}^n\sin^2(w_k\pi)}\\
		&=\sum_{w\in (\Gamma^*+\cl w_i)_u/\Z^n} \prod_{k=1}^ne^{-4 u^2 \sin^2 (\min\{w_k,1-w_k\}\pi)t}.
	\end{aligned}
	$$
	
	For each $ w(u) \in (\Gamma^*_u + \overline{w}_i(u)) / \mathbb{Z}^n $, $ w(u) $ is represented as $ (M_u^{-1})^T(z + z_0) $ for some $ z \in \mathbb{Z}^n $, where $ z_0 = -\left(\frac{\omega_j^{(k)}}{2\pi}\right)_{k=1}^n\neq 0 $ such that $ (M_u^{-1})^Tz_0 = \overline{w}_{i}(u) $. We always associate $w$ with the corresponding $z.$ If $ w_k(u) \in [0, \frac{1}{2}] $, then by the inequality $ \sin x \ge \frac{2}{\pi}x $ for $ x \in [0, \frac{\pi}{2}] $, we have
	$$
	\begin{aligned}
		u\sin(w_k(u)\pi)&\ge 2uw_k(u)\\
		&=2u((M_u^{-1})^T (z+z_0))_k\\
		&=2u|\tri{\gamma_k(u),z+z_0}|,
	\end{aligned}
	$$
	where $\gamma_k(u)$ is the $k$'th column vector of $M_u^{-1}$. Let $\beta_k$ be the $k$'th column vector of $A^{-1},$ then $u\gamma_k(u)\rightarrow \beta_k$ due to $uM_u^{-1}\rightarrow A^{-1}.$ As a result, for sufficiently large $u,$ we have $2u\|\gamma_k(u)\|_2\ge \|\beta_k\|_2$ for all $k=1,\cdots,n.$
	
	If $w_k(u)\in (\frac{1}{2},1],$ similarly we have
	$$
	\begin{aligned}
		u\sin((1-w_k(u))\pi)&\ge 2u(1-w_k(u))\\
		&=2u((M_u^{-1})^T(m_k(u)-z-z_0))_k\\
		&=2u|\tri{\gamma_k(u),z+z_0-m_k(u)}|,
	\end{aligned}
	$$
	where $m_k(u)$ is the $k$'th column vector of $M_u.$ We have $\tri{m_i(u),\gamma_j(u)}=\delta_{ij},$ where $\delta_{ij}$ is the Kronecker symbol.
	
	For any fixed $x=z+z_0,$ assume $w_{k_i}(u)\in (\frac{1}{2},1]$ for $i=1,\cdots,r,$ and $w_j(u)\in [0,\frac{1}{2}]$ for the remaining $j\notin \{k_i\}_{i=1}^r.$ Notice that $\tri{\gamma_k(u),x}=\tri{\gamma_k(u),x-m_j(u)}$ for all $j\neq k,$ we turn to consider $x'=x-\sum_{i=1}^r m_{k_i}(u).$ Then for all $k=1,\cdots,n,$
	$$
	\begin{aligned}
		u\sin (\min\{w_k,1-w_k\}\pi)&\ge 2u|\tri{\gamma_k(u),x'}|\\
		&\ge 2u\|\gamma_k(u)\|_2 |\cos\theta(\gamma_k(u),x')| \|x'\|_2\\
		&\ge \|\beta_k\|_2 |\cos\theta(\gamma_k(u),x')| \|x'\|_\infty\\
		&\ge \|\beta_k\|_2 |\cos\theta(\gamma_k(u),x')| \left|\left\|z-\sum_{i=1}^r m_{k_i}(u)\right\|_\infty-\|z_0\|_\infty\right|.
	\end{aligned}
	$$ 
	
	Similar to Appendix~\ref{app:sum}, there exist $C_0,u_0>0$ such that $\max_k |\cos \theta(\gamma_k(u),x)|\ge C_0$ for all $x\neq 0$ and $u>u_0,$ due to the convergence of $u\gamma_k(u)\rightarrow \beta_k$ and the non-singularity of $\{\beta_k\}_{k=1}^n.$
	
	Notice that for any $\wt z\in \Z^n$ and fixed $u,$ $\wt z=z-\sum_{i=1}^r m_{k_i}(u)$ for some $z\in \Z^n$ only happens for at most $2^n$ times. By setting $c=4\min_k\|\beta_k\|_2 C_0$ and denoting $k_0(u,x) = \arg\max_k |\cos\theta(\gamma_k(u),x')|,$ we obtain
	$$
	\begin{aligned}
		\theta^i_{DT^n_u}(u^2t)\le & \sum_{w\in (\Gamma^* +\cl w_i)_u/\Z^n} \left( e^{-c\left(\|z-\sum_{i=1}^r m_{k_i}(u)\|_\infty -\|z_0\|_\infty\right)^2t}\right) \\
		\le& 2^n\left(e^{-c\|z_0\|_\infty^2 t}+\sum_{m=1}^\infty 2n(2m+1)^{n-1} e^{-c(m-\|z_0\|_\infty)^2t}\right)\\
		= & o(e^{-c't}),\quad c'>0,
	\end{aligned}
	$$
	which is an integrable function.
	
	For the integral over the range from $0$ to $1,$ we need to estimate 
	\begin{align}
		\theta_{DT^n_u}^i(u^2t)-\mu(DT^n_u)(e^{-2u^2t}I_0(2u^2t))^n=\mu(DT^n_u)\sum_{v(u)\in M_u\Z^n\setminus\{0\}}\prod_{k=1}^n e^{-2u^2t}I_{v_k}(2u^2t).\label{eq:aptheta}
	\end{align}
	This can be controlled using the formula derived in Appendix~\ref{app:sum}:
	$$
	\begin{aligned}
		\eqref{eq:aptheta}&\le  \frac{2\mu(\R T^n)}{\sqrt{2t}^n}\sum_{m=1}^\infty 2n(2m+1)^{n-1}\left(1+\frac{C}{u_0t}\right)^{-Cu_0m} \\
		&\le \frac{2\mu(\R T^n)}{\sqrt{2t}^n}\left(\frac{u_0t}{C+u_0t}\right)^{Cu_0}\sum_{m=0}^\infty 2n(2m+3)^{n-1}\left(1+\frac{C}{u_0}\right)^{- Cu_0m}\\
		&\le \wt C(u_0)\mu(\R T^n)\frac{t^{Cu_0}}{\sqrt{2t}^n}.
	\end{aligned}
	$$
	It suffices to select $u_0>\frac{n}{2C}$ such that the term $ \frac{t^{Cu_0}}{\sqrt{2t}^n} $ is integrable on the interval $(0,1)$ with respect to the measure $\frac{dt}{t}$. Thus $\theta^i_{DT^n_u}(u^2t)$ has an integrable controlling function over the entire $\R$ for all $u$ sufficiently large.

\section{Discrete tori with degenerating torsion}\label{app:B}
	To show the converging results of log-determinant for discrete tori with degeneration torsion, we need to eliminate the eigenvalues $\lambda_{\cl w_i(u)}$ if $\cl w_i(u)=o(u^{-1})$ in the formula of log-determinant on discrete tori, as shown in \eqref{eq:logdet}, although $\lambda_{\cl w_i(u)}$ may not be zero.
	
	Properties for the real torus without torsion are already demonstrated in \cite{Converge2}. For discrete tori with torsion, we only need to slightly modify the proof. For example, we have the following proposition for this case:
	
	\begin{prop}
		For the discrete torus $DT^n = \mathbb{Z}^n / A\mathbb{Z}^n$, the following relation holds for the eigenvalues of connection Laplacian:
		\begin{align}
			\sum_{w \in (\Gamma^*\setminus\{0\} + \overline{w}_i) / \mathbb{Z}^n} \log(\lambda_w) = \mu(DT^n)\mathcal{I}_n(0) + \mathcal{H}^i(0), \quad \lambda_w = 4 \sum_{k=1}^n \sin^2(w_k\pi),\label{eq:DT-log-determinant}
		\end{align}
		where
		$$
		\mathcal{I}_n(s) = -\int_0^\infty (e^{-s^2t}e^{-2nt}I_0(2t)^n - e^{-t})\frac{dt}{t}
		$$
		depends only on $n$, and
		$$
		\mathcal{H}^i(s) = -\int_0^\infty (e^{-s^2t}(\theta_{DT^n}^i(t) - \mu(DT^n)e^{-2nt}I_0(2t)^n-1)+e^{-t})\frac{dt}{t}.
		$$
	\end{prop}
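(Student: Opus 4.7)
The proof parallels that of Proposition~\ref{prop:DT-log-determinant}, with adjustments to remove the zero eigenvalue contribution (present precisely when $\cl w_i=0$, so $\lambda_0=0$ lies in the original sum). The starting identity comes from the Gauss transform of the theta function: for $\Re(s^2)>0$,
$$
2s\int_0^\infty e^{-s^2t}\theta_{DT^n}^i(t)\,dt=\sum_{w\in(\Gamma^*+\cl w_i)/\Z^n}\frac{2s}{s^2+\lambda_w}.
$$
I would then isolate the $w=0$ mode: it contributes $1$ to $\theta_{DT^n}^i(t)$ and $2/s$ to the sum, so subtracting both sides yields
$$
2s\int_0^\infty e^{-s^2t}(\theta_{DT^n}^i(t)-1)\,dt=\sum_{w\in(\Gamma^*\setminus\{0\}+\cl w_i)/\Z^n}\frac{2s}{s^2+\lambda_w}.
$$
Adding and subtracting $\mu(DT^n)e^{-2nt}I_0(2t)^n$ on the left then splits it into two pieces matching the definitions of $\II_n$ and $\HH^i$. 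A direct differentiation using $\partial_s e^{-s^2t}=-2ste^{-s^2t}$ confirms that $\mu(DT^n)\II_n'(s)$ and $\HH^{i\prime}(s)$ are exactly these two pieces.

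Next, I would verify that $\II_n(s)$ and $\HH^i(s)$ are well-defined and holomorphic in a neighborhood of $s=0$. The near-zero behavior of the integrands is precisely the point of the compensating terms: Taylor expansion gives $e^{-2nt}I_0(2t)^n=1-2nt+O(t^2)$ and $\theta_{DT^n}^i(t)-\mu(DT^n)e^{-2nt}I_0(2t)^n-1=-1+O(t)$; the external $e^{-t}$ cancels the constant leading terms so that the full integrand is $O(1)$ at $t=0$ and hence integrable against $dt/t$. At $t\to\infty$ the theta piece decays exponentially (all $\lambda_w>0$ for $w\neq 0$) and the Bessel piece like $t^{-n/2}$, so convergence there is clear. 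Thus I would obtain a common-antiderivative identity
$$
f(s):=\sum_{w\in(\Gamma^*\setminus\{0\}+\cl w_i)/\Z^n}\log(s^2+\lambda_w)=\mu(DT^n)\II_n(s)+\HH^i(s)+c,
$$
and the constant $c$ would be fixed by asymptotic matching as $s\to\infty$: $f(s)=(\mu(DT^n)-1)\log s^2+o(1)$, $\II_n(s)=\log s^2+o(1)$ (as in the original proposition), and the key new point $\HH^i(s)=-\log s^2+o(1)$. These sum to $f(s)$ modulo $o(1)$, forcing $c=0$. Evaluating at $s=0$, where both sides are holomorphic, delivers the claimed identity.

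The main obstacle is pinning down $\HH^i(s)=-\log s^2+o(1)$ as $s\to\infty$. I would isolate the pair $(-e^{-s^2t}+e^{-t})$ inside $\HH^i$ and invoke the Frullani identity $\int_0^\infty \frac{e^{-t}-e^{-s^2t}}{t}\,dt=\log s^2$; the remainder, involving $e^{-s^2t}$ times the $O(t)$ bracket $\theta_{DT^n}^i(t)-\mu(DT^n)e^{-2nt}I_0(2t)^n$, vanishes as $s\to\infty$ by dominated convergence, since $e^{-s^2t}$ concentrates at $t=0$ while the bracketed factor is $O(t)$ there. This Frullani-type compensation is exactly what encodes the removed zero mode and explains the presence of the $-1$ and $e^{-t}$ terms in the definition of $\HH^i$.
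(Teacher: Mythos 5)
Your proof is correct and takes essentially the same route the paper intends (the Gauss transform identity, asymptotic matching of antiderivatives as $s\to\infty$, and evaluation at the holomorphic point $s=0$, following \cite{Converge2}); the Frullani computation giving $\HH^i(s)=-\log s^2+o(1)$ is precisely the ``affected asymptotic behavior'' the paper alludes to without spelling out. You also correctly identify the implicit hypothesis $\cl w_i = 0$, under which the removed mode has $\lambda = 0$ and the $t\to\infty$ tail of the $\HH^i(0)$ integrand is integrable (the constant $-1$ is cancelled by $\theta^i\to1$); this is the only regime in which the stated integral formula for $\HH^i(0)$ actually converges.
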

	
	\begin{proof}
		We omit the details here, as the proof follows the same approach as \cite[Proposition~3.4]{Converge2}. The primary distinction from Proposition~\ref{prop:DT-log-determinant} lies in the exclusion of an eigenvalue on the left-hand side of \eqref{eq:DT-log-determinant}, affecting the asymptotic behavior for $\HH^i(s)$.
		
	\end{proof}
	
	The remaining process is basically same as in Section~\ref{sec:main} and \cite{Converge2}. Finally we can derive
	\begin{align}
		\sum_{w \in (\Gamma^*_u \setminus \{0\}+\cl w_i(u)) / \mathbb{Z}^n} \log(\lambda_w) = \mu(DT^n_u)\mathcal{I}_n(0) - \zeta_i'(0) + \log u^2 + o(1)\label{eq:case3}
	\end{align}
	when $\cl w_i(u)=o(u^{-1}),$ which is equivalent to $\cl w_i=0.$ This formula is same to the case where $\cl w_i(u)\equiv 0$ as in \eqref{eq:case2}. Take \eqref{eq:case3} back to the proof of Theorem~\ref{thm:converge} in Section~\ref{sec:main}, and then we come to the conclusion.
	
	\textbf{Acknowledgements.} We would like to thank Professor Nicolai Reshetikhin of YMSC for offering such a fascinating topic. Professor Yong Lin is supported by NSFC, no.12071245.
	
\bibliographystyle{plain}
\bibliography{CL}

\end{document}